\theoremstyle{plain}
\newtheorem{thm}{Theorem}[section]
\newtheorem{cor}[thm]{Corollary}
\newtheorem{lem}[thm]{Lemma}
\newtheorem{prop}[thm]{Proposition}
\newtheorem{quest}[thm]{Question}
\theoremstyle{definition}
\newtheorem{defn}{Definition}
\newtheorem{rem}{Remark}
\numberwithin{equation}{section}
\newcommand{\A}{\mathcal{A}}
\newcommand{\cc}{C$^*_0$-polyhedron }
\newcommand{\B}{\mathbb{B}}
\newcommand{\K}{\mathbb{K}}
\newcommand{\M}{\mathbb{M}}
\newcommand{\C}{\mathbb{C}}
\begin{document}

\title{C*-convexity and C*-Polyhedron}
\author{Clayton Suguio Hida}
\address{Clayton Suguio Hida, Faculdade de Tecnologia de S\~ao Paulo - FATEC}
\email{clayton.hida@fatec.sp.gov.br}

\subjclass[2010]{46L05, 46L89, 47A05}
\begin{abstract}
A polyhedron in a Banach space is a family of points $\mathcal{X}$ such that for every $x\in \mathcal{X}$, there is a closed convex set $C$ such that $a\notin C$ and $\mathcal{X}\setminus\{x\}\subset  C$.  In this article, we consider the notion of C*-convexity and introduce the notion of a C*-polyhedron, which is a noncommutative version of the notion of polyhedrons. We investigate the largest size of a C*-polyhedron in some classical C*-algebras.

\smallskip
\noindent \textbf{Keywords.} C*-convexity; nonseparable C*-algebras; polyhedron
\end{abstract}

\maketitle
\tableofcontents

\section{Introduction}

The notion of a polyhedron in a Banach space was introduced in \cite{granero2003kunen}:
\begin{defn}Let $X$ be a Banach space. A family $(x_\alpha)_\alpha$ is a polyhedron if for every $\alpha<\kappa$, there is a closed convex set $C$ such that $a_\alpha \notin C$ and $a_\beta \in C$ for every $\alpha\neq \beta$.
\end{defn}

Since a polyhedron is a discrete set in the norm topology, it follows that the size of a polyhedron is bounded by the density of the Banach space. In particular, separable Banach spaces admit only countable polyhedrons (see Section 2 of \cite{granero2003kunen}). 

In the case of nonseparable Banach spaces, the problem of the existence of an uncountable polyhedron has a set-theoretic flavour: Assuming the Continuum hypothesis (CH), the Kunen line (see \cite{negrepontis}) gives a consistent example of a nonseparable Banach space of the form $C(K)$ without an uncountable polyhedron (see \cite{negrepontis}). On the other hand, assuming the Martin Maximum, S. Todorcevic \cite{quocientstevo} proved that every nonseparable Banach space has an uncountable biorthogonal system. Since every uncountable biorthogonal system generates an uncountable polyhedron (see Proposition \ref{ubabs_polyhedron}), it follows that it is consistent that every nonseparable Banach space has an uncountable polyhedron. In particular, the question of whether every nonseparable Banach space has an uncountable polyhedron is undecidable in ZFC.

In this article, we consider the following noncommutative versions of the notion of convex sets and absolutely convex sets in linear spaces: 

A subset $\mathcal{X}$ of a unital C*-algebra $\A$ is:
\begin{enumerate}
\item \textbf{C*-convex} if $\sum_{i=1}^n A_i^* x_i A_i \in \mathcal{X}$, whenever  $\{x_1, \cdots, x_n\}\subset \mathcal{X}$ and \\
$\{A_1, \cdots, A_n\}\subset \A$ with $\sum_{i=1}^n A_i^* A_i = 1$ .
\item \textbf{C*-absolutely convex}  if $\sum_{i=1}^n A_i^* x_i A_i \in \mathcal{X}$, whenever  $\{x_1, \cdots, x_n\}\subset \mathcal{X}$ and $\{A_1, \cdots, A_n\}\subset \A$ with $\sum_{i=1}^n A_i^* A_i \leq  1$ .
\end{enumerate}

We introduce the following noncommutative convex families:

\begin{defn} Let $\A$ be a unital C*-algebra. A family $(a_\alpha)_{\alpha<\kappa}$ is a C*-polyhedron (\cc) of size $\kappa$ if for every $\alpha<\kappa$, there is a closed C*-convex (C*-absolutely convex) set $C$ such that $a_\alpha \notin C$ and $a_\beta \in C$ for every $\alpha\neq \beta$.
\end{defn}

The main purpose of this paper is to investigate the existence of large C*-polyhedrons in nonseparable C*-algebras. In particular, we would like to see if the same set-theoretic aspects of polyhedron in nonseparable Banach spaces appear in the case of C*-polyhedrons and nonseparable C*-algebras. 

In Section \ref{preliminares}, we review some basic results and definitions of polyhedrons in Banach spaces and the notion of C*-convexity in C*-algebras.
In Section \ref{c-polyhedron}, we define and study some basic properties of C*-polyhedrons. The main result of this section is the fact that every C*-algebra has an infinite \cc(see Theorem \ref{c_infinity_poly}). In Section \ref{sufficient_necessary} we present certain sufficient and necessary conditions for the existence of a C*-polyhedron. In particular, we use the noncommutative Hanh-Banach version of B. Magajna (\cite{magajna2000c}) to associate every C*-polyhedron with a system of homomorphism (see Theorem \ref{th_polyhedron_c}).

Section \ref{sec_commutative} is dedicated to the existence of C*-polyhedron in commutative C*-algebras. We summarize our main findings in this section in the following theorem:

\begin{thm}Let $\A = C(K)$ be a unital commutative C*-algebra.
\begin{enumerate}
\item $\A$ has a \cc of size $\kappa$ if and only if $\A$ has a C*-polyhedron of size $\kappa$.
\item If $K$ is hereditarily Lindel\"of, then $\A = C(K)$ does not have an uncountable \cc of projections.
\item If $K$ is not hereditarily Lindel\"of, then 
\begin{enumerate}
\item Under CH, there is a nonmetrizable scattered compact Hausdorff space $K$, such that $C(K)$ is not hereditarily Lindeloff and $C(K)$ does not have an uncountable C*-polyhedron.
\item Under PFA, every $C(K)$ with $K$ nonmetrizable (0-dimensional) compact Hausdorff space has an uncountable noncommutative C*-polyhedron (of projection).
\end{enumerate}
\end{enumerate}

\end{thm}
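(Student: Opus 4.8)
The plan is to translate every assertion into combinatorics of clopen subsets of $K$ by means of a fiberwise description of closed C*-convex sets in the commutative setting. Since all elements and coefficients of $C(K)$ commute, a C*-convex combination $\sum_i A_i^* x_i A_i$ with $\sum_i A_i^*A_i = 1$ coincides with the pointwise combination $\sum_i |A_i|^2 x_i$, whose weights $|A_i(t)|^2$ are nonnegative and sum to $1$ at each $t \in K$; the constraint $\sum_i A_i^*A_i \le 1$ gives weights of pointwise sum at most $1$. From this I would extract three facts: a closed C*-convex set in $C(K)$ is fiberwise convex; a closed C*-absolutely convex set is in addition fiberwise $0$-containing, because taking $A_1 = 0$ shows $0$ lies in every nonempty C*-absolutely convex set; and, through the commutative form of the separation theorem behind Theorem \ref{th_polyhedron_c}, a point $a \notin C$ is separated from a closed C*-convex $C$ by a single evaluation, i.e.\ there are $t \in K$ and a closed convex $S \subseteq \C$ with $a(t) \notin S$ while $g(t) \in S$ for every $g \in C$. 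For a family of projections $(\chi_{U_\alpha})_\alpha$ this dictionary says that a separating set for $\chi_{U_\alpha}$ is witnessed by a point $t_\alpha$ at which the relevant clopen sets are sorted into $0$ and $1$.

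For statement (1), the implication from a \cc to a C*-polyhedron is immediate, since a C*-absolutely convex set is in particular C*-convex, so the very sets witnessing a \cc also witness a C*-polyhedron of the same size. For the converse I would use that C*-convexity is translation invariant in $C(K)$: if $C$ is closed C*-convex and $b \in C(K)$ then $\sum_i A_i^*(x_i - b)A_i = \sum_i A_i^* x_i A_i - b$ whenever $\sum_i A_i^*A_i = 1$, so $C - b$ is again closed C*-convex, and if $b \in C$ then $C - b$ contains $0$ and is therefore C*-absolutely convex. Given a C*-polyhedron $(a_\alpha)_{\alpha<\kappa}$ with witnesses $C^\alpha$, fix one index $\beta_0$; for every $\alpha \neq \beta_0$ we have $a_{\beta_0} \in C^\alpha$, so $C^\alpha - a_{\beta_0}$ is closed C*-absolutely convex, excludes $a_\alpha - a_{\beta_0}$, and contains $a_\beta - a_{\beta_0}$ for all $\beta \neq \alpha$. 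Hence $(a_\alpha - a_{\beta_0})_{\alpha \neq \beta_0}$ is a \cc of the same size as the original for infinite $\kappa$. The discarded index cannot be recovered, since $a_{\beta_0} - a_{\beta_0} = 0$ can never be separated by a set that must contain $0$.

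Statement (2) then follows by producing a discrete subspace. Let $(\chi_{U_\alpha})_{\alpha<\omega_1}$ be an uncountable \cc of projections, with separating sets $C^\alpha$ and witnessing points $t_\alpha$ supplied by the dictionary. As $C^\alpha$ is C*-absolutely convex the associated fiber $S$ contains $0$, so the excluded value $\chi_{U_\alpha}(t_\alpha)$ must be $1$ and $t_\alpha \in U_\alpha$; and since $\chi_{U_\beta}(t_\alpha) \in S$ for $\beta \neq \alpha$ while $1 \notin S$, we get $\chi_{U_\beta}(t_\alpha) = 0$, that is $t_\alpha \notin U_\beta$. Thus each clopen $U_\alpha$ isolates $t_\alpha$ from all other witnesses, so $\{t_\alpha : \alpha<\omega_1\}$ is an uncountable discrete subspace of $K$, contradicting the assumption that $K$ is hereditarily Lindel\"of, since such spaces have countable spread.

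Statement (3) carries the set-theoretic content. For (3b), a nonmetrizable $0$-dimensional compact $K$ has uncountable weight, so $C(K)$ is nonseparable, and under PFA the theorem of Todorcevic invoked in the introduction through Proposition \ref{ubabs_polyhedron} yields an uncountable biorthogonal family; using $0$-dimensionality to realize it by clopen sets $U_\alpha$ with separating points $t_\alpha$ satisfying $t_\alpha \in U_\alpha$ and $t_\alpha \notin U_\beta$ for $\beta \neq \alpha$, the closed C*-convex sets $C^\alpha = \{f : f(t_\alpha) = 0\}$ exclude $\chi_{U_\alpha}$ and contain every $\chi_{U_\beta}$, giving an uncountable C*-polyhedron of projections. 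The hardest part of the whole theorem is (3a): under CH I would build $K$ by a Kunen-line style recursion of length $\omega_1$, keeping it scattered, $0$-dimensional, of weight $\omega_1$ (hence nonmetrizable and not hereditarily Lindel\"of) while using the continuum hypothesis to enumerate all candidate uncountable families of clopen sets together with their potential evaluation-witnesses and, at each stage, adjoining points so as to break the biorthogonality of the listed candidate. The crux is to verify that this diagonalization annihilates every potential uncountable C*-polyhedron — which, by the dictionary of the first paragraph, is exactly every potential uncountable biorthogonal family of clopen sets — without compromising scatteredness or the weight of the limit space.
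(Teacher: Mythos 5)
Your parts (1) and (2) are essentially correct. For (1) you use exactly the paper's mechanism (Lemma \ref{convex_ck_linear_comb} and Proposition \ref{equivalent_ck}): in $C(K)$ commutativity makes C*-convex combinations translation-invariant, so subtracting one fixed member of a C*-polyhedron produces a \cc of the same infinite size. For (2) you take a genuinely different route from the paper: Proposition \ref{nonexistence_projections_ck} argues by extracting a countable subcover of $\bigcup_\alpha O_\alpha$ and using compactness to write one projection as a combination of finitely many others, whereas you use your ``dictionary'' to produce points $t_\alpha$ with $t_\alpha\in U_\alpha$ and $t_\alpha\notin U_\beta$ ($\beta\neq\alpha$), i.e.\ an uncountable discrete subspace, contradicting the countable spread of hereditarily Lindel\"of spaces. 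This is clean and arguably sharper, but be aware that it rests entirely on the unproved claim that a function outside a closed C*-convex subset of $C(K)$ is separated from it by a \emph{single evaluation}. That claim is true, but it is the real content of your argument and does not follow merely from ``coefficients commute''; it needs a partition-of-unity argument (cover $K$ by finitely many neighborhoods on which members of $C$ approximate the prescribed fiber values and patch with $A_i=\sqrt{\varphi_i}$, $\sum_i\varphi_i=1$). You should prove this as a lemma.

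Part (3a) is where the proposal breaks down. First, it is only a plan: the Kunen-style recursion and the verification that the diagonalization works are exactly what you defer (``the crux is to verify\dots''), so nothing is established. Second, the reduction the plan relies on is false: an uncountable C*-polyhedron in $C(K)$ is \emph{not} the same thing as an uncountable biorthogonal family of clopen sets. A C*-polyhedron consists of arbitrary continuous functions, and your own dictionary converts it only into points $t_\alpha$ and closed convex sets $S_\alpha\subset\C$ separating values --- not into clopen sets. A diagonalization against families of clopen sets would at best kill C*-polyhedra of \emph{projections}, leaving general C*-polyhedra untouched. You also missed the one-line reduction the paper uses: every C*-polyhedron is in particular a polyhedron (C*-convex sets are convex), and Kunen's space under CH is already known to admit no uncountable polyhedron \cite{negrepontis, granero2003kunen}, so no new construction is needed at all.

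Part (3b) has a similar genuine gap. You invoke Todorcevic's biorthogonal-system theorem, which the paper states under Martin's Maximum, not PFA; and even granting it, the step ``using $0$-dimensionality to realize the biorthogonal family by clopen sets $U_\alpha$ with points $t_\alpha\in U_\alpha$, $t_\alpha\notin U_\beta$'' is unjustified and in general impossible: in a biorthogonal system $(x_\alpha,\mu_\alpha)$ of $C(K)$ the functionals are measures, not point evaluations, and the $x_\alpha$ need not be projections. The double arrow space (Remark \ref{double_arrow}) is compact, $0$-dimensional, nonmetrizable, and carries an uncountable biorthogonal system of projections, yet it has no uncountable discrete subspace, so the configuration you want cannot be extracted from it; indeed, by your own argument in (2), its $C(K)$ has no uncountable C*-polyhedron of projections. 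Note that your argument never uses the standing hypothesis of (3) that $K$ fails to be hereditarily Lindel\"of --- applied to the double arrow space it would prove a false statement, which confirms the gap. The correct route, which is the paper's (Theorem \ref{main_ck}), is purely topological: under PFA every compact non--hereditarily Lindel\"of space contains an uncountable discrete subspace \cite[Theorem 8.9]{todorcevic1989partition}; shrinking the witnessing open sets to clopen sets gives a UBABS of type $0$ made of projections, hence a \cc by Lemma \ref{existence_ubabs_commu} and Corollary \ref{discrete_ck_poly}.
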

\begin{proof}
See Proposition \ref{equivalent_ck} and Theorem \ref{main_ck}.
\end{proof}

In the last section, we investigate the existence of uncountable C*-polyhedrons in the classical C*-algebras of compact operators $\K(H)$ and in $\B(H)$. We prove the following result:

\begin{thm} $ $
\begin{enumerate}
\item Let $H$ be a Hilbert space. Then $\K(H)$ does not have an uncountable \cc.
\item The C*-algebra $\B(\ell_2)$ does not have an uncountable C*-polyhedron of compact operators and it does not have an uncountable \cc of self-adjoint elements.
\end{enumerate}
\end{thm}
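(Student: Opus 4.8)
The common starting point for all parts is the observation that every closed C*-absolutely convex (respectively, C*-convex) set is a closed convex set containing the origin: taking $A_1=\sqrt{\lambda}\,1$ and $A_2=\sqrt{1-\lambda}\,1$ shows stability under ordinary convex combinations. Hence every \cc and every C*-polyhedron is, in particular, a polyhedron in the Banach-space sense, so by the results recalled from \cite{granero2003kunen} it is a norm-discrete family whose size is bounded by the density character of the ambient algebra. This already settles item (2)(a): a C*-polyhedron of compact operators is a discrete subset of the \emph{separable} algebra $\K(\ell_2)$, hence countable. The remaining work is to remove the separability hypothesis in item (1) and to handle the genuinely nonseparable self-adjoint part of $\B(\ell_2)$ in item (2)(b).

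For item (1), suppose toward a contradiction that $(a_\alpha)_{\alpha<\omega_1}$ is a \cc in $\K(H)$. Using the separation of Theorem \ref{th_polyhedron_c} together with the fact that every nondegenerate representation of $\K(H)$ is an amplification of the identity representation, I would produce for each $\alpha$ a contraction $V_\alpha\colon K_\alpha\to H\otimes\ell_2$ with $\|V_\alpha^*(a_\alpha\otimes 1)V_\alpha\|>1\ge\|V_\alpha^*(a_\beta\otimes 1)V_\alpha\|$ for every $\beta\neq\alpha$. The decisive use of compactness is a localization: splitting $a_\alpha=b_\alpha+c_\alpha$ via the singular value decomposition, where $b_\alpha$ is the finite-rank part carrying the singular values $>1$ and $\|c_\alpha\|\le 1$, only finitely many singular values exceed $1$, so the subspace $H_\alpha\subset H$ on which the strict inequality is realized is \emph{finite-dimensional}. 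I would then pass to an uncountable $I\subseteq\omega_1$ on which $\dim H_\alpha$, $\|a_\alpha\|$, a uniform margin $\|V_\alpha^*(a_\alpha\otimes 1)V_\alpha\|\ge 1+\varepsilon_0$, and the finite multiset of singular values of $b_\alpha$ are fixed up to $<\varepsilon_0$ (this is possible because that multiset ranges over the \emph{separable} space $\bigcup_d\mathbb{R}^d$). The goal is to exploit this uniformity, as in the rank-one computation showing that conjugation by a partial isometry already moves one top part onto another, to place $a_\beta$ into $\overline{\mathrm{absconv}_{C^*}}(\{a_\gamma:\gamma\neq\beta\})$ for a suitable pair. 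The main obstacle is exactly this matching step: the separating contraction $V_\beta$ is tuned to $\beta$, and a conjugate $A^*a_\alpha A$ reproduces the top part $b_\beta$ but not the low part $c_\beta$; making the contradiction precise requires showing that the vectors achieving the margin for $\beta$ have their essential mass in $H_\beta\otimes(\text{finite})$, so that the entire configuration is captured by separable data to which a pressing-down argument on $\omega_1$ applies.

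For item (2)(b) compactness is unavailable, so the finite-dimensional localization above breaks down, and I would instead exploit self-adjointness to upgrade Magajna's separation to a \emph{normal} one. After passing to an uncountable subfamily contained in a fixed ball (possible since $\omega_1=\bigcup_n\{\alpha:\|a_\alpha\|\le n\}$), the plan is to show that a self-adjoint element $a$ lying outside a norm-closed C*-absolutely convex set $C$ can be separated by a \emph{weak*-closed} C*-absolutely convex set, equivalently by a normal completely contractive completely positive map $\phi\colon\B(\ell_2)\to\B(K)$, with $K$ separable, satisfying $\|\phi(a)\|>1\ge\sup_{x\in C}\|\phi(x)\|$. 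Granting this, the separating sets $C_\alpha$ may be taken weak*-closed, so $(a_\alpha)$ is \emph{weak*-discrete}; since the ball of $\B(\ell_2)$ is weak*-metrizable and weak*-separable (the predual of $\B(\ell_2)$, the trace-class operators, is separable), a weak*-discrete subset of it is countable, yielding the contradiction.

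The hard part will be establishing the normality of the separating functional in (2)(b). One must argue that, for a self-adjoint $a$ with $\|a\|>1$, the spectral subspace on which $|a|>1$ supplies an approximate-eigenvector witness that can be promoted to an operator-valued, weak*-continuous separator compatible with C*-absolute convexity, thereby bypassing the representation theory of $\B(\ell_2)$, which (unlike that of $\K(H)$) involves non-normal Calkin-type representations. It is precisely here that self-adjointness is essential, and I expect this normalization of Magajna's separation to be the technical core of the argument.
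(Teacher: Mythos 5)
Your argument for the first half of item (2) is correct, and it is in fact a different (and for $\ell_2$ simpler) route than the paper's: the paper reduces to the \cc case via \cite[Lemma 6]{loebl1981some} (every C*-convex subset of $\B(H)$ containing a compact operator contains $0$) and then quotes item (1) (see Proposition \ref{bh_non_compact}), whereas you only use that a C*-polyhedron is norm-discrete and that $\K(\ell_2)$ is norm-separable. (Minor sloppiness: a C*-convex set need not contain the origin; only the C*-absolutely convex ones do. You never use this, since discreteness only needs ordinary convexity.) The two substantial items, however --- item (1) and the self-adjoint half of item (2) --- are left with genuine gaps, which you yourself flag. For (1), the obstacle you name (matching the separating data tuned to $\beta$ against a conjugate of $a_\alpha$) is real, and the machinery you assemble around it --- Magajna separation as in Theorem \ref{th_polyhedron_c}, amplified representations, the splitting at singular value $1$, a pressing-down argument --- does not close it. The paper's proof of Theorem \ref{non_exist_compact} needs none of this. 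By Proposition \ref{polyhedron_finite_dimension} (i.e.\ Lemma \ref{replace_elements} plus norm-density of finite-rank operators) one may assume every $a_\alpha$ has rank exactly $d$, written $a_\alpha=\sum_{j=1}^d\lambda_{\alpha,j}\,f_{\alpha,j}\otimes f_{\alpha,j}$. The decisive point you are missing is that C*-absolute convexity permits conjugation by the partial isometries $T_n=\sum_{j=1}^d f_{\alpha,j}\otimes f_{\beta_n,j}$, which satisfy $T_n^*T_n\le 1$ and give $T_n^*a_{\beta_n}T_n=\sum_{j=1}^d\lambda_{\beta_n,j}\,f_{\alpha,j}\otimes f_{\alpha,j}$: the eigenvector data is erased completely, and each $a_\alpha$ is reduced to its eigenvalue tuple in $\C^d$. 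Since $\C^d$ is separable, an uncountable family of tuples cannot be discrete, so some $(\lambda_{\alpha,j})_j$ is a limit of tuples $(\lambda_{\beta_n,j})_j$ with $\beta_n\neq\alpha$, whence $T_n^*a_{\beta_n}T_n\to a_\alpha$ in norm and $a_\alpha\in MCL(\{a_\beta:\beta\neq\alpha\}\cup\{0\})$, a contradiction. No separation theorem and no pressing-down argument is needed.

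For the self-adjoint part of item (2), your plan hinges on upgrading Magajna's separation to a separation by \emph{weak*-closed} C*-absolutely convex sets, from which weak*-discreteness and weak*-separability of the ball of $\B(\ell_2)$ would finish. You correctly identify this upgrade as the technical core, but you do not prove it, and there is no reason it should hold: the weak*-closed C*-absolutely convex hull of $\{a_\beta:\beta\neq\alpha\}\cup\{0\}$ can be strictly larger than the norm-closed one (bounded weak*-closures in $\B(\ell_2)$ pick up many new points), so the hypothesis that $a_\alpha$ avoids the norm-closed hull gives no control on weak*-closed separating sets. The paper's Proposition \ref{bh_non_existence} bypasses separation theory entirely, by the same mechanism as in (1): self-adjoint operators with finite spectrum are norm-dense among self-adjoint operators, so Lemma \ref{replace_elements} lets one assume $a_\alpha=\sum_{i=1}^d\lambda_{\alpha,i}P_{\alpha,i}$ with pairwise orthogonal projections, where a counting argument fixes $d$ and the dimensions $\dim(Range(P_{\alpha,i}))$ across $\alpha$; then partial isometries $A_i$ carrying $Range(P_{\alpha,i})$ onto $Range(P_{\beta_n,i})$, summed to $A=\sum_i A_i$ with $A^*A\le 1$, again collapse everything to the eigenvalue tuples in $\mathbb{R}^d\subset\C^d$, and the same countability argument concludes. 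In short, the single idea missing from your proposal in both (1) and (2)(b) is that the perturbation Lemma \ref{replace_elements} combined with conjugation by spectral-data-matching partial isometries (legitimate precisely because coefficients with $\sum_i A_i^*A_i\le 1$ are allowed) reduces the whole problem to a counting argument in a finite-dimensional, hence separable, coefficient space; this is exactly where C*-absolute convexity is strictly stronger than convexity, and it makes all of the separation-theoretic machinery unnecessary.
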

\begin{proof}
See Theorem \ref{non_exist_compact}, Proposition \ref{bh_non_compact} and Theorem \ref{bh_non_existence}.
\end{proof}

Notation should be standard. 
Let $\A$ be a C*-algebra. We denote by $d(\A)$ the topological density of $\A$ in the norm topology. Given $A,B\subset \A$, de distance between $A$ and $B$ will be denoted by $dist(A, B)$. 
See  \cite{kenneth1996c}, \cite{dixmier1977c}, \cite{li1992introduction},  \cite{murphy1990c} and \cite{pedersen1979c} for other definitions and results about C*-algebras.

\section{Preliminares}\label{preliminares}
In this section, we review some basic definitions and results on polyhedrons and C*-convexity that will be used in the rest of the article. 

\subsection{Polyhedrons}
The notion of a polyhedron in a Banach space\footnote{In this article, all Banach spaces are assumed to be Banach spaces over the complex numbers} was introduced in \cite{granero2003kunen}:
\begin{defn}Let $X$ be a Banach space. A family $(x_\alpha)_\alpha$ is a polyhedron if for every $\alpha<\kappa$, there is a closed convex set $C$ such that $a_\alpha \notin C$ and $a_\beta \in C$ for every $\alpha\neq \beta$.
\end{defn}

Every polyhedron in a Banach space $X$ is a discrete set in the norm topology. In particular, the size of a polyhedron is bounded above by the density $d(X)$.  Let us prove that every Banach space has an infinite polyhedron. 

\begin{lem}\label{c_infinity_poly}The one-dimensional Banach space $\mathbb{C}$ has an infinity polyhedron.
\end{lem}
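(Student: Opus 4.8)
The plan is to exhibit an explicit countable family of points lying in convex position and to separate each one from the rest by a closed half-plane, exploiting the fact that $\C$ is, as a real Banach space, just $\mathbb{R}^2$, where every closed convex set is separated from an exterior point by a real-affine hyperplane (Hahn--Banach).

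First I would identify $\C$ with $\mathbb{R}^2$ and record the reformulation that a family $(x_n)_n$ is a polyhedron precisely when, for every $n$, the point $x_n$ fails to lie in the closed convex hull of $\{x_m : m \neq n\}$; equivalently, each $x_n$ can be strictly separated from the remaining points by a real-linear functional. The governing idea is therefore to place the points on a strictly convex curve, so that each one is an exposed point of the convex hull of the whole family and hence admits such a strict separation.

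Concretely, I would take $x_n = e^{i/n}$ for $n \geq 1$, which are pairwise distinct and lie on the unit circle. For fixed $n$, let $\phi_n$ be the real-linear functional whose supporting line to the closed unit disk meets it only at $x_n$, normalized so that $\phi_n(x_n) = 1$ and $\phi_n(z) < 1$ for every other point $z$ of the disk. I would then set $C_n = \overline{\operatorname{conv}}\{x_m : m \neq n\}$, which is closed, convex, and contains every $x_m$ with $m \neq n$; the only thing left is to verify $x_n \notin C_n$.

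The main obstacle is exactly this last verification: since $C_n$ must be closed yet still avoid $x_n$, one must rule out the possibility that the other points accumulate at $x_n$ and drag it into the closure. This is settled by the choice of sequence, whose sole accumulation point is $1 = e^{i0}$, which is none of the $x_n$. Thus $\{x_m : m \neq n\} \cup \{1\}$ is compact and misses $x_n$, so $\phi_n$ attains a maximum $c_n < 1$ on it; consequently $\phi_n \leq c_n < 1 = \phi_n(x_n)$ on all of $C_n$, whence $x_n \notin C_n$, as needed. An alternative that avoids accumulation points entirely is to use the integer points $(n, n^2)$ on the parabola $y = x^2$ together with the explicit closed half-planes $C_n = \{(x,y) : y - 2nx \geq 1 - n^2\}$, since $m^2 - 2nm = (m-n)^2 - n^2$ is $-n^2$ at $m=n$ and at least $1 - n^2$ for every integer $m \neq n$.
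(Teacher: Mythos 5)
Your proposal is correct and takes essentially the same approach as the paper: both place the points on the unit circle so that their unique accumulation point ($1$ in your construction, $i$ in the paper's, which uses angles $\bigl(\sum_{j=1}^n 2^{-j}\bigr)\frac{\pi}{2}$) is not itself a member of the family, and both exploit strict convexity of the circle to separate each point from the closed convex hull of the others. Your verification via supporting functionals and compactness is somewhat more careful than the paper's brief distance-to-chord assertion (and your parabola alternative is a valid variant), but the underlying idea is identical.
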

\begin{proof}
Define $\lambda_0 = (1,0)$ and for each $n\geq 1$, define $\lambda_n = e^{i((\sum_{j=1}^n\frac{1}{2^j}) \frac{\pi}{2})}$. Observe that for each $n\in \mathbb{N}$, the distance between $\lambda_n$ and $co(\{\lambda_0, \cdots, \lambda_{n-1}, \lambda_{n+1}, \cdots\}\cup\{0\})$ is equal to the distance of $\lambda_n$ and the line segment $\overline{\lambda_{n-1}\lambda_{n+1}}$, which is positive. In particular, $\lambda_n \notin \overline{co}(\{\lambda_0, \cdots, \lambda_{n-1}, \lambda_{n+1}, \cdots\}\cup\{0\})$.
\end{proof}

\begin{prop}\label{infinite_polyhedron}Every Banach space has an infinite polyhedron.
\end{prop}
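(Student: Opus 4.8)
The plan is to reduce the general statement to the one-dimensional case already handled in Lemma~\ref{c_infinity_poly}. The key observation is that any Banach space $X$ over $\mathbb{C}$ is nontrivial (assuming $X\neq\{0\}$), so it contains a one-dimensional subspace isometric to $\mathbb{C}$, and I would like to transport the infinite polyhedron $(\lambda_n)_n$ constructed there into $X$. Concretely, first I would fix any norm-one vector $e\in X$ and consider the points $x_n=\lambda_n e$ for $n\in\mathbb{N}$, where $(\lambda_n)_n\subset\mathbb{C}$ is the sequence from the lemma. The claim is that $(x_n)_n$ is an infinite polyhedron in $X$.

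To verify this, I would produce for each $n$ a closed convex set $C_n\subset X$ with $x_n\notin C_n$ and $x_m\in C_n$ for all $m\neq n$. The lemma gives a positive distance $\delta_n=\mathrm{dist}(\lambda_n,\overline{co}(\{\lambda_m:m\neq n\}))>0$ in $\mathbb{C}$; I would use the Hahn--Banach separation theorem in $\mathbb{C}$ to obtain a closed (real-)halfplane $H_n\subset\mathbb{C}$ that is convex, contains $\{\lambda_m:m\neq n\}$, and excludes $\lambda_n$. The natural candidate is then to pull this halfplane back along a suitable functional on $X$. Choose a norm-one functional $f\in X^*$ with $f(e)=1$ (which exists by Hahn--Banach applied to the subspace $\mathbb{C}e$), so that $f(x_m)=\lambda_m$ for every $m$; then setting $C_n=f^{-1}(H_n)$ gives a closed convex subset of $X$, since $f$ is continuous and linear and $H_n$ is closed and convex. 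By construction $f(x_m)=\lambda_m\in H_n$ for $m\neq n$ while $f(x_n)=\lambda_n\notin H_n$, which is exactly the polyhedron condition.

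The only subtlety, and the step I expect to require the most care, is making sure the separating sets in $\mathbb{C}$ are genuinely convex over the complex field rather than merely over the reals: a polyhedron uses \emph{convex} sets, and a closed real halfplane in $\mathbb{C}\cong\mathbb{R}^2$ is convex as a subset, so this is fine, but one must not conflate $\mathbb{C}$-linear structure with real convexity. Since the definition of polyhedron only asks for the existence of a closed convex set (in the real-linear sense underlying the Banach space) separating the distinguished point, a real-affine halfplane suffices and no complex linearity of the separating functional is needed. I would therefore phrase the separation step purely in terms of the real-linear topological vector space structure, which removes any ambiguity.

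An alternative, cleaner packaging avoids even choosing $f$ explicitly: since $\mathbb{C}e$ is a one-dimensional (hence closed) subspace of $X$ and the lemma exhibits a polyhedron in $\mathbb{C}e$, it suffices to observe that a polyhedron in a closed subspace $Y\subseteq X$ remains a polyhedron in $X$. Indeed, given a closed convex $C\subset Y$ separating $x_n$ from the rest, the set $C+(\text{the complementary closed subspace})$ — or more robustly, the preimage of $C$ under a norm-one projection-type functional — yields the required closed convex set in $X$. I would present the explicit functional construction as the main argument for transparency and mention this subspace viewpoint as the conceptual reason it works.
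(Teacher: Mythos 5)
Your proof is correct, and it rests on the same basic reduction as the paper: both transport the infinite polyhedron $(\lambda_n)_n$ of Lemma~\ref{c_infinity_poly} into $X$ via a one-dimensional copy of $\mathbb{C}$. The difference is the transfer mechanism. The paper's proof is a one-line observation that polyhedra are preserved by superspaces: the separating sets $\overline{co}(\{\lambda_m : m\neq n\}\cup\{0\})$ already live inside the closed subspace $\mathbb{C}e\subset X$, hence are themselves closed convex subsets of $X$ and witness the polyhedron property verbatim; no new sets and no functionals are needed. You instead build the separating sets in $X$ as preimages $C_n=f^{-1}(H_n)$, which costs two applications of Hahn--Banach (extending the coordinate functional to a norm-one $f\in X^*$, and separating $\lambda_n$ from the closed convex hull by a half-plane $H_n$ in $\mathbb{C}\cong\mathbb{R}^2$). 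This is more machinery than strictly necessary, but it is sound --- your point that only real-affine convexity is relevant is correct, since the definition of polyhedron involves convexity in the real-linear sense --- and it buys something the paper's argument leaves implicit: the separation is witnessed by functionals, so your construction essentially produces a UBABS-type family in the spirit of Proposition~\ref{ubabs_polyhedron}. Note finally that your closing ``alternative packaging'' (a polyhedron in a closed subspace remains a polyhedron in the superspace) \emph{is} the paper's entire proof, and there the set $C$ itself already works as a subset of $X$; you do not need to fatten it by adding a complementary subspace, whose existence here anyway depends on the same functional $f$ (via $\ker f$).
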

\begin{proof}
Observe that the notion of polyhedron is preserved by superspaces, i.e., if $Y$ is a Banach subspace of a Banach space $X$, then a polyhedron in $Y$ is also a polyhedron in $X$. From Lemma \ref{c_infinity_poly} we conclude that every Banach space has an infinite polyhedron.
\end{proof}

Recall that an absolutely convex set in a vector space $X$ is a subset $K\subset X$ such that $\sum_{i=1}^n \lambda_i x_i \in K$ whenever $x_1, \cdots, x_n\in K$ and $\sum_{i=1}^n|\lambda_i|\leq 1$. 

\begin{defn}Let $X$ be a Banach space. A family $(x_\alpha)_\alpha$ is a 0-polyhedron if for every $\alpha<\kappa$, there is an absolutely closed convex set $C$ such that $a_\alpha \notin C$ and $a_\beta \in C$ for every $\alpha\neq \beta$.
\end{defn}
Since every absolutely convex set is, in particular,  a convex set, it follows that every 0-polyhedron is, in particular,  a polyhedron.
Moreover,  if $(a_\alpha)_{\alpha<\omega_1}$ is a polyhedron in a Banach space $X$, then it is easy to see that $(a_\alpha - a_{0})_{0<\alpha<\omega_1}$ is a 0-polyhedron. 

In particular, a Banach space $X$ admits an uncountable polyhedron if and only if $X$ admits an uncountable 0-polyhedron. 

The existence of uncountable polyhedrons in a Banach space is associated with the existence of other structures (e.g convex right-separated family, $\omega$-independent family, see \cite{granero2003kunen} and \cite[Section 8.1]{hajek2007biorthogonal}).  We will be interested in the following:

\begin{defn}Let $X$ be a Banach space. A bounded family $(x_\alpha, f_\alpha)_{\alpha<\kappa}\subset X\times X^*$ is said to be an uncountable bounded almost biorthogonal system  (UBABS) if there is a real number $0\leq \eta<1$ such that $f_\alpha(x_\alpha)=1$ and $f_\alpha(x_\beta)\leq \eta$ if $\alpha\neq \beta$.
\end{defn}

\begin{prop}\label{ubabs_polyhedron}Let $X$ be a Banach space. Then $X$ has an uncountable polyhedron if and only if $X$ has an uncountable UBABS.
\end{prop}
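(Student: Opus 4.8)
The plan is to prove both directions of the equivalence, using the observation that an uncountable polyhedron exists iff an uncountable $0$-polyhedron exists (established just before the statement), so I can work with $0$-polyhedrons, i.e.\ with absolutely convex separating sets, whenever that is more convenient.

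For the easier direction, I would first show that a UBABS yields an uncountable polyhedron. Suppose $(x_\alpha, f_\alpha)_{\alpha<\omega_1}$ is a UBABS with constant $0\le\eta<1$. For each fixed $\alpha$, I would consider the closed convex (indeed closed half-space–type) set
\[
C_\alpha = \{x\in X : \operatorname{Re} f_\alpha(x)\le \eta\}.
\]
This set is closed and convex. Since $f_\alpha(x_\beta)\le\eta$ for all $\beta\neq\alpha$ (interpreting the bound on the real part, or after passing to real parts), every $x_\beta$ with $\beta\neq\alpha$ lies in $C_\alpha$, while $f_\alpha(x_\alpha)=1>\eta$ forces $x_\alpha\notin C_\alpha$. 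Hence $(x_\alpha)_{\alpha<\omega_1}$ is an uncountable polyhedron. The only care needed is the real/complex bookkeeping, since the $f_\alpha$ are complex functionals; taking real parts turns each $f_\alpha$ into a real-linear functional and the half-spaces remain closed and convex, so this is routine.

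For the converse, I would start from an uncountable polyhedron and, using the equivalence recalled in the text, assume without loss of generality it is an uncountable $0$-polyhedron $(x_\alpha)_{\alpha<\omega_1}$, so for each $\alpha$ there is a closed absolutely convex set $C_\alpha$ with $x_\alpha\notin C_\alpha$ and $x_\beta\in C_\alpha$ for $\beta\neq\alpha$. By the Hahn--Banach separation theorem applied to the point $x_\alpha$ and the closed absolutely convex set $C_\alpha$, I can find a functional $g_\alpha\in X^*$ and a real number $s_\alpha$ with $\operatorname{Re} g_\alpha(x_\alpha) > s_\alpha \ge \sup_{y\in C_\alpha}\operatorname{Re} g_\alpha(y)$; because $C_\alpha$ is absolutely convex and contains $0$, one gets in fact $\sup_{y\in C_\alpha}|g_\alpha(y)|\le s_\alpha < \operatorname{Re} g_\alpha(x_\alpha)$. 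Normalizing, I would set $f_\alpha = g_\alpha / g_\alpha(x_\alpha)$ so that $f_\alpha(x_\alpha)=1$, and then $|f_\alpha(x_\beta)|\le \eta_\alpha$ for all $\beta\neq\alpha$, where $\eta_\alpha = s_\alpha/\operatorname{Re} g_\alpha(x_\alpha) < 1$.

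The main obstacle is producing a \emph{uniform} bound $\eta<1$ valid for all $\alpha$ simultaneously, as the definition of UBABS requires a single $\eta$, whereas the separation argument only gives a possibly $\alpha$-dependent $\eta_\alpha<1$. I would handle this by a pigeonhole/uncountability argument: the bounds $\eta_\alpha$ lie in $[0,1)$, so for some $n$ the set $\{\alpha : \eta_\alpha \le 1 - 1/n\}$ is uncountable (otherwise $\omega_1$ would be a countable union of countable sets). Restricting to this uncountable index set and setting $\eta = 1-1/n$ yields an uncountable UBABS. I would also need to ensure boundedness of the family $(x_\alpha, f_\alpha)$; boundedness of $(x_\alpha)$ can be arranged by noting that a polyhedron may be assumed bounded (or by intersecting with a ball), and boundedness of $(f_\alpha)$ follows after a further pigeonhole refinement on the norms $\|f_\alpha\|$, again using that $\omega_1$ cannot be covered by countably many countable sets. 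Combining the restricted index set from the bound $\eta$ with the one from the norm bound gives the required uncountable bounded almost biorthogonal system.
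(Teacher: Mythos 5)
Your proof is correct. The paper does not actually prove this proposition itself---it defers to \cite[Proposition 2.2]{granero2003kunen}---and your argument (closed half-spaces $\{x : \operatorname{Re}f_\alpha(x)\le\eta\}$ for the UBABS-to-polyhedron direction; reduction to a $0$-polyhedron, Hahn--Banach separation of $x_\alpha$ from the closed absolutely convex set $C_\alpha$, normalization, and a pigeonhole on $\eta_\alpha$ and on the norms using the uncountable cofinality of $\omega_1$ for the converse) is essentially the standard argument behind that citation, so it correctly supplies the proof the paper omits.
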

\begin{proof}
See \cite[Proposition 2.2]{granero2003kunen}
\end{proof}

The reader is referred to \cite{granero2003kunen} and \cite[Section 8.1]{hajek2007biorthogonal} for other results and definitions on polyhedrons.
% 12, 13, 17, 19, 20
\subsection{C*-convexity.}

The notion of C*-convexity (also known as matrix-convexity in matrix algebras) was formally introduced and studied in \cite{loebl1981some}. It is a noncommutative version of the notion of convex sets in linear spaces and has been the subject of many studies, where some notions related to convexity (e.g., extreme points, noncommutative Hahn-Banach versions, Choquet theory) have been generalized in the noncommutative context. The reader is referred to \cite{davidson2015choquet}, \cite{davidson2019noncommutative}, \cite{effros1997matrix}, \cite{hopenwasser1981c}, \cite{kleski2014boundary}, \cite{magajna2000c}  for further results and applications of C*-convexity.

\begin{defn}[\cite{loebl1981some}]
Let $\A$ be a unital C*-algebra. A set $\mathcal{X}\subset \A$ is C*-convex if  $\sum_{i=1}^n A_i^* x_i A_i \in \mathcal{X}$, whenever  $\{x_1, \cdots, x_n\}\subset \mathcal{X}$ and $\{A_1, \cdots, A_n\}\subset \A$ with $\sum_{i=1}^n A_i^* A_i = 1$ .
\end{defn}

For instance, the set of positive elements $P = \{T: 0\leq T\leq 1\}$ is a C*-convex set in any C*-algebra (see \cite[Example 1]{loebl1981some}). Another important class of examples comes from matricial ranges: for every operator $T\in \B(H)$, the n-th matricial range $W^n(T)$ is a C*-convex set (see \cite{narcowich1981toeplitz}).

If $\mathcal{X}$ is C*-convex, it is convex in the usual sense. In general, the reverse implication does not hold:

\begin{rem}\label{remark1}Let $\mathcal{X}\subset \M_2(\C)$ be the single point set $\mathcal{X} = \{A\}$ where 
$A = \begin{pmatrix}
		2 & 1\\
		1 & 1
	 \end{pmatrix}$.
Since $\mathcal{X}$ is a one-point set, it is convex. On the other hand, if $U = \frac{1}{\sqrt{2}}\begin{pmatrix}
							1 & -1\\
							1 & 1
	 						\end{pmatrix}$, 
then $U$ is unitary and $U^* A U = \frac{1}{\sqrt{2}}\begin{pmatrix}
														5 & -1\\
														1 & -1
	 													\end{pmatrix} \notin \mathcal{X}$.
This shows that $\mathcal{X}$ is not C*-convex, although it is convex in the usual sense.	 																		
\end{rem}

\begin{lem}Let $\A$ be an unital C*-algebra and $\mathcal{X}\subset \A$:
\begin{enumerate}
\item If $\mathcal{X}$ is C*-convex and $A\in \mathcal{X}$, then $\{V^* A V: V^*V =1\}\subset \mathcal{X}$. In particular, if $\mathcal{X}$ is C*-convex, and $L$ is unitarily equivalent to $A\in \mathcal{X}$, then $L\in \mathcal{X}$.
\item Let $\lambda \in \mathbb{C}$ be a scalar. Then,  $\mathcal{X}+ \lambda I$ is C*-convex if and only if  $\mathcal{X}$ is. In particular, if $\lambda \in \mathcal{X}$, we can assume that in most cases, $0\in \mathcal{X}$.
\end{enumerate}
\end{lem}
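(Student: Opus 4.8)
The plan is to prove both statements by direct verification using the definition of C*-convexity, exploiting the fact that the defining condition is a sum $\sum A_i^* x_i A_i$ with the constraint $\sum A_i^* A_i = 1$.

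For part (1), suppose $\mathcal{X}$ is C*-convex and $A \in \mathcal{X}$. Given any isometry $V$ with $V^*V = 1$, I observe that this is simply the single-term case of the defining sum: taking $n=1$, $x_1 = A$, and $A_1 = V$, the constraint $A_1^*A_1 = V^*V = 1$ is exactly satisfied, so $V^*AV = A_1^* x_1 A_1 \in \mathcal{X}$. This immediately gives $\{V^*AV : V^*V = 1\} \subset \mathcal{X}$. The ``in particular'' clause about unitary equivalence follows because a unitary $U$ satisfies $U^*U = 1$, so if $L = U^*AU$ is unitarily equivalent to $A \in \mathcal{X}$, then $L \in \mathcal{X}$ by what was just shown (unitaries are in particular isometries).

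For part (2), I would argue by showing the translation map $x \mapsto x + \lambda I$ carries C*-convex combinations to C*-convex combinations, which gives a bijection between C*-convex structure on $\mathcal{X}$ and on $\mathcal{X} + \lambda I$. The key computation is that for any $\{A_1,\dots,A_n\}$ with $\sum_{i=1}^n A_i^*A_i = 1$ and any elements $y_i = x_i + \lambda I$, we have
\[
\sum_{i=1}^n A_i^*(x_i + \lambda I)A_i = \sum_{i=1}^n A_i^* x_i A_i + \lambda \sum_{i=1}^n A_i^* A_i = \left(\sum_{i=1}^n A_i^* x_i A_i\right) + \lambda I,
\]
where the crucial point is that the scalar $\lambda$ commutes with each $A_i$ and factors out, and then the constraint $\sum A_i^* A_i = 1$ collapses the second term to exactly $\lambda I$. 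Thus a C*-convex combination of the $y_i$ equals a C*-convex combination of the $x_i$ translated by $\lambda I$, establishing the equivalence in both directions.

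I do not anticipate a serious obstacle here, since both claims reduce to elementary manipulations of the defining identity; the only point requiring a little care is to make sure part (1) is genuinely just the one-term instance (so that no nontrivial constraint is being swept under the rug) and that in part (2) the scalar $\lambda I$ is handled as a central element so it pulls out of the sum cleanly. The ``most cases'' remark at the end is informal and needs no proof — it merely notes that if some $\lambda \in \mathcal{X}$ is a scalar multiple of the identity, one may translate by $-\lambda$ so that $0 \in \mathcal{X}$, a normalization justified precisely by the equivalence established in part (2).
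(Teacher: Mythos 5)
Your proposal is correct. Note that the paper does not actually prove this lemma at all: its ``proof'' is only the citation ``See Remark 2 and Remark 4 of \cite{loebl1981some}'', so your argument supplies the details the paper outsources to Loebl--Paulsen, and it is the standard verification one would expect those remarks to contain. Part (1) is indeed nothing more than the $n=1$ instance of the defining condition (with $A_1 = V$ an isometry, so that $A_1^*A_1 = 1$ is exactly the required normalization), and part (2) hinges on precisely the two points you isolate: $\lambda I$ is central, so it factors out of $\sum_i A_i^* (x_i + \lambda I) A_i$, and the constraint $\sum_i A_i^* A_i = 1$ collapses the extracted term to $\lambda I$; the reverse implication then follows by translating by $-\lambda$. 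No gaps.
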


\begin{proof}
See Remark 2 and Remark 4 of \cite{loebl1981some}.
\end{proof}

\begin{defn}Let $\A$ be a C*-algebra. If $S\subset \A$, let $MCL(S)$ denote the smallest norm closed C*-convex set containing $S$.
\end{defn}

\begin{lem}\label{closure_convex}Let $\A$ be a C*-algebra and $\mathcal{X}\subset \A$. If $\mathcal{X}$ is C*-convex, so is its norm closure.
\end{lem}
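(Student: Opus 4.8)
The plan is to prove this by a straightforward norm-approximation argument, exploiting the fact that the defining C*-convex combinations are continuous in the elements being combined, with the coefficients held fixed. Concretely, I would fix elements $x_1,\dots,x_n \in \overline{\mathcal{X}}$ and coefficients $A_1,\dots,A_n \in \A$ with $\sum_{i=1}^n A_i^* A_i = 1$, and show that $y := \sum_{i=1}^n A_i^* x_i A_i$ lies in $\overline{\mathcal{X}}$.

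First I would choose, for each $i$, a sequence $(x_i^{(k)})_k \subset \mathcal{X}$ with $x_i^{(k)} \to x_i$ in norm as $k\to\infty$, which exists since each $x_i$ belongs to the norm closure of $\mathcal{X}$. For each fixed $k$, set $y_k := \sum_{i=1}^n A_i^* x_i^{(k)} A_i$. Because the $x_i^{(k)}$ lie in $\mathcal{X}$ and the same coefficients $A_i$ still satisfy $\sum_{i=1}^n A_i^* A_i = 1$, the C*-convexity of $\mathcal{X}$ gives $y_k \in \mathcal{X}$ for every $k$.

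Next I would verify that $y_k \to y$ in norm. Using the triangle inequality and submultiplicativity of the norm together with $\|A_i^*\| = \|A_i\|$, I estimate
\[
\|y_k - y\| = \Bigl\| \sum_{i=1}^n A_i^* \bigl(x_i^{(k)} - x_i\bigr) A_i \Bigr\| \leq \sum_{i=1}^n \|A_i\|^2 \,\bigl\| x_i^{(k)} - x_i \bigr\|,
\]
and the right-hand side tends to $0$ as $k\to\infty$ since each $\|x_i^{(k)} - x_i\| \to 0$ and the finitely many factors $\|A_i\|^2$ are fixed constants. Hence $y = \lim_k y_k$ is a norm limit of elements of $\mathcal{X}$, so $y \in \overline{\mathcal{X}}$, which is exactly what we need.

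I do not expect any genuine obstacle here: the argument is essentially the observation that for fixed coefficients the map $(x_1,\dots,x_n)\mapsto \sum_i A_i^* x_i A_i$ is norm-continuous, so membership in the defining relation passes to limits. The only point requiring a modicum of care is that the constraint $\sum_{i=1}^n A_i^* A_i = 1$ is imposed on the coefficients and is left untouched by the approximation, so the hypothesis of C*-convexity applies verbatim to each approximating combination $y_k$.
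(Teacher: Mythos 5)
Your proof is correct and complete. The paper does not actually spell out an argument for this lemma---it simply defers to Lemma 4 of \cite{loebl1981some}---and your approximation argument is precisely the standard proof of that result: for fixed coefficients $A_1,\dots,A_n$ the map $(x_1,\dots,x_n)\mapsto\sum_{i=1}^n A_i^* x_i A_i$ is norm-continuous, the constraint $\sum_{i=1}^n A_i^*A_i=1$ is untouched by the approximation, and so the defining property passes to the norm closure. (One could even note $\|A_i\|\leq 1$ from $A_i^*A_i\leq 1$, but your estimate does not need it.)
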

\begin{proof}
See Lemma 4 of \cite{loebl1981some}. 
\end{proof}

%\begin{proof}
%Let $T_1, \cdots, T_n \in \overline{K}$ and $A_1, \cdots, A_n$ such that $\sum_{i=1} A_i^* A_i  =1$.
%\begin{enumerate}
%\item Since $\sum_{i=1} A_i^* A_i  =1$ we have that $A_i^* A_i\leq 1$ and therefore $\|A_i\|\leq 1$.
%\item Let $\varepsilon>0$ and consider $S_i \in K$ such that 
%$$\|T_i - S_i\|\leq \varepsilon/n$$
%\item Since $S_i \in K$, we have that $\sum_i A_i^* S_i A_i \in K$.
%\item 
%$$\|\sum_i A_i^* T_i A_i  - \sum_i A_i^* S_i A_i \| = \|\sum_i A_i^* (T_i - S_i )A_i \|\leq \sum_i \|A_i^* (T_i - S_i )A_i\|%<\varepsilon$$
%\end{enumerate}
%\end{proof}

\begin{lem}Let $\A$ be a C*-algebra and $S\subset \A$. Then
$$MCL(S) = \overline{\{\sum_i^n A_i^* X_i A_i, \sum_i^n A_i^* A_i = 1, X_i\in S\}}$$
\end{lem}
\begin{proof}
Define $\mathcal{X} = \{\sum_i^n A_i^* X_i A_i, \sum_i^n A_i^* A_i = 1, X_i\in S\}$. 
Then $\mathcal{X}$ is C*-convex.
In fact, let $\{Y_1, \cdots, Y_n\}\subset \mathcal{X}$ and $\{A_1, \cdots, A_n\}\subset \A$ with $\sum_{i=1}^n A_i^* A_i = 1$.
We have that $Y_i = \sum_j^{n_i} A_{i,j}^* X_{i,j} A_{i,j}$. Then
\begin{align*}
\sum_{i=1}^n A_i^* Y_i A_i & =  \sum_{i=1}^n A_i^* (\sum_j^{n_i} A_{i,j}^* X_{i,j} A_{i,j})A_i \\
							& = \sum_{i=1}^n \sum_j^{n_i} A_i^* A_{i,j}^* X_{i,j} A_{i,j}A_i\\
							& =  \sum_{i=1}^n \sum_j^{n_i} (A_{i,j} A_i)^* X_{i,j} (A_{i,j}A_i)\\
\end{align*}
and 
\begin{align*}
\sum_{i=1}^n \sum_j^{n_i} (A_{i,j} A_i)^* (A_{i,j}A_i) & =  \sum_{i=1}^n \sum_j^{n_i} A_i^* A_{i,j}^* (A_{i,j}A_i)\\
& =  \sum_{i=1}^n  A_i^* (\sum_j^{n_i} A_{i,j}^* A_{i,j})A_i \\
& =  \sum_{i=1}^n  A_i^* A_i  = 1.
\end{align*}
This proves that $\mathcal{X}$ is C*-convex.

Since $S\subset MCL(S)$, this follows that $\mathcal{X}\subset MCL(S)$. In particular, $\overline{\mathcal{X}}\subset MCL(S)$. 

On the other hand, $\mathcal{X}$ is C*-convex, and thefore $\overline{{X}}$ is C*-convex and closed (see Lemma \ref{closure_convex}). Since $S\subset \overline{\mathcal{X}}$, it follows that $MCL(S)\subset \overline{\mathcal{X}}$.
\end{proof}

Let's define the following noncommutative version of the notion of an absolutely convex set:

\begin{defn}
Let $\A$ be a unital C*-algebra. A set $\mathcal{X}\subset \A$ is C*-absolutely convex if $\{x_1, \cdots, x_n\}\subset \mathcal{X}$ and $\{A_1, \cdots, A_n\}\subset \A$ with $\sum_{i=1}^n A_i^* A_i \leq  1$ implies 
$${\sum_{i=1}^n A_i^* x_i A_i \in \mathcal{X}}.$$
\end{defn}

\begin{lem}Let $\A$ be a unital and $X\subset \A$. Then $MCL(X\cup\{0\})$ is a C*-absolutely convex set.
\end{lem}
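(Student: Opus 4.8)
The plan is to exploit the fact that, by definition, $MCL(X\cup\{0\})$ is already a norm closed C*-convex set which \emph{contains} the element $0$. The whole point of adjoining $0$ before taking the closed C*-convex hull is that it lets us convert any C*-absolutely convex combination (coefficients summing to at most $1$) into an honest C*-convex combination (coefficients summing to exactly $1$) by inserting an extra term whose operator coefficient absorbs the ``defect'' $1-\sum A_i^*A_i$ and is paired with the harmless element $0$.

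Concretely, first I would fix $\{x_1,\dots,x_n\}\subset MCL(X\cup\{0\})$ and $\{A_1,\dots,A_n\}\subset\A$ with $\sum_{i=1}^n A_i^*A_i\leq 1$, and set
$$D = 1 - \sum_{i=1}^n A_i^*A_i.$$
The hypothesis $\sum_{i=1}^n A_i^*A_i\leq 1$ says precisely that $D\geq 0$, so the continuous functional calculus in $\A$ produces a positive (in particular self-adjoint) square root $B := D^{1/2}\in\A$ with $B^*B = B^2 = D$. By construction the augmented family of coefficients satisfies
$$\sum_{i=1}^n A_i^*A_i + B^*B = \sum_{i=1}^n A_i^*A_i + D = 1.$$

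Next, I would apply the C*-convexity of $MCL(X\cup\{0\})$ to the enlarged data: the elements $x_1,\dots,x_n,0$ all lie in $MCL(X\cup\{0\})$ (here we use $0\in X\cup\{0\}\subset MCL(X\cup\{0\})$), and the coefficients $A_1,\dots,A_n,B$ form a C*-convex combination by the displayed identity. Hence
$$\sum_{i=1}^n A_i^*x_iA_i + B^*\,0\,B = \sum_{i=1}^n A_i^*x_iA_i \in MCL(X\cup\{0\}),$$
which is exactly the required closure property. Since the $x_i$ and $A_i$ were arbitrary, $MCL(X\cup\{0\})$ is C*-absolutely convex.

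There is essentially no hard step: the argument is a one-line augmentation trick. The only point needing care is the existence of the coefficient $B$, which rests on the observation that $1-\sum_{i=1}^n A_i^*A_i$ is a positive element and therefore admits a self-adjoint square root in the unital C*-algebra $\A$, together with the fact that $0$ genuinely belongs to the set, guaranteed by having adjoined it before forming $MCL$. Both are immediate, so I expect the entire difficulty to be bookkeeping rather than anything conceptual.
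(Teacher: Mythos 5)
Your proof is correct and is essentially the paper's own argument: the paper likewise sets $x_{n+1}=0$, $A_{n+1}=\sqrt{1-\sum_{i=1}^n A_i^*A_i}$, and applies the C*-convexity of $MCL(X\cup\{0\})$ to the augmented family. The only difference is that you explicitly justify the existence of the square root via positivity and the functional calculus, which the paper leaves implicit.
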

\begin{proof}
Define $C: = MCL(X\cup\{0\})$. Let $x_1, \cdots, x_n \in C$ and $A_1, \cdots, A_n$ such that $\sum_{i=1}A^*_i A_i\leq 1$. Let us prove that ${\sum_{i=1}^{n} A^*_i x_i A_i\in C}$.

Define $A_{n+1} = \sqrt{1 - (\sum_{i=1}A^*_i A_i)}$ and  $x_{n+1} = 0$ . Then 
$$\sum_{i=1}^n A^*_i x_i A_i = \sum_{i=1}^{n+1} A^*_i x_i A_i.$$

Since $\sum_{i=1}^{n+1}A^*_i A_i = 1$, it follows that 
$\sum_{i=1}^{n+1} A^*_i x_i A_i\in C$ and therefore ${\sum_{i=1}^{n} A^*_i x_i A_i\in C}$.
%If $x_1 = 0$, do the same with $\sum_{i=2}A^*_i A_i\leq 1$

\end{proof}

\section{C* - polyhedron and \cc}\label{c-polyhedron}
In this section, we define the main objects of this article and we explore some of its main properties.

\begin{defn}
Let $\A$ be a unital C*-algebra. A family $(a_\alpha)_{\alpha<\kappa}\subset \A$ is a C*-polyhedron of size $\kappa$ if for every $\alpha<\kappa$
$$a_\alpha \notin  MCL(\{a_\beta: \beta\neq \alpha\})$$
\end{defn}

Since C*-convex set is in particular convex, it follows that C*-polyhedron implies polyhedron in the classical sense. The following remark shows that these two notions are not equivalent:

\begin{rem}\label{remark1}
Let $\A  = M_2(\mathbb{C})$ and $K = \{P_1, P_2\}$ where 
$P_1 = \begin{pmatrix}
		  		1 & 0\\
		  		0 & 0
	 	 		\end{pmatrix}$ and $P_2 = \begin{pmatrix}
		  		0 & 0\\
		  		0 & 1
	 	 		\end{pmatrix}$.
	 	 		
Then $K$ is a polyhedron. On the other hand, if $U  = \begin{pmatrix}
		  		0 & 1\\
		  		-1 & 0
	 	 		\end{pmatrix}$ 
then  
	 	 		$$U^* P_1 U =  \begin{pmatrix}
		  		0 & -1\\
		  		1 & 0
	 	 		\end{pmatrix} \begin{pmatrix}
		  		1 & 0\\
		  		0 & 0
	 	 		\end{pmatrix} \begin{pmatrix}
		  		0 & 1\\
		  		-1 & 0
	 	 		\end{pmatrix}= \begin{pmatrix}
		  					0 & 0\\
		  					0 & 1
	 	 					\end{pmatrix} = P_2$$
In particular, $P_2\in MCL(\{P_1\})$ which shows us that $K$ is not a C*-polyhedron.

\end{rem}

%\begin{lem}For every $n\in \mathbb{N}$, the one-dimensional C*-algebra $\mathbb{C}$ has a C*-polyhedron of size $n$.
%\end{lem}

%\begin{proof}
%\begin{center}
%\includegraphics[scale=0.4]{img/n-poly_complex_numbers.png}
%\end{center}
%\end{proof}

We introduce now the following variant of the notion of a C*-polyhedron:
 
\begin{defn}Let $\A$ be a C*-algebra. A family $(a_\alpha)_{\alpha<\kappa}$ is a \cc of size $\kappa$ if for every $\alpha<\kappa$
$$a_\alpha \notin  MCL(\{a_\beta: \beta\neq \alpha\}\cup\{0\})$$
\end{defn}

The notion of \cc is related to the notion of C*-absolutely convex sets:
\begin{lem}
A family $(a_\alpha)_{\alpha<\kappa}$ is a \cc if and only if for every ${\alpha<\kappa}$, there is a C*-absolutely convex set $C$ such that $a_\alpha \notin C$ and $a_\beta \in C$ for every $\beta\neq \alpha$.
\end{lem}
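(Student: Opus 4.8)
The plan is to prove both implications directly, using the two structural facts already established: that $MCL(S)$ is by definition the smallest norm-closed C*-convex set containing $S$, and that (by the preceding lemma) $MCL(X\cup\{0\})$ is automatically C*-absolutely convex. Throughout I read the witnessing set $C$ as a \emph{closed} C*-absolutely convex set, in accordance with the convention fixed in the introduction; as I explain below, closedness is what makes the statement correct.

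For the forward direction, I would take a \cc $(a_\alpha)_{\alpha<\kappa}$, fix $\alpha<\kappa$, and simply exhibit the canonical witness $C:=MCL(\{a_\beta:\beta\neq\alpha\}\cup\{0\})$. This $C$ is closed and C*-convex by the definition of $MCL$, and the preceding lemma upgrades it to C*-absolutely convex precisely because it is generated together with $0$. Every $a_\beta$ with $\beta\neq\alpha$ lies in the generating set, hence in $C$, while the defining property of a \cc is exactly the assertion $a_\alpha\notin C$. So $C$ witnesses the right-hand condition with no further work.

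For the reverse direction, I would assume each $\alpha$ comes with a closed C*-absolutely convex witness $C_\alpha$ satisfying $a_\beta\in C_\alpha$ for $\beta\neq\alpha$ and $a_\alpha\notin C_\alpha$. First I would record two elementary observations: every nonempty C*-absolutely convex set contains $0$ (specialize to a single summand with $A_1=0$, so $\sum A_i^*A_i=0\leq 1$), and every C*-absolutely convex set is in particular C*-convex (specialize the condition $\sum A_i^*A_i\leq 1$ to $\sum A_i^*A_i=1$). Consequently $C_\alpha$ is a closed C*-convex set containing $\{a_\beta:\beta\neq\alpha\}\cup\{0\}$, and the minimality of $MCL$ yields $MCL(\{a_\beta:\beta\neq\alpha\}\cup\{0\})\subseteq C_\alpha$. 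Since $a_\alpha\notin C_\alpha$, a fortiori $a_\alpha\notin MCL(\{a_\beta:\beta\neq\alpha\}\cup\{0\})$, which is exactly the \cc condition.

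The only real subtlety, and the step I would flag, is the closedness of the witness. If $C$ were allowed to be an arbitrary, possibly non-closed, C*-absolutely convex set, then $a_\alpha\notin C$ need not persist after passing to the norm closure, and the reverse implication could fail; the reverse direction crucially compares $C_\alpha$ against the \emph{closed} set $MCL(\cdots)$. Once closedness is imposed, both inclusions are immediate from the minimality of $MCL$ together with the two quoted lemmas, so the proof is short and the main point is simply to keep track of which sets are closed.
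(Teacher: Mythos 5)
Your proof is correct and takes essentially the paper's approach: the paper's own proof is a one-line appeal to the fact that $MCL(\{a_\beta:\beta\neq\alpha\}\cup\{0\})$ is C*-absolutely convex (your forward direction), and your reverse direction---closed C*-absolutely convex sets contain $0$, are C*-convex, hence contain the $MCL$ by minimality---is exactly the completion the paper leaves implicit. Your closedness caveat is also well taken: the lemma's statement omits the word ``closed'' (though the paper's definition of the witnessing sets includes it), and with non-closed witnesses the reverse implication genuinely fails, e.g.\ in $\mathbb{C}$ the family $a_0=1$, $a_n=e^{i/n}$ admits non-closed C*-absolutely convex witnesses at every index yet $a_0\in MCL(\{a_n:n\geq 1\}\cup\{0\})$.
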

\begin{proof}
It follows from the fact that $MCL(\{a_\beta: \beta\neq \alpha\}\cup\{0\})$ is a C*-absolutely convex set.
% proving that C is C*-absoutely convex set: 
%let x_1, \cdots, x_n \in C$ and $A_1, \cdots, A_n$ such that $\sum_{i=1}A^*_i A_i\leq 1$. Then define $A_{n+1} = \sqrt{1 - (\sum_{i=1}A^*_i A_i)}$ $x_{n+1} = 0$ . Then $\sum_{i=1}^n A^*_i x_i A_i = \sum_{i=1}^{n+1} A^*_i x_i A_i\in C$. If $x_1 = 0$, do the same with $\sum_{i=2}A^*_i A_i\leq 1$
\end{proof}

Every \cc is in particular a C*-polyhedron, but these two notions are not the same. For instance, consider the one-dimensional C*-algebra $\mathbb{C}$ and $\mathcal{F} = \{(1,0), (0,1), (\frac{1}{3}, \frac{1}{3})\}\subset \mathbb{C}$. Then $(\frac{1}{3}, \frac{1}{3}) = \frac{1}{\sqrt{3}}(1,0)\frac{1}{\sqrt{3}} + \frac{1}{\sqrt{3}}(0,1)\frac{1}{\sqrt{3}}$, with $(\frac{1}{\sqrt{3}})^2 + (\frac{1}{\sqrt{3}})^2 = \frac{2}{3}<1$ which shows that $\mathcal{F}$ is not a \cc. On the other hand, if $a,b\in \mathbb{C}$ and $\|a\|^2 + \|b\|^2 = 1$, then $a(1,0)a + b(0,1)b \neq (\frac{1}{3}, \frac{1}{3}) $. In particular, $\mathcal{F}$ is a C*-polyhedron.

We have seen in Section \ref{preliminares} that a Banach space $X$ has an uncountable polyhedron if, and only if, $X$ has an uncountable 0-polyhedron. We will prove in Proposition \ref{equivalent_ck} that the same happens with C*-polyhedron and \cc in commutative C*-algebras, but we do not know what is the situation in general C*-algebras.

In the one-dimensional C*-algebra, the notion of C*-polyhedron and the notion of polyhedron coincide. In particular, by Lemma \ref{c_infinity_poly}, the one-dimensional C*-algebra $\mathbb{C}$ has an infinity \cc (in particular, it has an infinite C*-polyhedron).

Since every C*-algebra has the one-dimensional C*-algebra $\mathbb{C}$ as a subalgebra, using the same idea as in Proposition \ref{infinite_polyhedron}, we could conclude that every C*-algebra has an infinite \cc. The problem is that notion of \cc is not preserved by superalgebras:

\begin{rem}
Consider the two orthogonal projections $P_1, P_2$ as in Remark \ref{remark1}.
Define  $\A = C^*(P_1, P_2)  = span(\{P_1, P_2\})\subset M_2(\mathbb{C})$. Observe that
$$\begin{pmatrix}
\lambda_1 & 0\\
0 		  & \lambda_2
\end{pmatrix} * 
\begin{pmatrix}
1 & 0\\
0 & 0
\end{pmatrix} * 
\begin{pmatrix}
\overline{\lambda_1} & 0\\
        0 & \overline{\lambda_2}
\end{pmatrix} = 
\begin{pmatrix}
|\lambda_1|^2 & 0\\
        0 & 0
\end{pmatrix}
$$
In particular, $P_2\notin MCL(\{P_1, 0\})$. The same holds with $P_1$ replaced by $P_2$.
This shows us that $\{P_1, P_2\}$ is a \cc in $\A$.
On the other hand, as we have seen in Remark \ref{remark1}, $\{P_1, P_2\}$ is not a C*-polyhedron (and therefore, it is not a \cc) in $M_2$. 
\end{rem}

However, with a different approach, we are able to prove the following:

\begin{thm}\label{infinite}Let $\A$ be an unital C*-algebra. Then $\A$ has an infinity \cc.
\end{thm}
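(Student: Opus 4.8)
The plan is to build an explicit infinite \cc out of central elements --- scalar multiples of the unit --- which neatly avoids the obstruction that the \cc property is not inherited from a subalgebra. Concretely, I would take the points $(\lambda_n)_{n\in\mathbb{N}}\subset\mathbb{C}$ produced in Lemma \ref{c_infinity_poly} and set $a_n:=\lambda_n\,1\in\A$, then verify that $(a_n)_n$ is a \cc. The reason for working with multiples of the unit is that they are central: for any $A_1,\dots,A_k\in\A$ we have $\sum_i A_i^*(\lambda 1)A_i=\lambda\sum_iA_i^*A_i$, so a C*-convex manipulation can never turn a scalar into a genuinely noncommutative element --- precisely the phenomenon that defeats the naive superalgebra argument mentioned above.

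After fixing $n$, I would use the explicit description of $MCL$ to write a generic (pre-closure) element of $MCL(\{a_m:m\neq n\}\cup\{0\})$ as
$$x=\sum_i A_i^* a_{m_i}A_i=\sum_i\lambda_{m_i}P_i,\qquad P_i:=A_i^*A_i\ge 0,\quad \sum_iP_i\le 1,$$
where each $m_i\neq n$ and the $0$-summands simply disappear (this is exactly what relaxes $\sum_iP_i=1$ to $\sum_iP_i\le1$). It then suffices to bound $\|a_n-x\|$ below by a constant independent of the chosen combination.

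The heart of the argument is to upgrade a planar separation into an operator inequality. Writing $D_n:=\overline{co}(\{\lambda_m:m\neq n\}\cup\{0\})$ and $\delta:=dist(\lambda_n,D_n)>0$ (positive by Lemma \ref{c_infinity_poly}), I would pick a nearest-point supporting functional: a unimodular $u\in\mathbb{C}$ and a real $c\ge0$ (nonnegative because $0\in D_n$) with $\mathrm{Re}(\bar uz)\le c$ for all $z\in D_n$ and $\mathrm{Re}(\bar u\lambda_n)=c+\delta$. Since every $\lambda_{m_i}\in D_n$ and every $P_i\ge0$, the self-adjoint element $\mathrm{Re}(\bar ux)=\tfrac12(\bar ux+ux^*)=\sum_i\mathrm{Re}(\bar u\lambda_{m_i})P_i$ obeys $\mathrm{Re}(\bar ux)\le c\sum_iP_i\le c\,1$. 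Hence $\mathrm{Re}\bigl(\bar u(a_n-x)\bigr)\ge(c+\delta)1-c\,1=\delta\,1$, and as $\|\mathrm{Re}(y)\|\le\|y\|$ with $|u|=1$ this yields $\|a_n-x\|\ge\delta$.

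This gives $dist\bigl(a_n,MCL(\{a_m:m\neq n\}\cup\{0\})\bigr)\ge\delta>0$ for every $n$, so $(a_n)_n$ is an infinite \cc, as desired. I expect the only genuinely delicate step to be this last chain, namely passing from the scalar separation to the operator bound; but once one observes that centrality forces $x$ to be the positive combination $\sum_i\lambda_{m_i}P_i$ and that applying $\mathrm{Re}(\bar u\,\cdot)$ converts the half-plane estimate into the operator upper bound $c\,1$, the lower bound is immediate and, crucially, uniform over all admissible combinations.
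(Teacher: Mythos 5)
Your proposal is correct and rests on the same construction as the paper's proof: both take the scalars $(\lambda_n)_n$ of Lemma \ref{c_infinity_poly} and consider the central elements $a_n=\lambda_n 1$, and both exploit centrality to reduce the question to the planar separation that lemma provides. The difference lies entirely in how that scalar separation is converted into a norm bound. The paper pushes the operator expression down to a scalar: for an arbitrary state $\tau$, centrality gives $\tau\bigl(\sum_j A_j^* a_{m_j} A_j\bigr)=\sum_j \lambda_{m_j}\,\tau(A_j^*A_j)$, an absolutely convex combination of the $\lambda_{m_j}$, so the planar distance from Lemma \ref{c_infinity_poly} bounds $|\tau(a_n-x)|\leq\|a_n-x\|$ from below. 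You instead pull the planar separation up to an operator inequality: a supporting functional $\mathrm{Re}(\bar u z)\leq c$ for the hull (with $c\geq 0$ because $0$ lies in the hull, which is exactly what makes the step $c\sum_i P_i\leq c\,1$ legitimate), combined with $P_i\geq 0$ and $\sum_i P_i\leq 1$, yields $\mathrm{Re}\bigl(\bar u(a_n-x)\bigr)\geq\delta\,1$ and hence $\|a_n-x\|\geq\delta$. Both routes are sound; the paper's is a little shorter, since applying a state needs no explicit supporting line and quotes the scalar lemma directly, while yours is more self-contained and produces the slightly stronger operator-level inequality, making the uniformity of the distance bound over all admissible combinations transparent. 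Your bookkeeping is also correct on the one delicate preliminary point: discarding the $0$-summands from a representation $\sum_i A_i^*x_iA_i$ with $\sum_i A_i^*A_i=1$ is precisely what relaxes the constraint to $\sum_i P_i\leq 1$, and the bound you prove on the dense (pre-closure) set passes to $MCL$ since distance to a set equals distance to its closure.
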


\begin{proof}
Let $(\lambda_n)_n$ be the sequence of complex numbers as in Lemma \ref{c_infinity_poly} and denote by $I$ the unity of $\A$. Define
$X_n = \lambda_n I$. We claim that $(X_n)_n$ is a \cc.

In fact, consider $X_{n_0}$ and $X_{n_1}, \cdots, X_{n_k}$. Let $A_1, \cdots , A_n\in \A$ such that ${\sum A_i^* A_i \leq  1}$. Consider $\tau$ a state on $\A$.

Then $$|\tau(X_{n_0} - \sum_{j=1}^kA_j^* X_{n_j} A_j)|\leq |X_{n_0} - \sum_{j=1}^kA_j^* X_{n_j} A_j|$$

On the other hand, $\tau(X_{n_0} - \sum_{j=1}^kA_j^* X_{n_j} A_j) = \lambda_{n_0} - \sum_{j=1}^k \lambda_{n_j}\tau(A_j^*A_j)$.

Since $A_i^*A_i$ is positive and $\sum_{i=1}^k \tau(A_i^*A_i) \leq 1$, it follows that $\sum_{j=1}^k \lambda_{n_j}\tau(A_j^*A_j)$ is a linear absolutely convex combinations of the elements $(\lambda_{n_j})_j$. In particular, by the previous lemma, there is $\varepsilon_n>0$ such that  $0<\varepsilon_{n_0}\leq |\lambda_{n_0} - \sum_{j=1}^k \lambda_{n_j}\tau(A_j^*A_j)|$. Then
$$0<\varepsilon_{n_0}< |X_{n_0} - \sum_{j=1}^kA_j^* X_{n_j} A_j|$$
This shows that $X_{n_0}\notin MCL(\{X_n: n\neq n_0\}\cup \{0\})$.

\end{proof}

By Theorem \ref{infinite}, every C*-algebra $\A$ has an infinite \cc. In particular, the cardinal $\omega$ is a lower bound for the supremum of all sizes of a \cc in a C*-algebra. The following lemma gives us an upper bound:

\begin{lem}\label{upperbound}Let $\A$ be a C*-algebra. Then $d(\A)$ is an upper bound for the size of a C*-polyhedron.
\end{lem}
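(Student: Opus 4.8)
The plan is to show that $d(\A)$ bounds the size of any C*-polyhedron by establishing that a C*-polyhedron is a discrete (in fact, uniformly separated) subset of $\A$ in the norm topology, exactly as in the classical Banach space case. Since the norm density of $\A$ is $d(\A)$, any norm-discrete subset has cardinality at most $d(\A)$, which gives the bound.

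First I would fix a C*-polyhedron $(a_\alpha)_{\alpha<\kappa}$ and argue that it is norm-discrete. The key observation is that for each $\alpha$, the set $C_\alpha := MCL(\{a_\beta : \beta \neq \alpha\})$ is a norm-closed C*-convex set (by definition of $MCL$) with $a_\alpha \notin C_\alpha$, while $a_\beta \in C_\alpha$ for all $\beta \neq \alpha$. Hence $\mathrm{dist}(a_\alpha, C_\alpha) > 0$, and since every $a_\beta$ with $\beta \neq \alpha$ lies in $C_\alpha$, we obtain $\|a_\alpha - a_\beta\| \geq \mathrm{dist}(a_\alpha, C_\alpha) > 0$ for all $\beta \neq \alpha$. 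This already shows the points are pairwise distinct, but to bound the cardinality by the density I need a genuine separation, not just distinctness.

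The main step is therefore to convert pointwise positivity of the distances into the conclusion that the family has size at most $d(\A)$. Here I would mimic the standard argument: let $D \subset \A$ be a norm-dense subset with $|D| = d(\A)$. For each $\alpha$ set $\delta_\alpha := \mathrm{dist}(a_\alpha, C_\alpha) > 0$ and pick $d_\alpha \in D$ with $\|a_\alpha - d_\alpha\| < \delta_\alpha/2$. I claim the assignment $\alpha \mapsto d_\alpha$ is injective. Indeed, if $d_\alpha = d_\gamma$ for $\alpha \neq \gamma$, then since $a_\gamma \in C_\alpha$ we have $\|a_\alpha - a_\gamma\| \geq \delta_\alpha$, yet the triangle inequality gives $\|a_\alpha - a_\gamma\| \leq \|a_\alpha - d_\alpha\| + \|d_\gamma - a_\gamma\| < \delta_\alpha/2 + \delta_\gamma/2$, and a symmetric use of $a_\alpha \in C_\gamma$ together with $\delta_\gamma$ forces a contradiction once one takes the larger of $\delta_\alpha, \delta_\gamma$. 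Injectivity then yields $\kappa \leq |D| = d(\A)$.

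The only subtlety to verify carefully is that $a_\alpha \notin C_\alpha$ genuinely gives a strictly positive distance; this is where norm-closedness of $C_\alpha$ is essential, and it is exactly the content built into the $MCL$ construction and Lemma \ref{closure_convex}, so no new argument is needed beyond invoking that $MCL(S)$ is norm-closed. I expect this injectivity/separation bookkeeping to be the only real point requiring care, while the rest is a direct transcription of the classical polyhedron argument, the noncommutative structure of the C*-convex sets playing no role beyond guaranteeing that each $C_\alpha$ is norm-closed and misses its corresponding point.
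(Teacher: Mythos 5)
Your proposal is correct and takes essentially the same approach as the paper: the paper's one-line proof observes that a C*-polyhedron is discrete in the norm topology and hence of size at most $d(\A)$, and your argument simply fills in the standard details of that observation (positive distance $\delta_\alpha$ to the norm-closed set $MCL(\{a_\beta:\beta\neq\alpha\})$, then the injective assignment into a dense set, which is the usual disjoint-balls argument showing spread is bounded by density in a metric space). The bookkeeping with $\max(\delta_\alpha,\delta_\gamma)$ in the injectivity step is exactly right, so there is no gap.
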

\begin{proof}
This follows from the fact that a C*-polyhedron is in particular a discrete set in the norm topology. 
\end{proof}

In particular, if $\A$ is a separable C*-algebra, then $\A$ has only infinite countable C*-polyhedrons and we can ask the following question:

\begin{quest}\label{question_existence}Is it true that every nonseparable C*-algebra has an uncountable \cc (or C*-polyhedron) ?
\end{quest}

In sections \ref{sec_commutative} and \ref{sec_noncommutative} we give some partial answers to this question.

We conclude this section with some results on uncountable \cc that will be used later.

\begin{lem}Let $\A$ be a C*-algebra and suppose $\A$ has an uncountable \cc (C*-polyhedron). Then $\A$ has a bounded uncountable \cc (C*-polyhedron).
\end{lem}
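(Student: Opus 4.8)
The plan is to extract a bounded uncountable subfamily from the given one by partitioning the index set according to norm, after first establishing the elementary fact that the defining property passes to subfamilies. The whole argument rests on the monotonicity of $MCL$ together with a pigeonhole argument on an uncountable set.

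First I would record that being a C*-polyhedron (respectively a \cc) is inherited by subfamilies. Suppose $(a_\alpha)_{\alpha<\kappa}$ is a C*-polyhedron and let $S\subseteq\kappa$. For each $\alpha\in S$ we have the inclusion $\{a_\beta:\beta\in S,\ \beta\neq\alpha\}\subseteq\{a_\beta:\beta\neq\alpha\}$, and since $MCL$ is monotone with respect to inclusion—because $MCL$ of the larger set is a norm-closed C*-convex set containing the smaller set, hence contains $MCL$ of the smaller set by minimality—it follows that
$$MCL(\{a_\beta:\beta\in S,\ \beta\neq\alpha\})\subseteq MCL(\{a_\beta:\beta\neq\alpha\}).$$
As $a_\alpha$ lies outside the set on the right, it lies outside the set on the left, so $(a_\alpha)_{\alpha\in S}$ is again a C*-polyhedron. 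The identical computation, carrying the extra point $0$ in each set, shows that a subfamily of a \cc is a \cc.

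With this reduction in hand I would split the index set by norm. For each $n\in\mathbb{N}$ put
$$S_n=\{\alpha<\kappa:\|a_\alpha\|\leq n\}.$$
Then $\bigcup_n S_n=\kappa$ is uncountable, so, since a countable union of countable sets is countable, there must exist some $n_0$ for which $S_{n_0}$ is uncountable. The subfamily $(a_\alpha)_{\alpha\in S_{n_0}}$ is bounded by $n_0$, and by the first step it is still a C*-polyhedron (respectively a \cc), which is exactly the desired bounded uncountable object.

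I do not expect a serious obstacle here: the only real content is the monotonicity of $MCL$, which is immediate from its definition as a smallest norm-closed C*-convex set, combined with the standard pigeonhole argument. The one mild point to be careful about is that we use only the uncountability of the union $\bigcup_n S_n$—rather than any specific value of $\kappa$—to guarantee that some single piece $S_{n_0}$ is uncountable; this conclusion is valid for every uncountable $\kappa$.
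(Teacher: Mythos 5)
Your proof is correct and follows essentially the same route as the paper's: a pigeonhole argument partitioning the index set by norm to extract an uncountable bounded subfamily. The only differences are cosmetic---you explicitly justify (via monotonicity of $MCL$) that the C*-polyhedron/\cc property passes to subfamilies, which the paper uses tacitly, and you omit the paper's rescaling $b_\alpha = a_\alpha/n$, which is unnecessary since boundedness by $n_0$ already suffices.
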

\begin{proof}
Let $(a_\alpha)_{\alpha<\omega}$ be an uncountable C*-polyhedron. Since 
$$(a_\alpha)_{\alpha<\omega} \subset \bigcup_{n = 1}^\infty \{a: \|a\|\leq n\}$$
there is $n$ such that $\Gamma = \{\alpha<\omega_1, \|a_\alpha\|<n\}$ is uncountable.

Define $b_\alpha = a_{\alpha}/n$ for each $\alpha\in \Gamma$. Then $(b_\alpha)_{\alpha\in \Gamma}$ is an uncountable and bounded C*-polyhedron.
\end{proof}

The following lemma will allow us to replace elements by elements in a dense set:

\begin{lem}\label{replace_elements}Let $\A$ be a C*-algebra. Suppose $(a_\alpha)_{\alpha<\omega_1}$ is an uncountable \cc (C*-polyhedron). Then there is an uncountable index $\Gamma\subset \omega_1$ and $\varepsilon>0$ such that if $\alpha \in \Gamma$ and  $\|a_\alpha - b\|\leq \varepsilon$, then $\{a_\beta: \beta\neq \alpha \in \Gamma\}\cup \{b\}$ is a \cc (C*-polyhedron).
\end{lem}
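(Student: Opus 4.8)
The plan is to reduce both requirements for the new family to a single quantitative estimate, after passing to an uncountable subset on which the relevant distances are uniformly bounded below. For each $\alpha<\omega_1$ set
$$d_\alpha = dist\big(a_\alpha,\, MCL(\{a_\beta:\beta\neq\alpha\}\cup\{0\})\big),$$
which is strictly positive precisely because $(a_\alpha)_{\alpha<\omega_1}$ is a \cc. Since $\omega_1=\bigcup_{n\geq 1}\{\alpha:d_\alpha>1/n\}$ is a countable union, one of these sets, call it $\Gamma$, is uncountable; I fix the corresponding $n$ and put $\varepsilon=\tfrac{1}{2n}$. I claim this $\Gamma$ and this $\varepsilon$ work. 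Fix $\alpha\in\Gamma$ and $b$ with $\|a_\alpha-b\|\leq\varepsilon$; the new family is $\{a_\beta:\beta\in\Gamma,\ \beta\neq\alpha\}\cup\{b\}$, and I must check two conditions: that $b$ stays outside the \cc-hull of the remaining elements, and that each other $a_\gamma$ (with $\gamma\in\Gamma$, $\gamma\neq\alpha$) stays outside the hull of the perturbed family.

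The technical heart, which handles the second (harder) condition, is a one-sided Hausdorff-continuity estimate for $MCL$ under perturbation of a single generator. Writing $S=\{a_\beta:\beta\in\Gamma,\ \beta\neq\gamma\}\cup\{0\}$ and $S'=(S\setminus\{a_\alpha\})\cup\{b\}$, I would take a basic element $x=\sum_i A_i^* X_i A_i$ of $MCL(S')$, with $X_i\in S'$ and $\sum_i A_i^*A_i\leq 1$, and replace every occurrence of $b$ among the $X_i$ by $a_\alpha$, producing $x'=\sum_i A_i^* X_i' A_i\in MCL(S)$ with the \emph{same} coefficients $A_i$. Then $x-x'=\sum_{i:\,X_i=b}A_i^*(b-a_\alpha)A_i$, and the key bound is that this has norm at most $\|b-a_\alpha\|\leq\varepsilon$. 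I would prove this cleanly with the column-operator trick: with $A$ the column $(A_1,\dots,A_m)^{\mathsf t}$ one has $\sum_i A_i^* c A_i = A^*\,\mathrm{diag}(c,\dots,c)\,A$ and $\|A\|^2=\|\sum_i A_i^*A_i\|\leq 1$, so $\|\sum_i A_i^* c A_i\|\leq\|c\|$ (equivalently, $x\mapsto\sum_i A_i^* x A_i$ is completely positive of norm $\|\sum_i A_i^*A_i\|\leq 1$). Consequently every point of $MCL(S')$ lies within $\varepsilon$ of $MCL(S)$, i.e. $MCL(S')$ is contained in the closed $\varepsilon$-neighbourhood of $MCL(S)$.

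Given this estimate, both conditions follow at once. For the second condition, monotonicity of $MCL$ under inclusion gives $dist(a_\gamma,MCL(S))\geq d_\gamma>1/n$, whence
$$dist\big(a_\gamma,MCL(S')\big)\;\geq\; dist\big(a_\gamma,MCL(S)\big)-\varepsilon\;>\;\tfrac1n-\tfrac1{2n}\;=\;\tfrac1{2n}\;>\;0,$$
so $a_\gamma\notin MCL(S')$. For the first condition, putting $T=\{a_\beta:\beta\in\Gamma,\ \beta\neq\alpha\}\cup\{0\}\subseteq\{a_\beta:\beta\neq\alpha\}\cup\{0\}$, monotonicity again yields $dist(a_\alpha,MCL(T))\geq d_\alpha>1/n$, and the triangle inequality gives $dist(b,MCL(T))\geq d_\alpha-\varepsilon>0$, so $b\notin MCL(T)$. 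Thus the perturbed family is again a \cc. The identical argument, with $\{0\}$ deleted and $\sum_iA_i^*A_i=1$ in place of $\leq 1$, handles the bare C*-polyhedron case, since the column-operator bound is insensitive to this distinction.

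I expect the main obstacle to be the geometric estimate of the second paragraph—verifying that the closed C*-(absolutely-)convex hull moves by at most $\|a_\alpha-b\|$ when one generator is perturbed. The subtlety is that $MCL$ is an infinite object defined through a norm closure, so I must produce the approximation at the level of the generating ``basic'' elements with identical coefficients $A_i$ and then pass to the closure, which is legitimate because the $\varepsilon$-neighbourhood of $MCL(S)$ is closed. The pigeonhole reduction and the two triangle-inequality arguments are then routine.
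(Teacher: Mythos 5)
Your proposal is correct and follows essentially the same route as the paper's own proof: the same pigeonhole argument producing a uniform lower bound $\tfrac{1}{n}$ on the distances and the same choice $\varepsilon = \tfrac{1}{2n}$, followed by the same key step of swapping $b$ for $a_\alpha$ inside a C*-convex combination and bounding the error by $\|a_\alpha - b\|$ via contractivity of $x \mapsto \sum_i A_i^* x A_i$. The differences are only organizational --- the paper argues by contradiction in two cases, which correspond exactly to your two conditions, and your column-operator estimate handling several occurrences of $b$ among the $X_i$ is in fact slightly more careful than the paper's write-up, which groups the $b$-term into a single coefficient $A$.
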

\begin{proof}
Suppose $(a_\alpha)_{\alpha<\omega_1}$ is a C*-polyhedron. The proof of the \cc case is similar.
Since $(a_\alpha)_{\alpha<\omega_1}$ is an uncountable C*-polyhedron, for each $\alpha<\omega_1$, there is $n_\alpha$ such that ${0<\frac{1}{n_\alpha}<dist(a_\alpha, MCL(\{a_\beta: \beta\neq \alpha\}))}$.

Consider $\Gamma \subset \omega_1$ and $n\in \mathbb{N}$ such that $n_\alpha = n$ for each $\alpha \in \Gamma$.
Define $\varepsilon = \frac{1}{2n}$. To make the notion clear, let's keep the notation $(a_\alpha)_{\alpha<\omega_1}$ for the subsequence.
Fix $\alpha<\omega_1$ and consider $b\in \A$ such that $\|a_\alpha - b\|\leq \varepsilon$.

Suppose $\{a_\beta: \beta \neq \alpha\}\cup \{b\}$ is not a C*-polyhedron. Then either $b{\in MCL(\{a_\beta: \beta \neq \alpha\})}$ or there is $\beta_0 \neq \alpha$ such that $a_{\beta_0}\in MCL(\{a_\beta: \beta \neq \alpha, \beta_0\}\cup \{b\})$.

\begin{enumerate}
\item Suppose $b\in MCL(\{a_\beta: \beta \neq \alpha\})$. Then there are $A_1, \cdots, A_n$ and $a_{\beta_1}, \cdots, a_{\beta_n}$ with $\sum_i A_i^* A_i= 1$ such that
$$\|b - \sum_i A_i^* a_{\beta_i} A_i\|\leq \varepsilon.$$
Since $\|a_\alpha - b\|\leq \varepsilon$, we would conclude that  $\|a_\alpha - \sum_i A_i^* a_{\beta_i} A_i\|\leq 2\varepsilon$, a contradiction with the choice of $\varepsilon$.
\item Suppose there is $\beta_0 \neq \alpha$ such that $a_{\beta_0}\in MCL(\{a_\beta: \beta \neq \alpha, \beta_0\}\cup \{b\})$. Then there are $A_1, \cdots, A_n, A$ and $a_{\beta_1}, \cdots, a_{\beta_n}$ with $\beta_1, \cdots, \beta_n \neq \alpha, \beta_0$ and $\sum_i A_i^* A_i + A^*A = 1$ such that
$$\|a_{\beta_0} - \sum_i A_i^* a_{\beta_i} A_i + A^* b A\|\leq \varepsilon.$$
Then 
\begin{align*}
\|a_{\beta_0} - \sum_i A_i^* a_{\beta_i} A_i + A^* a_\alpha A\| & =  
\|a_{\beta_0} - \sum_i A_i^* a_{\beta_i} A_i + A^* b A + A^* a_\alpha A - A^* b A \|\\
&\leq  \|a_{\beta_0} - \sum_i A_i^* a_{\beta_i} A_i + A^* b A\| +\|A^* a_\alpha A - A^* b A \|\\
&\leq   \varepsilon + \|A^*\|\|a_\alpha - b\|\|A\|\leq 2\varepsilon
\end{align*}

and this would imply $dist(a_{\beta_0}, MCL(\{a_\alpha, \alpha\neq \beta_0\})\leq 2\varepsilon $, which is again a contradiction with the choice of $\varepsilon$.
\end{enumerate}

\end{proof}

\section{C*-polyhedrons and homomorphisms}\label{sufficient_necessary}
In this section, we prove some necessary and sufficient conditions for the existence of a \cc.

\begin{lem}\label{sufficient_representations}Let $\A$ be a C*-algebra and $(a_\alpha, (\pi_\alpha, H_\alpha))_{\alpha<\kappa}$ a family of points $a_\alpha\in \A$ and representations $\pi_\alpha:\A \to B(H_\alpha)$ such that 
\begin{enumerate}
\item $\pi_\alpha(a_\alpha)\neq 0$ and 
\item $\pi_\alpha(a_\beta)= 0$ if $\alpha\neq \beta$.
\end{enumerate}
Then $(a_\alpha)_{\alpha<\kappa}$ is a \cc of size $\kappa$.
\end{lem}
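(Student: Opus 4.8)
The plan is to treat each representation $\pi_\alpha$ as the noncommutative replacement for the separating functional one uses to separate a point from a closed convex set in the classical setting. Concretely, to show that $(a_\alpha)_{\alpha<\kappa}$ is a \cc I must verify, for each fixed $\alpha$, that $a_\alpha \notin MCL(\{a_\beta:\beta\neq\alpha\}\cup\{0\})$, and the idea is that $\pi_\alpha$ annihilates the whole set $MCL(\{a_\beta:\beta\neq\alpha\}\cup\{0\})$ while $\pi_\alpha(a_\alpha)\neq 0$.

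First I would fix $\alpha<\kappa$ and recall from the earlier description of $MCL$ that every element of $MCL(\{a_\beta:\beta\neq\alpha\}\cup\{0\})$ is a norm limit of elements of the form $\sum_{i=1}^n A_i^* X_i A_i$ with $X_i\in\{a_\beta:\beta\neq\alpha\}\cup\{0\}$ and $\sum_i A_i^* A_i = 1$. Applying $\pi_\alpha$ to such a combination and using that $\pi_\alpha$ is a $*$-homomorphism (hence multiplicative and $*$-preserving) gives
$$\pi_\alpha\Big(\sum_{i=1}^n A_i^* X_i A_i\Big)=\sum_{i=1}^n \pi_\alpha(A_i)^*\,\pi_\alpha(X_i)\,\pi_\alpha(A_i).$$
By hypothesis (2) each $\pi_\alpha(a_\beta)=0$ for $\beta\neq\alpha$, and of course $\pi_\alpha(0)=0$, so every $\pi_\alpha(X_i)=0$ and the whole sum vanishes.

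Next I would invoke the continuity of $\pi_\alpha$: every $*$-representation of a C*-algebra is contractive, hence norm continuous, so $\pi_\alpha$ sends the norm closure of a set into the norm closure of its image. Since $\pi_\alpha$ is $0$ on all the finite combinations above, it is $0$ on their closure, i.e. $\pi_\alpha$ vanishes on all of $MCL(\{a_\beta:\beta\neq\alpha\}\cup\{0\})$. Because $\pi_\alpha(a_\alpha)\neq 0$ by hypothesis (1), the element $a_\alpha$ cannot belong to this set. As $\alpha<\kappa$ was arbitrary, $(a_\alpha)_{\alpha<\kappa}$ is a \cc of size $\kappa$.

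Regarding the main obstacle: there is essentially no deep difficulty. The algebraic vanishing is immediate from multiplicativity of $\pi_\alpha$, and the only point requiring a moment's care is passing from the generating combinations to their norm closure, which is handled by the automatic continuity of C*-representations. The conceptual content is simply the observation that a representation behaves well with respect to C*-(absolutely) convex combinations, which is precisely what makes it the right tool for separating a point from a C*-convex hull.
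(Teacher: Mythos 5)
Your proof is correct and follows essentially the same route as the paper: apply $\pi_\alpha$ to the finite C*-convex combinations generating $MCL(\{a_\beta:\beta\neq\alpha\}\cup\{0\})$, use multiplicativity and hypothesis (2) to see they are annihilated, and use contractivity of the representation to pass to the norm closure. The paper merely phrases this as a contradiction with the explicit choice $\varepsilon = \|\pi_\alpha(a_\alpha)\|/2$, which is a cosmetic difference from your direct argument.
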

\begin{proof}
Suppose that $(a_\alpha)_{\alpha<\kappa}$ is not a \cc and let's get a contradiction.
Fix $\alpha<\kappa$ such that $a_\alpha \in MCL(\{a_\beta: \beta\neq \alpha\}\cup\{0\})$.
Given $\varepsilon = \frac{\|\pi_\alpha(a_\alpha)\|}{2}$, there is $a_{\beta_1}, \cdots, a_{\beta_n}\subset \{a_\beta: \beta\neq \alpha\}\cup\{0\}$ and $A_1, \cdots, A_n$ with $\sum_{i}A_i^* A_i = 1$ and 
$$\|a_\alpha - \sum_i A_i^* a_{\beta_i} A_i\|\leq \varepsilon$$
Since $$\pi_\alpha(\sum_i A_i^* a_{\beta_i} A_i) = \sum_i \pi_\alpha(A_i)^* \pi_\alpha(a_{\beta_i})\pi_\alpha(A_i) = 0,$$
we have that
$$\|\pi_\alpha(a_\alpha)\| = \|\pi_\alpha(a_\alpha) - \pi_\alpha(\sum_i A_i^* a_{\beta_i} A_i)\|\leq \| \pi_\alpha(a_\alpha - \sum_i A_i^* a_{\beta_i} A_i)\|\leq \varepsilon = \frac{\|\pi_\alpha(a_\alpha)\|}{2}$$
which is a contradiction with the fact that $\pi_\alpha(a_\alpha)\neq 0$.
\end{proof}

\begin{rem}
Let $\A = \oplus_{\alpha<\omega_1}K(\ell_2)$. Then $\A$ is a nonseparable C*-algebra. Fix a nonzero compact operator $T\in K(\ell_2)$ and consider the element $a_\alpha = (0, \cdots, T, 0, \cdots)$ for each $\alpha<\omega_1$. Let $p_\alpha: \A\to K(\ell_2)\subset B(\ell_2)$ be the canonical projection onto the $\alpha$-th coordinate.
Then, by Lemma \ref{sufficient_representations}, $(a_\alpha)_{\alpha<\omega_1}$ is an uncountable \cc in $\A$.
\end{rem}

We have seen that the notion of polyhedron is associated with the existence of a particular family of functionals (see Proposition \ref{ubabs_polyhedron}).  We will prove that a similar result holds in the case of \cc.

\begin{defn}Let $\A$ be a C*-algebra. Consider $\omega$ a state on $\A$ and denote by $\pi_\omega: \A \to H_\omega$ the GNS-representation associated with $\omega$. A function $\varphi : \A \to B(H_\omega)$ is a homomorphism of $\A$-modules if $\varphi$ is a linear map and $\varphi(axb) = \pi_\omega(a) \varphi(x) \pi_\omega(b)$ for every $a,x,b\in \A$.

\end{defn}

\begin{lem}\label{counting}Let $\A$ be a C*-algebra and $\{a_1, a_2, \cdots, a_n\}\subset \A$ . Consider a state $\omega$ and a homomorphism $\varphi : \A \to B(H_\omega)$ of $\A$-module such that\footnote{Given an element $a$ in a C*-algebra $\A$, $Re(a)$ denotes the real part of $a$, defined by $Re(a) = \frac{a + a^*}{2}$.} $Re(\varphi(a_i))\leq I$ for every $i=1, \cdots, n$. Then for every $A_1, \cdots, A_n \in \A$ with $\sum_i A_i^* A_i = 1$ we have that $Re(\varphi(\sum_i A_i^* a_i A_i))\leq I$.

\end{lem}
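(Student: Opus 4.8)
The plan is to use the homomorphism property of $\varphi$ together with the positivity assumption $Re(\varphi(a_i))\leq I$ to control the real part of $\varphi$ on the C*-convex combination $\sum_i A_i^* a_i A_i$. First I would apply linearity and the $\A$-module homomorphism property to compute directly:
\begin{equation*}
\varphi\left(\sum_{i=1}^n A_i^* a_i A_i\right) = \sum_{i=1}^n \pi_\omega(A_i)^* \varphi(a_i)\pi_\omega(A_i).
\end{equation*}
Here I use that $\pi_\omega$ is a $*$-representation, so $\pi_\omega(A_i^*) = \pi_\omega(A_i)^*$. Taking real parts (which commute with the linear sum and with the congruence $T\mapsto \pi_\omega(A_i)^* T \pi_\omega(A_i)$, since $(\pi_\omega(A_i)^* T \pi_\omega(A_i))^* = \pi_\omega(A_i)^* T^* \pi_\omega(A_i)$), I obtain
\begin{equation*}
Re\left(\varphi\left(\sum_{i=1}^n A_i^* a_i A_i\right)\right) = \sum_{i=1}^n \pi_\omega(A_i)^* Re(\varphi(a_i))\, \pi_\omega(A_i).
\end{equation*}

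The second step is the key positivity estimate. Since $Re(\varphi(a_i))\leq I$ in $B(H_\omega)$, and since for any operators $T\leq S$ the congruence $B^* T B \leq B^* S B$ preserves the order, I would bound each summand by $\pi_\omega(A_i)^* Re(\varphi(a_i))\pi_\omega(A_i)\leq \pi_\omega(A_i)^* I\, \pi_\omega(A_i) = \pi_\omega(A_i)^*\pi_\omega(A_i) = \pi_\omega(A_i^* A_i)$. Summing over $i$ and using that $\pi_\omega$ is linear and unital (a $*$-representation of a unital C*-algebra sends $1$ to $I$), together with the hypothesis $\sum_i A_i^* A_i = 1$, yields
\begin{equation*}
Re\left(\varphi\left(\sum_{i=1}^n A_i^* a_i A_i\right)\right) \leq \sum_{i=1}^n \pi_\omega(A_i^* A_i) = \pi_\omega\left(\sum_{i=1}^n A_i^* A_i\right) = \pi_\omega(1) = I,
\end{equation*}
which is exactly the desired conclusion.

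I expect the routine part to be the two identities (the module-homomorphism expansion and the order-preserving congruence), both of which follow immediately from the definitions stated earlier in the excerpt. The only point deserving a moment of care is whether $\pi_\omega$ is genuinely unital; if $\A$ is assumed unital throughout (as the standing hypothesis in these sections suggests), then the GNS representation of a state is unital and $\pi_\omega(1)=I$, so no extra argument is needed. The main conceptual obstacle, if any, is simply confirming that taking the real part commutes with the congruence $T\mapsto B^* T B$, which is immediate since $B^* T B$ is mapped to its adjoint $B^* T^* B$ under the $*$-operation. Thus the whole statement reduces to a short computation.
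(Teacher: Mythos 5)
Your proof is correct and takes essentially the same route as the paper's: the paper likewise uses the module property to write $I - Re(\varphi(\sum_i A_i^* a_i A_i)) = \sum_i \pi_\omega(A_i)^*\left(I - Re(\varphi(a_i))\right)\pi_\omega(A_i)$ (using $\pi_\omega(1)=I$ and $\sum_i A_i^*A_i = 1$), and then checks positivity of this sum directly against vectors $\xi \in H_\omega$, which is just an inline verification of the order-preservation under congruence that you cite as a standard fact. There is no gap; the two arguments differ only in presentation.
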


\begin{proof}

Fix $A_1, \cdots, A_n \in \A$ with $\sum_i A_i^* A_i = 1$. Then

\begin{align*}
I - Re(\varphi_\alpha(\sum_i A_i^* a_{i} A_i))) & =I - Re(\sum_i (\varphi_\alpha(A_i^* a_{_i} A_i)))\\
	 & = \pi(1) - Re(\sum_i ((\pi(A_i^*) \varphi_\alpha(a_{_i})\pi(A_i))))\\
     & =\sum_i ((\pi(A_i^*) I \pi(A_i))) - \sum_i ((\pi(A_i^*) Re(\varphi_\alpha(a_{i}))\pi(A_i)))\\
     & =\sum_i ((\pi(A_i^*) (I - Re(\varphi_\alpha(a_{i}))) \pi(A_i)))
\end{align*}

Let us prove that $\sum_i ((\pi(A_i^*) (I - Re(\varphi_\alpha(a_{i}))) \pi(A_i)))$ is positive. To this end, consider $\xi \in H_\omega$. Then 
\begin{align*}
\langle (I - Re(\varphi_\alpha(\sum_i A_i^* a_{\beta_i} A_i))))\xi, \xi\rangle & = \langle \sum_i ((\pi(A_i^*) (I - Re(\varphi_\alpha(a_{i}))) \pi(A_i)))\xi, \xi\rangle\\
	& = \sum_i  \langle \pi(A_i^*) (I - Re(\varphi_\alpha(a_{i}))) \pi(A_i)\xi, \xi\rangle\\
	& = \sum_i  \langle (I - Re(\varphi_\alpha(a_{i}))) \pi(A_i)\xi, \pi(A_i) \xi\rangle
\end{align*}
Since $Re(\varphi(a_i))\leq I$ for every $i=1, \cdots, n$, it follows that  
$$\langle (I - Re(\varphi_\alpha(a_{i}))) \pi(A_i)\xi, \pi(A_i) \xi\rangle\geq 0$$
for every $i=1, \cdots, n$. In particular, $\langle (I - Re(\varphi_\alpha(\sum_i A_i^* a_{\beta_i} A_i))))\xi, \xi\rangle \geq 0$. This proves that $Re(\varphi_\alpha(\sum_i A_i^* a_{\beta_i} A_i))\leq I$.

\end{proof}

\begin{prop}\label{sufficient_condition}Let $\A$ be a C*-algebra and $(a_\alpha)_{\alpha<\kappa}$ a family of points of $\A$. Suppose that for each $\alpha<\kappa$, there is a state $\omega_\alpha$ and a homomorphism of $\A$-module $\varphi_\alpha: \A \to B(H_\omega)$  such that
\begin{enumerate}
\item $Re(\varphi_\alpha(a_\beta))\leq  1$ for all $\beta \neq \alpha$;
\item $Re(\varphi_\alpha(a_\alpha))\not \leq 1$,
\end{enumerate}
Then $(a_\alpha)_{\alpha<\kappa}$ is a \cc of size $\kappa$.
\end{prop}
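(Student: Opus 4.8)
\emph{Proof proposal.} The plan is to argue by contradiction, using the homomorphisms $\varphi_\alpha$ as separating objects in the same way that Lemma \ref{sufficient_representations} used representations, with Lemma \ref{counting} supplying the crucial stability property under C*-convex combinations. Suppose $(a_\alpha)_{\alpha<\kappa}$ is \emph{not} a \cc. Then there is an index $\alpha<\kappa$ with $a_\alpha \in MCL(\{a_\beta:\beta\neq\alpha\}\cup\{0\})$. Recalling the description of $MCL(S)$ established earlier as the norm closure of the set of C*-convex combinations $\sum_i A_i^* X_i A_i$ with $\sum_i A_i^* A_i = 1$ and $X_i\in S$, I would fix a sequence $y_k = \sum_i A_{i,k}^* X_{i,k} A_{i,k}$ of such combinations, with each $X_{i,k}\in\{a_\beta:\beta\neq\alpha\}\cup\{0\}$ and $y_k\to a_\alpha$ in norm.

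Next I would feed these combinations into the state $\omega_\alpha$ and homomorphism $\varphi_\alpha$ provided by the hypothesis. Since $\varphi_\alpha$ is linear we have $\varphi_\alpha(0)=0$, hence $Re(\varphi_\alpha(0))=0\leq I$; combined with hypothesis (1), this shows that every generator $X\in\{a_\beta:\beta\neq\alpha\}\cup\{0\}$ satisfies $Re(\varphi_\alpha(X))\leq I$. Applying Lemma \ref{counting} to each $y_k$ (whose coefficients satisfy $\sum_i A_{i,k}^* A_{i,k}=1$) then gives $Re(\varphi_\alpha(y_k))\leq I$ for every $k$.

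Finally I would pass to the limit. The set $\{T\in B(H_{\omega_\alpha}): Re(T)\leq I\}$ is norm closed, because $T\mapsto Re(T)$ is norm continuous and the positive cone of self-adjoint operators is closed. Provided $\varphi_\alpha$ is norm continuous, $y_k\to a_\alpha$ yields $\varphi_\alpha(y_k)\to\varphi_\alpha(a_\alpha)$, and closedness forces $Re(\varphi_\alpha(a_\alpha))\leq I$, contradicting hypothesis (2). The one point requiring care, and where I expect the only genuine work, is the continuity of $\varphi_\alpha$. This follows from the module identity itself: since $\A$ is unital and the GNS representation $\pi_{\omega_\alpha}$ sends the unit to $I$, setting two factors equal to $1$ gives $\varphi_\alpha(a)=\varphi_\alpha(a\cdot 1\cdot 1)=\pi_{\omega_\alpha}(a)\varphi_\alpha(1)$, whence $\|\varphi_\alpha(a)\|\leq\|a\|\,\|\varphi_\alpha(1)\|$ and $\varphi_\alpha$ is automatically bounded. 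With boundedness established, the closure argument goes through and the contradiction shows that $(a_\alpha)_{\alpha<\kappa}$ is indeed a \cc of size $\kappa$.
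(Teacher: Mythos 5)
Your proof is correct and rests on the same two pillars as the paper's: a contradiction argument in which Lemma \ref{counting} transfers the condition $Re(\varphi_\alpha(\cdot))\leq I$ from the generators $\{a_\beta:\beta\neq\alpha\}\cup\{0\}$ to all C*-convex combinations of them. The packaging of the final step differs slightly: the paper fixes a witness vector $\xi$ with $\langle (I-Re(\varphi_\alpha(a_\alpha)))\xi,\xi\rangle=\eta<0$, takes a single approximant within $\varepsilon<|\eta|/(2\|\varphi_\alpha\|)$ of $a_\alpha$, and shows the quadratic form stays negative, whereas you apply Lemma \ref{counting} along a whole sequence $y_k\to a_\alpha$ and pass to the limit using norm-closedness of $\{T: Re(T)\leq I\}$; this is cleaner but not a different idea. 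What your write-up genuinely adds is the explicit justification that $\varphi_\alpha$ is bounded: the paper's choice of $\varepsilon$ silently presupposes $\|\varphi_\alpha\|<\infty$, while you derive it from the module identity via $\varphi_\alpha(a)=\varphi_\alpha(a\cdot 1\cdot 1)=\pi_{\omega_\alpha}(a)\varphi_\alpha(1)$, giving $\|\varphi_\alpha(a)\|\leq\|a\|\,\|\varphi_\alpha(1)\|$. That observation closes a small gap that both proofs need, and is worth recording.
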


\begin{proof}
Suppose that $(a_\alpha)_{\alpha<\kappa}$ is not a \cc and let's get a contradiction.
Fix $\alpha<\kappa$ such that $a_\alpha \in MCL(\{a_\beta: \beta\neq \alpha\}\cup\{0\})$.
Since $Re(\varphi_\alpha(a_\alpha))\not \leq I$, there is $\xi\in H_\alpha$ such that $$\eta = \langle (I - Re(\varphi_\alpha(a_\alpha)))\xi, \xi\rangle <0$$

Given $\varepsilon < \frac{|\eta|}{2\|\varphi_\alpha\|}$, there is $a_{\beta_1}, \cdots, a_{\beta_n}\subset \{a_\beta: \beta\neq \alpha\}\cup\{0\}$ and $A_1, \cdots, A_n$ with $\sum_{i}A_i^* A_i = 1$ and 
$$\|a_\alpha - \sum_i A_i^* a_{\beta_i} A_i\|\leq \varepsilon$$

Then 
\begin{align*}
\eta  & = \langle (I - Re(\varphi_\alpha(a_\alpha)))\xi, \xi \rangle\\
	  & = \langle I\xi - Re(\varphi_\alpha(a_\alpha))\xi + Re(\varphi_\alpha(\sum_i A_i^* a_{\beta_i} A_i))\xi - Re(\varphi_\alpha(\sum_i A_i^* a_{\beta_i} A_i))\xi , \xi \rangle\\
      & = \langle (I- Re(\varphi_\alpha(\sum_i A_i^* a_{\beta_i} A_i)))\xi  + (Re(\varphi_\alpha(\sum_i A_i^* a_{\beta_i} A_i)) - Re(\varphi_\alpha(a_\alpha)))\xi  , \xi \rangle \\
	 & = \langle  (I - Re(\varphi_\alpha(\sum_i A_i^* a_{\beta_i} A_i)))\xi , \xi \rangle + 
\langle  (Re(\varphi_\alpha(\sum_i A_i^* a_{\beta_i} A_i - a_\alpha)))\xi, \xi \rangle
\end{align*}

Observe that 
$$|\langle  Re(\varphi_\alpha(\sum_i A_i^* a_{\beta_i} A_i - a_\alpha))\xi, \xi \rangle| \leq \|\varphi_\alpha\|\|\sum_i A_i^* a_{\beta_i} A_i - a_\alpha\|\leq |\eta|/2$$
In particular, we should have $$\langle  (I - Re(\varphi_\alpha(\sum_i A_i^* a_{\beta_i} A_i)))\xi , \xi \rangle<0$$
and therefore $Re(\varphi_\alpha(\sum_i A_i^* a_{\beta_i} A_i)))\not \leq I$, which is a contradiction with Lemma \ref{counting}.

\end{proof}

The following noncommutative version of the Hahn-Banach theorem is due to B. Magajna: 

\begin{thm}[\cite{magajna2000c, magajna2016c}]\label{magajna2000c}Let $K$ be a C*-convex set of $\A$ such that $0\in K$ and $x_0 \in \A\setminus K$. Then there exists a state $\omega$ and a homomorphism of $\A$-module $\varphi: \A \to B(H_\omega)$ such that
\begin{enumerate}
\item $Re(\varphi(x))\leq  1$ for all $x\in K$;
\item $Re(\varphi(x_0))\not \leq 1$,
\end{enumerate}
\end{thm}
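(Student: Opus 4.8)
The statement is a noncommutative geometric separation (Hahn--Banach) theorem, so the plan is to imitate the classical argument in two stages: first separate $x_0$ from $K$ at the level of an ordered vector space, and then \emph{upgrade} the resulting scalar/matrix separation to an operator-valued module homomorphism carried by a single GNS representation. For the conclusion to be meaningful one should work with a norm-closed $K$: if $x_0\notin\overline{K}$ we may replace $K$ by $\overline{K}$, which is still C*-convex by Lemma \ref{closure_convex} and still contains $0$; this is exactly the regime in which the theorem is applied, since the sets $MCL(\cdots)$ occurring in Proposition \ref{sufficient_condition} are already closed.

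First I would record the explicit shape of the object to be produced. Writing $\pi=\pi_\omega$ and specializing the bimodule identity $\varphi(axb)=\pi(a)\varphi(x)\pi(b)$ to $x=1,b=1$ and to $a=1,x=1$, one gets $\varphi(a)=\pi(a)\varphi(1)$ and $\varphi(b)=\varphi(1)\pi(b)$, so that $T:=\varphi(1)$ lies in the commutant $\pi(\A)'$ and $\varphi(x)=T\pi(x)=\pi(x)T$ for all $x$. Thus the theorem is equivalent to producing a state $\omega$ and an operator $T\in\pi_\omega(\A)'$ with $Re\bigl(T\pi_\omega(x)\bigr)\leq I$ for all $x\in K$ but $Re\bigl(T\pi_\omega(x_0)\bigr)\not\leq I$; testing against vectors $\eta\in H_\omega$, the first inequality unwinds to the scalar family $Re\langle T\pi_\omega(x)\eta,\eta\rangle\leq\|\eta\|^2$. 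A convenient concrete model for such a $T$ is $T=W^*W$ with $W$ intertwining $\pi_\omega$ and a larger representation, which is precisely the output of a Stinespring-type dilation.

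The core of the argument is the separation itself. Ordinary (scalar) convexity of $K$ is not enough, since C*-convexity is strictly stronger (see Remark \ref{remark1}), so a single linear functional cannot do the job. Instead I would encode C*-convexity through the amplifications of $\A$: C*-convexity says exactly that $K$ is invariant under the module operations $\sum_i A_i^*(\cdot)A_i$ with $\sum_i A_i^*A_i=1$, which is matrix convexity of the associated family over $M_n(\A)$. On this matrix-ordered space one applies the matrix version of geometric Hahn--Banach (Effros--Winkler separation, or Wittstock's Hahn--Banach theorem for operator modules): since $x_0$ is omitted by the C*-convex set, there is a completely bounded $\A$-bimodule map separating $x_0$ from $K$ in the real-part ordering. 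Representing this map through the structure theorem for completely bounded bimodule maps (Stinespring / Christensen--Effros--Sinclair) realizes it over one cyclic representation; taking $\omega$ to be the state of the associated cyclic vector identifies that representation with $\pi_\omega$ and delivers $\varphi(x)=T\pi_\omega(x)$ with the required real-part inequalities, giving $Re(\varphi(x))\leq 1$ on $K$ and $Re(\varphi(x_0))\not\leq 1$.

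The main obstacle is precisely this final upgrade. A naive Hahn--Banach produces only a scalar functional blind to the module structure, so the work lies in running the separation at the matrix level and then representing the resulting completely bounded module functional concretely as $T\pi_\omega(\cdot)$ over a \emph{single} GNS space while preserving the operator inequality $Re(\varphi)\leq I$. Pinning down the state $\omega$ (hence the correct $H_\omega$) simultaneously with the commutant operator $T$, and controlling the real part rather than just the norm, are the delicate points; this is exactly the content of Magajna's module Hahn--Banach machinery, which is why the statement is quoted from \cite{magajna2000c, magajna2016c} rather than reproved in full here.
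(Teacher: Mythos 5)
The paper never proves this statement: it is imported directly from Magajna's work \cite{magajna2000c, magajna2016c}, with no proof environment attached, and your proposal ultimately does the same thing by deferring the separation step to ``Magajna's module Hahn--Banach machinery.'' In that respect you match the paper's treatment, and two of your preliminary observations are correct and worth having: (i) the theorem implicitly requires $K$ to be norm closed --- since any module homomorphism satisfies $\varphi(x)=\varphi(1)\pi_\omega(x)$ with $T:=\varphi(1)\in\pi_\omega(\A)'$, it is automatically bounded, so $Re(\varphi)\leq 1$ on $K$ forces the same on $\overline{K}$; the paper omits the hypothesis but only ever applies the theorem to the closed sets $MCL(\cdot)$ --- and (ii) the reformulation of the conclusion as the existence of a state $\omega$ and an operator $T\in\pi_\omega(\A)'$ with $Re\bigl(T\pi_\omega(x)\bigr)\leq I$ on $K$ but not at $x_0$ is exactly the right way to see what must be built.

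However, the route you sketch between these endpoints would fail if anyone tried to fill it in. Effros--Winkler separation is a theorem about \emph{matrix convex} sets, whose defining combinations $\sum_i\gamma_i^*x_i\gamma_i$ use scalar matrix coefficients $\gamma_i$ with $\sum_i\gamma_i^*\gamma_i=I$; C*-convexity uses coefficients $A_i$ taken from the algebra $\A$ itself, and these notions are genuinely different --- the paper's own first remark exhibits a singleton $\{A\}\subset\M_2(\C)$ that is convex (and trivially closed under scalar combinations) yet fails to be C*-convex because $U^*AU\notin\{A\}$. So ``encoding C*-convexity as matrix convexity of the associated family over $M_n(\A)$'' is not a faithful translation: a matrix-convexity separation neither exploits closure of $K$ under $\A$-valued combinations nor outputs a map with the bimodule property $\varphi(axb)=\pi_\omega(a)\varphi(x)\pi_\omega(b)$, and Wittstock's theorem is an \emph{extension} theorem for module maps, not a separation theorem, so invoking it does not close this hole either. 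What actually does the work in \cite{magajna2000c, magajna2016c} is Magajna's analysis of completely bounded $\A$-bimodule homomorphisms (equivalently, of operators in the commutant $\pi(\A)'$, as in your reformulation) and the duality adapted to module combinations --- precisely the part you leave to the citation. Deferring to the citation is acceptable here, since the paper does exactly that; but your intermediate plan should not be presented as a viable proof outline, because its central step rests on a conflation of two different convexity notions.
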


\begin{thm}\label{th_polyhedron_c}Let $(a_\alpha)_{\alpha<\kappa}$ a family of points in $\A$. Then $(a_\alpha)_{\alpha<\kappa}$is a \cc of size $\kappa$ if and only if for each $\alpha<\kappa$, there exists a state $\omega_\alpha$ and a homomorphism of $\A$-module $\varphi_\alpha: \A \to B(H_{\omega_\alpha})$ such that
\begin{enumerate}
\item $Re(\varphi_\alpha(a_\beta))\leq  1$ for all $\beta\neq \alpha$;
\item $Re(\varphi_\alpha(a_\alpha))\not \leq 1$,
\end{enumerate}
\end{thm}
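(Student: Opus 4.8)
The plan is to prove the two implications separately, observing that essentially all of the analytic work has already been carried out in the preceding results, so that this theorem is really the statement that Magajna's separation principle is the exact converse of the sufficient condition of Proposition \ref{sufficient_condition}.

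For the backward implication, suppose that for each $\alpha<\kappa$ we are given a state $\omega_\alpha$ and a homomorphism of $\A$-modules $\varphi_\alpha:\A\to B(H_{\omega_\alpha})$ satisfying conditions (1) and (2). This is precisely the hypothesis of Proposition \ref{sufficient_condition}, whose conclusion is exactly that $(a_\alpha)_{\alpha<\kappa}$ is a \cc of size $\kappa$. Hence this direction requires no additional argument beyond citing that proposition.

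For the forward implication I would invoke the noncommutative Hahn--Banach theorem of Magajna (Theorem \ref{magajna2000c}). Fix $\alpha<\kappa$ and set $K=MCL(\{a_\beta:\beta\neq\alpha\}\cup\{0\})$. By the definition of $MCL$ as the smallest norm-closed C*-convex set containing its generators, $K$ is a norm-closed C*-convex set, and since $0$ belongs to the generating set we have $0\in K$. The assumption that $(a_\alpha)_{\alpha<\kappa}$ is a \cc says precisely that $a_\alpha\notin K$. Thus the hypotheses of Theorem \ref{magajna2000c} are met with $x_0=a_\alpha$, and we obtain a state $\omega_\alpha$ and a homomorphism of $\A$-modules $\varphi_\alpha:\A\to B(H_{\omega_\alpha})$ with $Re(\varphi_\alpha(x))\leq 1$ for every $x\in K$ and $Re(\varphi_\alpha(a_\alpha))\not\leq 1$. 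The second property is condition (2) immediately. For condition (1), note that each $a_\beta$ with $\beta\neq\alpha$ lies in the generating set of $K$, hence in $K$, so $Re(\varphi_\alpha(a_\beta))\leq 1$ follows from the separation inequality on $K$. Running this for every $\alpha<\kappa$ yields the required family $(\omega_\alpha,\varphi_\alpha)_{\alpha<\kappa}$.

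I do not expect a genuine obstacle at the level of this theorem, since the two real difficulties (the sufficiency computation and the separation theorem) are established elsewhere. The only points demanding care are bookkeeping ones: verifying that $MCL(\{a_\beta:\beta\neq\alpha\}\cup\{0\})$ satisfies the exact hypotheses of Theorem \ref{magajna2000c}, namely that it is C*-convex and contains $0$, and confirming that every $a_\beta$ with $\beta\neq\alpha$ genuinely belongs to it so that the separation bound applies to each such element. Both facts are immediate from the definition of $MCL$, so the proof reduces to matching the conclusion of Magajna's theorem against the hypothesis of Proposition \ref{sufficient_condition}.
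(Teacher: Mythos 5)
Your proof is correct and takes essentially the same approach as the paper: the paper's own proof consists precisely of citing Proposition \ref{sufficient_condition} for the backward implication and Theorem \ref{magajna2000c} for the forward one. Your write-up simply makes explicit the bookkeeping the paper leaves implicit, namely that $MCL(\{a_\beta:\beta\neq\alpha\}\cup\{0\})$ is a closed C*-convex set containing $0$ and each $a_\beta$ with $\beta\neq\alpha$, so Magajna's separation theorem applies with $x_0=a_\alpha$.
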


\begin{proof}
It follows from Proposition \ref{sufficient_condition} and Theorem \ref{magajna2000c}. 
\end{proof}

\section{Commutative C*-algebras and C*-polyhedrons}\label{sec_commutative}

In this section, we prove some results on unital commutative C*-algebra, i.e., C*-algebras of the form $C(K)$, for $K$ a compact Hausdorff space and the existence of C*-polyhedrons.

Let's begin with a result that gives a sufficient condition for the existence of a C*-polyhedron:

\begin{lem}\label{existence_ubabs_commu}Let $K$ be a infinite compact Hausdorff space and suppose $(f_\alpha, \delta_{x_\alpha})_{\alpha<\kappa}$ is a UBABS of type $\eta$, with $\eta<1$. Then $(f_\alpha)_{\alpha<\kappa}$ is a \cc.
\end{lem}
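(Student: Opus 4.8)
The plan is to separate each $f_\alpha$ from the closed C*-absolutely convex hull of the others by means of the point-evaluation (Dirac) state $\delta_{x_\alpha}$, exploiting the fact that in a commutative C*-algebra such a state is a \emph{character}, i.e. a $*$-homomorphism $C(K)\to\C$. The guiding observation is that a character collapses a C*-absolutely convex combination into an ordinary scalar absolutely convex combination: if $\sum_{i=1}^n g_i^* g_i \le 1$ in $C(K)$, then evaluation at $x_\alpha$ gives $\sum_{i=1}^n |g_i(x_\alpha)|^2 \le 1$, so that $\delta_{x_\alpha}\big(\sum_i g_i^* h_i g_i\big)=\sum_i |g_i(x_\alpha)|^2\, h_i(x_\alpha)$ is an absolutely convex combination of the scalars $h_i(x_\alpha)$.

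First I would fix $\alpha<\kappa$ and assume for contradiction that $f_\alpha \in MCL(\{f_\beta:\beta\neq\alpha\}\cup\{0\})$. Unwinding the description of $MCL$ recalled earlier in the excerpt, for every $\varepsilon>0$ there would be $g_1,\dots,g_n\in C(K)$ with $\sum_i g_i^* g_i \le 1$ and $h_1,\dots,h_n\in\{f_\beta:\beta\neq\alpha\}\cup\{0\}$ such that $\|f_\alpha - \sum_i g_i^* h_i g_i\|\le\varepsilon$. Using the key observation above together with the hypothesis $Re(h_i(x_\alpha))\le\eta$ (which holds for $h_i=0$ and, by the UBABS inequality $f_\beta(x_\alpha)\le\eta$, for each $h_i=f_\beta$ with $\beta\neq\alpha$), the crucial estimate is
\[
Re\Big(\delta_{x_\alpha}\big(\textstyle\sum_i g_i^* h_i g_i\big)\Big)=\sum_i |g_i(x_\alpha)|^2\,Re(h_i(x_\alpha))\le \eta\sum_i|g_i(x_\alpha)|^2\le\eta .
\]

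Since $f_\alpha(x_\alpha)=\delta_{x_\alpha}(f_\alpha)=1$, the norm-one functional $\delta_{x_\alpha}$ then forces
\[
\|f_\alpha-\textstyle\sum_i g_i^* h_i g_i\|\ge \big|\delta_{x_\alpha}(f_\alpha)-\delta_{x_\alpha}(\sum_i g_i^* h_i g_i)\big|\ge 1-\eta>0 ,
\]
and choosing $\varepsilon<1-\eta$ gives a contradiction. This shows $dist\big(f_\alpha, MCL(\{f_\beta:\beta\neq\alpha\}\cup\{0\})\big)\ge 1-\eta$, hence $f_\alpha\notin MCL(\{f_\beta:\beta\neq\alpha\}\cup\{0\})$; as $\alpha$ is arbitrary, $(f_\alpha)_{\alpha<\kappa}$ is a \cc. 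Equivalently, one could invoke Proposition \ref{sufficient_condition}: the GNS representation of the character $\delta_{x_\alpha}$ is one-dimensional with $\pi_\alpha(f)=f(x_\alpha)$, so setting $\varphi_\alpha(f)=c\,f(x_\alpha)$ for a real constant $c$ with $1<c$ and $c\eta\le 1$ (which exists because $\eta<1$) defines a homomorphism of $\A$-modules with $Re(\varphi_\alpha(f_\beta))\le 1$ for $\beta\neq\alpha$ and $Re(\varphi_\alpha(f_\alpha))=c>1$.

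The argument is elementary, so there is no serious obstacle; the one point requiring care is the reduction step, namely recognizing that evaluation at $x_\alpha$ turns the noncommutative weights $g_i^*(\cdot)g_i$ into nonnegative scalars $|g_i(x_\alpha)|^2$ of total mass at most $1$, together with the correct reading of the UBABS inequality $f_\beta(x_\alpha)\le\eta$ as a bound on real parts. Once this is in place, the separation distance $1-\eta$ is uniform in $\alpha$, which is exactly what the definition of a \cc demands.
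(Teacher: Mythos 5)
Your proof is correct and follows essentially the same route as the paper: both arguments apply the evaluation functional $\delta_{x_\alpha}$ to collapse a C*-absolutely convex combination $\sum_i g_i^* h_i g_i$ (with $\sum_i g_i^*g_i \le 1$) into a scalar absolutely convex combination, and then use $f_\alpha(x_\alpha)=1$ together with the UBABS bound at $x_\alpha$ to obtain the uniform separation $dist\bigl(f_\alpha, MCL(\{f_\beta:\beta\neq\alpha\}\cup\{0\})\bigr)\ge 1-\eta>0$. The only differences are cosmetic: you argue by contradiction and via real parts where the paper gives a direct norm estimate via moduli, and your explicit inclusion of $0$ among the $h_i$ (plus the alternative via Proposition \ref{sufficient_condition}) is a slightly more careful packaging of the same idea.
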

\begin{proof}
Fix $\alpha<\kappa$ and $\beta_1, \cdots, \beta_n <\kappa$ with $\beta_i\neq \alpha$ for each $1\leq i \leq n$. Let $g_1, \cdots, g_n \in C(K)$ such that $\sum_{i=1}^n g_i^* g_i \leq 1$. Then 

\begin{align*}
\|f_\alpha - \sum_{i=1}^n g_i^* f_{\beta_i} g_i\| & \geq   |f_\alpha(x_\alpha) - \sum_{i=1}^n (g_i^* f_{\beta_i} g_i) (x_\alpha)|\\
												  & \geq   |f_\alpha(x_\alpha)| - |\sum_{i=1}^n g_i^* f_{\beta_i} g_i (x_\alpha)|\\
												  & \geq   1 - \eta \sum_{i=1}^n|g_i(x_\alpha)|^2\\
												  & \geq 1 - \eta
\end{align*}
This shows that $dist(f_\alpha, MCL(\{f_\beta: \beta\neq \alpha\}))\geq 1 - \eta>0$ and therefore 
$${f_\alpha \notin MCL(\{f_\beta: \beta\neq \alpha\})}.$$
\end{proof}

\begin{cor}\label{discrete_ck_poly}Let $K$ be a compact Hausdorff space and suppose $K$ has a discrete set of size $\kappa$. Then $C(K)$ has a \cc of size $\kappa$. In particular, every compact Hausdorff space of weight bigger than continuum has an uncountable \cc.
\end{cor}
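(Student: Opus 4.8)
The plan is to reduce everything to Lemma \ref{existence_ubabs_commu}: it suffices to manufacture, from a discrete set of size $\kappa$ in $K$, a UBABS $(f_\alpha,\delta_{x_\alpha})_{\alpha<\kappa}$ in $C(K)$ of some type $\eta<1$, after which Lemma \ref{existence_ubabs_commu} immediately gives that $(f_\alpha)_{\alpha<\kappa}$ is a \cc of size $\kappa$. So let $D=\{x_\alpha:\alpha<\kappa\}$ be a discrete subspace of $K$; since $\kappa$ is infinite, $K$ is infinite, as required by that lemma. Discreteness means that for each $\alpha$ there is an open set $U_\alpha\subset K$ with $U_\alpha\cap D=\{x_\alpha\}$. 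Because $K$ is compact Hausdorff, hence Tychonoff, I can separate the point $x_\alpha$ from the closed set $K\setminus U_\alpha$ by a continuous function: I choose $f_\alpha\in C(K)$ with $0\le f_\alpha\le 1$, $f_\alpha(x_\alpha)=1$ and $f_\alpha\equiv 0$ on $K\setminus U_\alpha$. For $\beta\neq\alpha$ we have $x_\beta\notin U_\alpha$, so $f_\alpha(x_\beta)=0$.

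With these functions the pairs $(f_\alpha,\delta_{x_\alpha})_{\alpha<\kappa}\subset C(K)\times C(K)^*$ form a UBABS of type $\eta=0$: indeed $\delta_{x_\alpha}(f_\alpha)=f_\alpha(x_\alpha)=1$, while for $\beta\neq\alpha$ we have $\delta_{x_\alpha}(f_\beta)=f_\beta(x_\alpha)=0\le\eta$, and the family is bounded since $\|f_\alpha\|_\infty\le 1$ and $\|\delta_{x_\alpha}\|=1$. Lemma \ref{existence_ubabs_commu} then yields that $(f_\alpha)_{\alpha<\kappa}$ is a \cc of size $\kappa$, establishing the first assertion. This part is entirely routine; the only tool beyond the cited lemma is complete regularity of $K$.

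For the final assertion I must supply the purely topological input that a compact Hausdorff space $K$ with weight $w(K)>2^{\omega}$ carries an \emph{uncountable} discrete subspace. Writing $s(K)$ for the spread of $K$ (the supremum of the cardinalities of its discrete subspaces), I would invoke the classical cardinal-function inequality $w(K)\le 2^{s(K)}$, valid for every compact Hausdorff space. If $K$ had no uncountable discrete subspace, then $s(K)\le\omega$, whence $w(K)\le 2^{\omega}$, contradicting $w(K)>2^{\omega}$; thus $K$ has a discrete subspace of some uncountable size $\kappa$, and the first part produces an uncountable \cc in $C(K)$. I expect this inequality to be the main point requiring care, and I would cite it from the standard theory of cardinal functions rather than reprove it. It is worth stressing that one genuinely needs the direct bound of the weight by the spread: a weaker route through the hereditary Lindel\"of number, using $hL(K)\le 2^{s(K)}$ together with $w(K)\le 2^{hL(K)}$, would only give $w(K)\le 2^{2^{\omega}}$ when $s(K)\le\omega$, which is too weak to reach the threshold $2^{\omega}$.
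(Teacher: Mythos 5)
Your proof is correct and follows essentially the same route as the paper's: you build a UBABS of type $\eta=0$ from the witnessing open sets via point--closed-set separation (the paper invokes Urysohn's lemma, you invoke complete regularity; these coincide here since compact Hausdorff spaces are normal) and then apply Lemma \ref{existence_ubabs_commu}. For the second assertion you use exactly the same cardinal-function inequality $w(K)\leq 2^{s(K)}$ for compact Hausdorff spaces that the paper cites.
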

\begin{proof}
Suppose $(x_\alpha)_{\alpha<\kappa}$ is a discrete set in $K$ witnessed by the family of open sets $(O_\alpha)_{\alpha<\kappa}$, i.e., $x_\beta \in O_\alpha$ if and only if, $\alpha=\beta$.
By the Urysohn lemma, consider for each $\alpha<\kappa$ a function $f_\alpha:K\to [0,1]$ such that $f_\alpha(x_\alpha) = 1$ and $f_\alpha = 0$ outside $O_\alpha$. Then $(f_\alpha,\delta_{x_\alpha})_{\alpha<\kappa}$ is a UBABS of type $\eta = 0$. By Lemma \ref{existence_ubabs_commu}, it follows that $C(K)$ has a C*-polyhedron of size $\kappa$.

For the second part, we use the fact\footnote{Let $X$ be a topological space. We denote by $w(X)$ the topological weight of $X$ and $s(X)$ denotes the topological spread of $X$, which is defined as the supremum of sizes of discrete sets in $X$. See \cite{handbookconjuntos} for definitions and results on cardinal functions.} that $w(K)\leq 2^{s(K)}$ for every compact Hausdorff space (see \cite[Corollary 7.7]{handbookconjuntos}). In particular, every compact Hausdorff space with $w(K)>2^\omega$ has an uncountable discrete set, and therefore, $C(K)$ has an uncountable \cc.

\end{proof}
%\begin{prop}Let $K$ be a compact Hausdorff space and $(f_\alpha)_\alpha$ a family of positive functions in $C(K)$ such that $0 \notin spec(f_\alpha) = Img (f_\alpha)$. If there are $\alpha\neq \beta$ such that $f_\alpha(x)\leq f_\beta(x)$, then $(f_\alpha)_\alpha$ is not a C*-polyhedron.

%\end{prop}
%\begin{proof}
%In fact, consider $\alpha\neq \beta$. Then $h(x) = \sqrt{\frac{f_\alpha(x)}{f_\beta(x)}}$ is an element in $C(K)$ with $\|h\|\leq 1$  and $f_\alpha = h(x)^2 f_\beta$.
%\end{proof}
\begin{rem}
Let $K_{\omega_1} = ([0, \omega_1], \tau_{ord})$ with the order topology. Then $\A = C(K_{\omega_1})$ is a nonseparable commutative C*-algebra with an uncountable discrete set. In particular, by Corollary \ref{discrete_ck_poly}, $\A$ has an uncountable C*-polyhedron.
\end{rem}

In the case of commutative C*-algebras, we will prove that the existence of a large C*-polyhedron is associated with the existence of a large \cc. Before proving this result, we need the following lemma:

\begin{lem}\label{convex_ck_linear_comb}Let $K$ be a compact Hausdorff space and $(f_\alpha)_{\alpha<\kappa}$ a C*-polyhedron in $C(K)$. Consider $g\in C(K)$ and $\lambda\neq 0$. Then $(\lambda f_\alpha - g)_{\alpha<\kappa}$ is a C*-polyhedron.
\end{lem}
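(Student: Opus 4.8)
The plan is to show that the affine map $T\colon C(K)\to C(K)$ given by $T(y)=\lambda y-g$ carries the C*-convex hull of any set onto the C*-convex hull of its image, so that the non-membership witnessing that $(f_\alpha)_{\alpha<\kappa}$ is a C*-polyhedron is transported to $(\lambda f_\alpha-g)_{\alpha<\kappa}$. Since $\lambda\neq 0$, the map $T$ is an affine bijection with inverse $T^{-1}(z)=\lambda^{-1}(z+g)$, and $\|T(y_1)-T(y_2)\|=|\lambda|\,\|y_1-y_2\|$, so $T$ is a homeomorphism of $C(K)$ in the norm topology. The whole argument then reduces to establishing, for each fixed $\alpha<\kappa$, the identity
\[MCL(\{T(f_\beta):\beta\neq\alpha\})=T\bigl(MCL(\{f_\beta:\beta\neq\alpha\})\bigr).\]
Indeed, $T$ is injective, so $f_\alpha\notin MCL(\{f_\beta:\beta\neq\alpha\})$ at once yields $T(f_\alpha)=\lambda f_\alpha-g\notin T(MCL(\{f_\beta:\beta\neq\alpha\}))=MCL(\{T(f_\beta):\beta\neq\alpha\})$, which is exactly what must be shown.

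First I would verify the key computation at the level of C*-convex combinations, which is where the commutativity of $C(K)$ enters. Given $A_1,\dots,A_n\in C(K)$ with $\sum_i A_i^*A_i=1$ and $x_1,\dots,x_n\in C(K)$, one has
\[\sum_i A_i^*T(x_i)A_i=\lambda\sum_i A_i^*x_iA_i-\sum_i A_i^*gA_i=\lambda\sum_i A_i^*x_iA_i-g=T\Bigl(\sum_i A_i^*x_iA_i\Bigr),\]
where the middle equality uses that $g$ is central in $C(K)$, so that $A_i^*gA_i=g\,A_i^*A_i$ and therefore $\sum_i A_i^*gA_i=g\sum_i A_i^*A_i=g$. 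Applying this with $x_i=f_{\beta_i}$ shows that the (not yet closed) C*-convex hull of $\{T(f_\beta):\beta\neq\alpha\}$ coincides exactly with the image under $T$ of the C*-convex hull of $\{f_\beta:\beta\neq\alpha\}$.

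Finally I would pass to closures. Writing $H(S)=\{\sum_i A_i^*X_iA_i:\ \sum_i A_i^*A_i=1,\ X_i\in S\}$ for the unclosed C*-convex hull, the previous step reads $H(\{T(f_\beta):\beta\neq\alpha\})=T(H(\{f_\beta:\beta\neq\alpha\}))$. Since $MCL(S)=\overline{H(S)}$ and the homeomorphism $T$ maps norm closures to norm closures, we obtain
\[MCL(\{T(f_\beta):\beta\neq\alpha\})=\overline{T(H(\{f_\beta:\beta\neq\alpha\}))}=T\bigl(\overline{H(\{f_\beta:\beta\neq\alpha\})}\bigr)=T\bigl(MCL(\{f_\beta:\beta\neq\alpha\})\bigr),\]
which is the identity needed above. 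The only genuinely delicate point is the centrality of $g$ in the displayed computation: this is exactly where the hypothesis $\mathcal{A}=C(K)$ is essential, since in a noncommutative C*-algebra the translation $y\mapsto y-g$ by a non-central element need not preserve C*-convexity, and the statement would in general fail.
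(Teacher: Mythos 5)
Your proof is correct, and it reaches the conclusion by a route that differs in packaging from the paper's, although the two share the same algebraic core. The paper's proof is a direct quantitative estimate: fixing $\alpha$ and an arbitrary finite C*-convex combination $\sum_j h_j^* g_{\beta_j} h_j$ of the shifted elements $g_\beta = \lambda f_\beta - g$, it uses commutativity exactly where you do (to replace $h_j^* g_{\beta_j} h_j$ by $h_j^* h_j g_{\beta_j}$ and to collapse $\sum_j h_j^* h_j\, g$ to $g$) and obtains $\|g_\alpha - \sum_j h_j^* g_{\beta_j} h_j\| = |\lambda|\,\|f_\alpha - \sum_j h_j^* h_j f_{\beta_j}\| \geq |\lambda|\varepsilon$, where $\varepsilon>0$ bounds $dist(f_\alpha, MCL(\{f_\beta : \beta \neq \alpha\}))$ from below; since $MCL$ is the closure of such finite combinations, this yields $g_\alpha \notin MCL(\{g_\beta : \beta\neq\alpha\})$. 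You instead phrase the same commutation identity functorially: the affine map $T(y) = \lambda y - g$ intertwines unclosed C*-convex hulls, is a bi-Lipschitz homeomorphism, hence satisfies $MCL(T(S)) = T(MCL(S))$, and injectivity of $T$ finishes the argument. What the paper's version buys is an explicit uniform separation constant $|\lambda|\varepsilon$ for the new family; what yours buys is generality and a sharper localization of the hypothesis: your computation only uses that $g$ is central (scalars commute with everything automatically), so it actually proves that in \emph{any} unital C*-algebra, translation by a central element together with scaling by $\lambda\neq 0$ preserves C*-polyhedra. This is a genuine (if mild) strengthening of the lemma, consistent with the remark of Loebl and Paulsen quoted in the paper's preliminaries that $\mathcal{X} + \lambda I$ is C*-convex if and only if $\mathcal{X}$ is.
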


\begin{proof}
For each $\alpha<\kappa$, define $g_\alpha := \lambda f_\alpha - g$. Let us prove that $(g_\alpha)_{\alpha<\kappa}$ is a C*-polyhedron.
Fix $\alpha<\kappa$ and $\beta_1, \cdots, \beta_n <\kappa$ with $\beta_i\neq \alpha$ for each $1\leq i \leq n$.
Let $h_1, \cdots, h_n \in C(K)$ such that $\sum_{i=1}^n h_i^* h_i = 1$. Then
\begin{align*}
\|g_{\alpha} - \sum_j h_j^* g_{\beta_j}h_j\| & =  \|g_{\alpha} - \sum_j h_j^*h_j g_{\beta_j}\| \\
                                             & =    \|\lambda f_{\alpha} - g - \sum_j h_j^*h_j (\lambda f_{\beta_j} - g)\| \\
 											  & =   \|\lambda f_{\alpha} - g - (\lambda \sum_j h_j^*h_j f_{\beta_j} - \sum_j h_j^*h_jg)\| \\
										  & =  |\lambda|\|f_{\alpha} - \sum_j h_j^*h_j f_{\beta_j}\|
\end{align*}
Since $(f_\alpha)_{\alpha<\kappa}$ is a C*-polyhedron, there is $\varepsilon>0$ such that ${dist(f_\alpha, MCL(\{f_\beta: \beta\neq \alpha\})>\varepsilon}$. Then 
$$\|g_{\alpha} - \sum_j h_j^* g_{\beta_j}h_j\| = |\lambda|\|f_{\alpha} - \sum_j h_j^*h_j f_{\beta_j}\|\geq  |\lambda|\varepsilon>0$$
This shows that $g_\alpha \notin MCL({g_\beta: \beta\neq \alpha})$.

\end{proof}

\begin{prop}\label{equivalent_ck}Let $K$ be a compact Hausdorff space. Then $C(K)$ has an infinite C*-polyhedron of size $\kappa$ if and only if $C(K)$ has a \cc of size $\kappa$.
\end{prop}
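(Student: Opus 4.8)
The plan is to prove the two implications separately, with the direction \cc $\Rightarrow$ C*-polyhedron being immediate and the converse carrying all the content through a translation argument based on Lemma \ref{convex_ck_linear_comb}.

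First, suppose $(a_\alpha)_{\alpha<\kappa}$ is a \cc of size $\kappa$, so that $a_\alpha \notin MCL(\{a_\beta: \beta\neq\alpha\}\cup\{0\})$ for every $\alpha$. Since $\{a_\beta:\beta\neq\alpha\}\subseteq\{a_\beta:\beta\neq\alpha\}\cup\{0\}$ and $MCL$ is monotone under inclusion, we have $MCL(\{a_\beta:\beta\neq\alpha\})\subseteq MCL(\{a_\beta:\beta\neq\alpha\}\cup\{0\})$, hence $a_\alpha\notin MCL(\{a_\beta:\beta\neq\alpha\})$. Thus $(a_\alpha)_{\alpha<\kappa}$ is a C*-polyhedron of size $\kappa$. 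I note that this half requires no commutativity and works in any unital C*-algebra.

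For the converse, suppose $(f_\alpha)_{\alpha<\kappa}$ is a C*-polyhedron in $C(K)$ with $\kappa$ infinite. I would fix one index $\alpha_0<\kappa$ and set $g_\alpha := f_\alpha - f_{\alpha_0}$. By Lemma \ref{convex_ck_linear_comb}, applied with $\lambda=1$ and $g=f_{\alpha_0}$, the family $(g_\alpha)_{\alpha<\kappa}$ is again a C*-polyhedron; crucially, one of its members is $g_{\alpha_0}=0$. I then claim the subfamily $(g_\alpha)_{\alpha\neq\alpha_0}$ is a \cc. The key observation is that for any $\alpha\neq\alpha_0$ the set occurring in the C*-polyhedron condition for $g_\alpha$ already contains the zero element: the term $g_{\alpha_0}=0$ lies in $\{g_\beta:\beta\neq\alpha\}$, so $\{g_\beta:\beta\neq\alpha\}=\{g_\beta:\beta\neq\alpha,\ \beta\neq\alpha_0\}\cup\{0\}$. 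Consequently the C*-polyhedron condition $g_\alpha\notin MCL(\{g_\beta:\beta\neq\alpha\})$ reads verbatim as the \cc condition $g_\alpha\notin MCL(\{g_\beta:\beta\neq\alpha,\ \beta\neq\alpha_0\}\cup\{0\})$ for the family indexed by $\kappa\setminus\{\alpha_0\}$. Because $\kappa$ is infinite, $|\kappa\setminus\{\alpha_0\}|=\kappa$, so $(g_\alpha)_{\alpha\neq\alpha_0}$ is a \cc of size $\kappa$.

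The only step where something genuine happens is the translation: it is the commutativity of $C(K)$, encoded in Lemma \ref{convex_ck_linear_comb}, that lets a C*-convex combination of the $f_\beta$ be transported to one of the $g_\beta$ and back. The point I would verify most carefully is that adjoining the single element $0$ (realized as the image $g_{\alpha_0}$ of the base point) is exactly what upgrades the C*-convex hull to the C*-absolutely convex hull, so the forward translation automatically yields a \cc rather than merely a C*-polyhedron. No size is lost since $\kappa$ is infinite, in direct analogy with the passage from a polyhedron to a 0-polyhedron in the Banach space setting discussed in Section \ref{preliminares}.
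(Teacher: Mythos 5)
Your proposal is correct and follows essentially the same route as the paper: the forward direction by monotonicity of $MCL$, and the converse by translating the family by a fixed member via Lemma \ref{convex_ck_linear_comb} (with $\lambda=1$) and observing that the resulting zero element makes the C*-polyhedron condition for the remaining indices coincide with the \cc condition. The paper's own, terser proof does exactly this with base point $f_0$; your write-up just spells out the identification $\{g_\beta:\beta\neq\alpha\}=\{g_\beta:\beta\neq\alpha,\ \beta\neq\alpha_0\}\cup\{0\}$ that the paper leaves implicit.
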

\begin{proof}
Every \cc is in particular a C*-polyhedron. Suppose now that $(f_\alpha)_{\alpha<\kappa}$ is a C*-polyhedron. By Lemma \ref{convex_ck_linear_comb}, $(f_\alpha - f_0)_{\alpha<\kappa}$ is also a C*-polyhedron. Since $0$ is an element of $(f_\alpha - f_0)_{\alpha<\kappa}$, it follows that $(f_\alpha - f_0)_{\alpha<\kappa, \alpha\neq 0}$ is a C*-polyhedron*.

\end{proof}

%\begin{lem}Let $K$ be a compact Hausdorff space and $(f_\alpha)_\alpha$ an  uncountable C*-polyhedron. Then there is an uncountable C*-polyhedron $(g_\alpha)_\alpha$ such that $0 \notin Img(g_\alpha)$.
%\end{lem}

%\begin{proof}
%We can assume that there is $M>0$ such that $\|f_\alpha\|< M$.

%Define $g_\alpha = M + f_\alpha$. Since $\|f_\alpha\| < M$, we have that $|g_\alpha(x)| = |M + f_\alpha(x)|\geq M - |f_\alpha(x)| > M -  M= 0$.
%\end{proof}

Now we turn our attention to the existence of an uncountable \cc in nonseparable C*-algebras of the form $C(K)$. We will relate some topological properties of $K$ and the existence of some special types of \cc in $C(K)$.

Observe that if $K$ is connected, then $C(K)$ has no nontrivial projections. In particular, $C(K)$ does not have an infinite \cc of projections. In the case of disconnected spaces, we have the following:

\begin{prop}\label{nonexistence_projections_ck}Let $K$ be a compact Hausdorff space. If $K$ is hereditarily Lindeloff, then $C(K)$ does not admit an uncountable C*-polyhedron of projections.

\end{prop}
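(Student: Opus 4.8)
The plan is to reduce the statement to a combinatorial fact about clopen sets. A projection $p\in C(K)$ satisfies $p=p^*=p^2$, so it is real-valued with $p(x)\in\{0,1\}$, i.e. $p=\chi_U$ for a clopen $U\subseteq K$; thus an uncountable C*-polyhedron of projections is exactly an uncountable family $(U_\alpha)_{\alpha<\omega_1}$ of clopen sets with $\chi_{U_\alpha}\notin MCL(\{\chi_{U_\beta}:\beta\neq\alpha\})$ for all $\alpha$. Since $C(K)$ is commutative, a C*-convex combination $\sum_i A_i^*\chi_{U_{\beta_i}}A_i$ equals $\sum_i h_i\chi_{U_{\beta_i}}$ with $h_i=A_i^*A_i\geq 0$ and $\sum_i h_i=1$. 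The only thing I need is a sufficient condition for membership in $MCL$: if there is a finite $F\subseteq\omega_1\setminus\{\alpha\}$ with $U_\alpha\subseteq\bigcup_{\beta\in F}U_\beta$ and $\bigcap_{\beta\in F}U_\beta\subseteq U_\alpha$, then $\chi_{U_\alpha}\in MCL(\{\chi_{U_\beta}:\beta\neq\alpha\})$. I would prove this by passing to the finite clopen partition generated by $\{U_\beta:\beta\in F\}$ and, on each atom $E$, choosing one index $i$ with $E\subseteq U_{\beta_i}$ when $E\subseteq U_\alpha$ (possible by the first inclusion) and one index with $E\cap U_{\beta_i}=\emptyset$ when $E\cap U_\alpha=\emptyset$ (possible by the second inclusion). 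Setting the chosen $h_i$ equal to $1$ on $E$ yields a continuous partition of unity with $\sum_i h_i\chi_{U_{\beta_i}}=\chi_{U_\alpha}$ exactly, so the goal becomes: produce a single $\alpha$ admitting such an $F$.

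For the covering inclusion, observe that $\bigcup_\alpha U_\alpha$ is an open, hence Lindel\"of, subspace, so the cover $\{U_\alpha\}$ has a countable subcover indexed by some countable $C\subseteq\omega_1$, whence $\bigcup_\alpha U_\alpha=\bigcup_{\beta\in C}U_\beta$. For every $\alpha\notin C$ the clopen, hence compact, set $U_\alpha$ is then covered by finitely many $U_\beta$ with $\beta\in C$, giving a finite $F_0\subseteq\omega_1\setminus\{\alpha\}$ with $U_\alpha\subseteq\bigcup_{\beta\in F_0}U_\beta$. Since enlarging $F$ only shrinks $\bigcap_{\beta\in F}U_\beta$ while preserving the covering inclusion, it remains to secure the second inclusion for some $\alpha$ in the uncountable set $D:=\omega_1\setminus C$.

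The intersection inclusion $\bigcap_{\beta\in F}U_\beta\subseteq U_\alpha$ for some finite $F\subseteq\omega_1\setminus\{\alpha\}$ is equivalent, by compactness and the finite intersection property applied to the closed sets $U_\beta\cap(K\setminus U_\alpha)$, to $\bigcap_{\beta\neq\alpha}U_\beta\subseteq U_\alpha$. If this fails for every $\alpha\in D$, then for each such $\alpha$ there is a point $x_\alpha\in\bigcap_{\beta\neq\alpha}U_\beta\setminus U_\alpha$; consequently $x_\gamma\in U_\alpha$ for every $\gamma\neq\alpha$ while $x_\alpha\notin U_\alpha$, so $K\setminus U_\alpha$ is an open neighbourhood of $x_\alpha$ containing no other $x_\gamma$. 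Hence $\{x_\alpha:\alpha\in D\}$ is an uncountable discrete subspace of $K$, contradicting that $K$ is hereditarily Lindel\"of, which forces countable spread. Therefore the second inclusion holds for some $\alpha\in D$, where the first already holds; combining the two finite families yields an $F$ witnessing $\chi_{U_\alpha}\in MCL(\{\chi_{U_\beta}:\beta\neq\alpha\})$, so $(U_\alpha)_{\alpha<\omega_1}$ is not a C*-polyhedron.

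The main obstacle is exactly this second (intersection) inclusion. The covering inclusion is routine, using only Lindel\"ofness of the open set $\bigcup_\alpha U_\alpha$ and compactness of each clopen $U_\alpha$. The crux is recognising that the uniform failure of the intersection inclusion produces an uncountable discrete family via the finite-intersection-property argument, and it is precisely there that hereditary Lindel\"ofness, through the bound on spread, is invoked.
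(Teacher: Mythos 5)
Your proposal is correct up to one small repair, and it is in fact more complete than the paper's own argument. The first half coincides with the paper: both proofs use hereditary Lindel\"ofness to extract a countable subfamily covering $\bigcup_\alpha U_\alpha$, then compactness of the clopen set $U_\alpha$ to get a finite family $F_0$ with $U_\alpha\subseteq\bigcup_{\beta\in F_0}U_\beta$. The paper essentially stops there: it sets $B_i=O_\alpha\cap(O_{\alpha_i}\setminus\bigcup_{j\neq i}O_{\alpha_j})$, $q_i=\chi_{B_i}$, and asserts $p_\alpha=\sum_i q_i p_{\alpha_i}q_i$. But that combination has $\sum_i q_i^*q_i=\chi_{\bigcup_i B_i}\leq\chi_{O_\alpha}$, which is not $1$ unless $O_\alpha=K$, so by the paper's own definition of $MCL$ it only witnesses $p_\alpha\in MCL(\{p_\beta:\beta\neq\alpha\}\cup\{0\})$; as written it rules out an uncountable C$^*_0$-polyhedron of projections rather than a C*-polyhedron. (It also has a disjointification slip: a point of $O_\alpha$ lying in two or more of the $O_{\alpha_i}$ belongs to no $B_i$, so the displayed identity can fail.) Your second step --- the reduction, via compactness and the finite intersection property, of the finite intersection inclusion to $\bigcap_{\beta\neq\alpha}U_\beta\subseteq U_\alpha$, and the observation that its failure on an uncountable index set yields an uncountable discrete subspace, contradicting the countable spread of a hereditarily Lindel\"of space --- is precisely what is needed to make the coefficients sum to exactly $1$, and hence to prove the proposition as stated. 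That is a genuine gain over the paper's proof, at the cost of invoking hereditary Lindel\"ofness a second time.

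The one gap on your side is in the membership lemma: the atoms of the clopen algebra generated by $\{U_\beta:\beta\in F\}$ alone need not be contained in or disjoint from $U_\alpha$, so your case distinction is not exhaustive, and on a straddling atom no constant-on-atoms choice of coefficients can give exact equality. The fix is immediate: refine the partition by adding the clopen set $U_\alpha$ itself to the generating family; then every atom is either inside $U_\alpha$ or disjoint from it, and your two inclusions supply the required index in each case. Equivalently, observe that the clopen sets $G_\beta=\{x\in K:\chi_{U_\alpha}(x)=\chi_{U_\beta}(x)\}$, $\beta\in F$, cover $K$ exactly because of your two inclusions; disjointifying them into clopen sets $E_\beta$ and taking $h_\beta=\chi_{E_\beta}$ gives $\sum_\beta h_\beta=1$ and $\sum_\beta h_\beta\chi_{U_\beta}=\chi_{U_\alpha}$ exactly.
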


\begin{proof}Suppose $K$ is a hereditarily Lindeloff space and let $(p_\alpha)_{\alpha<\omega_1}\subset C(K)$ be an uncountable family of projections.
For each $\alpha<\omega_1$, since  $p_\alpha$ is a projection in $C(K)$, there is a clopen set $O_\alpha$ of $K$ such that ${p_\alpha = \chi_{O_\alpha}}$ is the characteristic function on $O_\alpha$.

Define $L := \bigcup \{O_\alpha: \alpha<\omega_1\}$. Since $K$ is hereditarily Lindeloff, there is a countable subset of index $(\alpha_i)_{i\in \mathbb{N}}$ such that $L = \bigcup \{ O_{\alpha_i}: i\in \mathbb{N}\}$.

Fix $\alpha \neq \alpha_i$ for every $i\in \mathbb{N}$.  Since $O_\alpha$ is compact and $O_\alpha \subset L = \bigcup_i^\infty O_{\alpha_i}$, it follows that we can find $k\in \mathbb{N}$ such that 
$O_\alpha \subset \bigcup_i^k O_{\alpha_i}$. 
For each $1\leq i \leq k$, define $B_i = O_\alpha \cap (O_{\alpha_i}\setminus \bigcup_{j\neq i} O_{\alpha_j})$ and the projection $q_i = \chi_{B_i}$.
Then 
%$$\chi_{O_\alpha} = A_1 \chi_{O_{\alpha_1}} + \cdots +  A_k \chi_{O_{\alpha_k}} + A *0$$
$$p_{\alpha} = q_1 p_{\alpha_1}q_1  + \cdots + q_k p_{\alpha_k} q_k$$

In particular, $(p_\alpha)_{\alpha<\omega_1}$ is not a C*-polyhedron.

\end{proof}

The following remark shows an example of a C*-algebra of the form $\A = C(K)$, such that $\A$ has an uncountable polyhedron of projections, while it does not contain any uncountable C*-polyhedron of projections:

\begin{rem}[Double Arrow space]\label{double_arrow}Consider $X = 2^\omega \times \{0,1\}$ equipped with the lexicography order and let $\tau_{ord}$ the order topology on $X$. The topological space $\mathcal{K} = (X, \tau_{ord})$ is the classical Double arrow space. 
The Double arrow space $\mathcal{K}$ is a compact Hausdorff space which is hereditarily separable and hereditarily Lindel\"of. 

For each $x\in 2^\omega$, let $f_{x}$ be the characteristic function of the open interval $[(0^\omega,0),(x,1))$, where $0^\omega: \mathbb{N}\to 2$ denotes the zero function, and consider the state $\tau_x \in C(K)^*$ defined by $\tau_x = \delta_{(x,0)} - \delta_{(x,1)}$.

Then $\tau_x (f_x) = f_x(x,0) - f_x (x,1) = 1$ and $\tau_x (f_y) = f_y(x,0) - f_y (x,1) = 0$. 

In particular, $(f_x)_{x\in 2^\omega}\cup\{0\}$ is a polyhedron of projections. On the other hand, since $X$ is hereditarily Lindel\"of, it follows by Proposition \ref{nonexistence_projections_ck} that $C(X)$ does not have an uncountable C*-polyhedron of projections.

\end{rem}

\begin{thm}\label{main_ck}Let $K$ be a compact Hausdorff space.
\begin{enumerate}
\item If $K$ is hereditarily Lindeloff, then $C(K)$ does not have an uncountable C*-polyhedron of projections.
\item If $K$ is not hereditarily Lindeloff then:
\begin{enumerate}
\item Under CH, there is a nonmetrizable scattered compact Hausdorff space $K$, such that $C(K)$ is not hereditarily Lindeloff and $C(K)$ does not have an uncountable C*-polyhedron.
\item Under PFA, every $C(K)$ with $K$ nonmetrizable (0-dimensional) compact Hausdorff space has an uncountable noncommutative C*-polyhedron (of projection).
\end{enumerate}

\end{enumerate}

\end{thm}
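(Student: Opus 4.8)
The plan is to prove the three assertions separately, since the statement is essentially a compilation of the commutative theory developed above together with two independent set-theoretic inputs. Assertion (1) requires no new work: it is exactly Proposition \ref{nonexistence_projections_ck}, so I would simply invoke it, recording that the argument there in fact produces, for every uncountable family of scalar projections in a hereditarily Lindel\"of $K$, an explicit C*-convex combination exhibiting one projection inside the $MCL$-hull of the others.

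For assertion (2a) I would exhibit a Kunen-line space. Under CH, the construction discussed in \cite{negrepontis} and used in \cite{granero2003kunen} yields a scattered, nonmetrizable compact $K$ which is an S-space, i.e. hereditarily separable but not Lindel\"of, and in particular not hereditarily Lindel\"of, for which $C(K)$ carries no uncountable UBABS. By Proposition \ref{ubabs_polyhedron} this means $C(K)$ has no uncountable polyhedron at all, and since every C*-polyhedron is a fortiori a classical polyhedron, $C(K)$ has no uncountable C*-polyhedron. The only delicate point is to check that the same space can be arranged to be scattered, nonmetrizable and non-hereditarily-Lindel\"of while simultaneously defeating all uncountable biorthogonal systems; these properties are read off the successive levels of the transfinite Kunen-line construction, so I would quote the construction and verify the four features one by one.

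For assertion (2b) I would run a dichotomy on the spread of the (nonmetrizable, $0$-dimensional, compact) space $K$. If $K$ has an uncountable discrete subspace, then $0$-dimensionality lets me replace the Urysohn functions of Corollary \ref{discrete_ck_poly} by characteristic functions of clopen sets, i.e. genuine projections, so $C(K)$ already carries an uncountable (commutative) C*-polyhedron of projections. If instead $s(K)=\omega$, then under PFA there are no S-spaces and $K$ is forced to be hereditarily Lindel\"of and hereditarily separable yet nonmetrizable, the double arrow of Remark \ref{double_arrow} being the prototype; here Proposition \ref{nonexistence_projections_ck} rules out any uncountable C*-polyhedron of \emph{scalar} projections, so noncommutativity must be used. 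I would pass to $M_2(C(K))=C(K,M_2)$, feed Todorcevic's theorem \cite{quocientstevo} (every nonseparable $C(K)$ has an uncountable biorthogonal system under PFA) and localize it, for $0$-dimensional $K$, to a system $(\chi_{C_\alpha},\mu_\alpha)$ of clopen sets with $\mu_\alpha(C_\beta)=\delta_{\alpha\beta}$ and discretely supported functionals of double-arrow type $\mu_\alpha=\delta_{x_\alpha}-\delta_{y_\alpha}$; each pair $(x_\alpha,y_\alpha)$ would then be encoded into a rank-one matrix projection $P_\alpha\in M_2(C(K))$ whose two limiting values across the pair are mutually orthogonal, and the separation would be verified through the homomorphism criterion of Theorem \ref{th_polyhedron_c}.

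The hard part is exactly this last construction, and the reason is structural. In $C(K)$, and even in $M_n(C(K))$, the irreducible representations are point evaluations, and the values of a continuous function at distinct points are independent; hence no functional or $\A$-module homomorphism supported on finitely many points can separate $P_\alpha$ from the C*-convex hull of the remaining $P_\beta$, which is precisely why the double arrow fails to be a C*-polyhedron of scalar projections. The separating module homomorphisms $\varphi_\alpha$ must therefore be genuinely global objects built from the GNS representation of a spread-out state, and the crux is to show that the matrix encoding together with the biorthogonality $\mu_\alpha(C_\beta)=\delta_{\alpha\beta}$ forces $Re(\varphi_\alpha(P_\alpha))\not\leq 1$ while $Re(\varphi_\alpha(P_\beta))\leq 1$ for every $\beta\neq\alpha$. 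I expect this verification, rather than the dichotomy itself, to be where the PFA hypothesis is genuinely consumed, since along the Kunen line of part (2a) no such global system can exist and the conclusion indeed fails there.
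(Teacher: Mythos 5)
Your treatment of (1) and (2a) coincides with the paper's: (1) is a direct citation of Proposition \ref{nonexistence_projections_ck}, and (2a) is the same Kunen-space argument (under CH the space is nonmetrizable, not hereditarily Lindel\"of, and $C(K)$ has no uncountable polyhedron, hence no uncountable C*-polyhedron, via Proposition \ref{ubabs_polyhedron}).

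The problem is (2b), where you have misread the scope of the statement. Item (b) sits inside item (2), whose standing hypothesis is that $K$ is \emph{not} hereditarily Lindel\"of. Under PFA, every non-hereditarily-Lindel\"of compact Hausdorff space has an uncountable discrete subspace (\cite[Theorem 8.9]{todorcevic1989partition}, which is exactly what the paper cites), so the first branch of your dichotomy --- uncountable discrete set, clopen witnessing neighbourhoods by $0$-dimensionality, characteristic functions of clopen sets, Corollary \ref{discrete_ck_poly} --- is already the entire proof; this is precisely the paper's argument. Your second branch ($s(K)=\omega$, so under PFA $K$ is hereditarily Lindel\"of and double-arrow-like) is vacuous under the hypothesis, yet you single it out as ``the hard part'' and leave it as an unfinished plan, so as written your proof of (2b) is incomplete by your own account.

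Moreover, that second-branch plan could not be carried out as stated even if it were needed. Whether a hereditarily Lindel\"of nonmetrizable compact $K$ admits an uncountable C*-polyhedron is exactly the paper's open Question \ref{question_lindeloff}, so a routine verification there is not to be expected; and your proposed projections are built in $M_2(C(K))$, which is a different C*-algebra from $C(K)$, so a C*-polyhedron there would not produce one in $C(K)$ --- the paper explicitly observes that these notions do not transfer between an algebra and a superalgebra. Reading the hypothesis correctly collapses your argument to its first branch plus the cited PFA theorem, which is correct and is what the paper does.
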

\begin{proof}$ $
\begin{enumerate}
\item It follows from Proposition \ref{nonexistence_projections_ck}.
\item 
\begin{enumerate}
\item Under CH, consider $K$ the Kunen space (see \cite{negrepontis}). Then $K$ is nonmetrizable and $K$ is not hereditarily Lindeloff. But $C(K)$ does not have an uncountable polyhedron and therefore, it does not contain an uncountable C*-polyhedron.
\item Assuming PFA, every nonmetrizable non hereditarily Lindeloff compact Hausdorff space $K$ has an uncountable discrete set  (see \cite[Theorem 8.9]{todorcevic1989partition}). Then the result follows from Corollary \ref{discrete_ck_poly}.
\end{enumerate}
\end{enumerate}
\end{proof}

\begin{cor}The question whether there is an uncountable C*-polyhedron in $C(K)$, where  $K$ is a 0-dimensional nonmetrizable non hereditarily Lindeloff compact Hausdorff space, is independent from ZFC.

\end{cor}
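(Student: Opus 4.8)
The plan is to read this corollary simply as the packaging of the two independence clauses of Theorem \ref{main_ck}(2) into a single undecidability statement, so the proposal amounts to matching quantifier structures rather than proving anything new. Let $(\ast)$ denote the assertion: for every $0$-dimensional nonmetrizable non hereditarily Lindel\"of compact Hausdorff space $K$, the algebra $C(K)$ has an uncountable C*-polyhedron. To show $(\ast)$ is independent of ZFC, I would exhibit one model of ZFC in which $(\ast)$ holds and another in which $(\ast)$ fails, each coming from one half of Theorem \ref{main_ck}.

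For the failure of $(\ast)$, I would work under CH and invoke Theorem \ref{main_ck}(2)(a), which supplies the Kunen space $K$: it is nonmetrizable, scattered, not hereditarily Lindel\"of, and $C(K)$ admits no uncountable polyhedron. Here I must check that $K$ actually lies in the stated class, i.e.\ that it is $0$-dimensional; this is automatic, since a scattered compact Hausdorff space is totally disconnected and hence $0$-dimensional. Because every C*-polyhedron is in particular a polyhedron (noted after the definition of C*-polyhedron), the absence of an uncountable polyhedron forces the absence of an uncountable C*-polyhedron. Thus, under CH, this $K$ is a counterexample to $(\ast)$, so $(\ast)$ is not a theorem of ZFC.

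For the validity of $(\ast)$, I would work under PFA and invoke Theorem \ref{main_ck}(2)(b): by Todorcevic's theorem that under PFA every nonmetrizable non hereditarily Lindel\"of compact Hausdorff space carries an uncountable discrete set, Corollary \ref{discrete_ck_poly} yields an uncountable C*-polyhedron in $C(K)$ for \emph{every} such $K$. Hence $(\ast)$ holds under PFA, so $\neg(\ast)$ is not a theorem of ZFC either. Combining the two models gives the independence of $(\ast)$ from ZFC.

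I do not expect a serious obstacle, since the mathematical content is already isolated in Theorem \ref{main_ck} and the cited results of Kunen and Todorcevic. The only points requiring genuine care are bookkeeping ones: first, confirming that the CH witness is $0$-dimensional (via scattered $\Rightarrow$ $0$-dimensional) so that it belongs to the class named in the corollary; and second, correctly reading the asymmetric quantifiers, so that PFA is used to obtain the universal positive statement ``for all such $K$'' while CH is used only to produce a single counterexample. I would also record the standard caveat that the consistency of PFA is established relative to large cardinals, whereas CH is outright consistent with ZFC; this does not affect the independence conclusion but should be acknowledged.
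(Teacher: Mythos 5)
Your proposal is correct and follows essentially the same route the paper intends: the corollary is just the conjunction of the two halves of Theorem \ref{main_ck}(2), with CH (via the Kunen space) refuting the universal statement and PFA (via Todorcevic's discrete-set theorem and Corollary \ref{discrete_ck_poly}) establishing it. Your explicit check that the scattered Kunen space is $0$-dimensional (scattered compact Hausdorff $\Rightarrow$ totally disconnected $\Rightarrow$ $0$-dimensional) is a detail the paper leaves implicit, and it is indeed needed for the CH witness to lie in the class named in the corollary.
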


We finish this section with some open questions. First, we have seen in Proposition \ref{nonexistence_projections_ck} that the hereditarily Lindeloff property is sufficient to eliminate uncountable C*-polyhedrons of projections. We can ask what is the situation on general C*-polyhedrons:

\begin{quest}\label{question_lindeloff}
Let $K$ be a hereditarily Lindeloff compact Hausdorff space. Is it true that $C(K)$ does not have an uncountable C*-polyhedron?
\end{quest}
Another question is the relation of polyhedron and C*-polyhedron:

\begin{quest}\label{poly_c_poly}Let $\A$ be a C*-algebra. Is it true that $\A$ has a polyhedron of size $\kappa$ if, and only if, $\A$ has a C*-polyhedron (\cc ) of size $\kappa$ ?
\end{quest}

Observe that, if Question \ref{question_lindeloff} has a positive answer, then the Double arrow space (see Remark \ref{double_arrow}) would give a counter-example to Question \ref{poly_c_poly}. 

As we will see in the next section, Question \ref{poly_c_poly} has a negative answer in the noncommutative context.

\section{Noncommutative C*-algebras and C*-polyhedrons }\label{sec_noncommutative}
In this section, we consider the existence of uncountable C*-polyhedron on classical examples of noncommutative C*-algebras.
Let's begin with the C*-algebra $K(H)$ of all compact operators on some Hilbert space $H$.

\begin{lem}\label{unitization}Let $\A$ be a nonunital C*-algebra and $\tilde{\A}$ its unitization. If $\tilde{\A}$ has an uncountable C*-polyhedron (\cc), then $\tilde{\A}$ has one which is formed by elements from $\A$.
\end{lem}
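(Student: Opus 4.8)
The plan is to write each element of the unitization as $x_\alpha = a_\alpha + \lambda_\alpha I$ with $a_\alpha\in\A$ and $\lambda_\alpha\in\mathbb{C}$, and to produce an uncountable subfamily whose scalar parts can be stripped off by a norm‑perturbation that stays inside the given structure. First I would invoke the mechanism from the proof of Lemma \ref{replace_elements}: since $(x_\alpha)_{\alpha<\omega_1}$ is an uncountable C*-polyhedron (resp.\ \cc), each distance $dist(x_\alpha, MCL(\{x_\beta:\beta\neq\alpha\}))$ (resp.\ with $\cup\{0\}$) is positive, so partitioning $\omega_1$ according to $\lceil 1/dist\rceil$ yields an uncountable $\Gamma\subset\omega_1$ and a single $d_0>0$ that bounds all these distances from below uniformly on $\Gamma$.

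The technical engine is a simultaneous‑perturbation estimate. If $A_1,\dots,A_n\in\tilde\A$ satisfy $\sum_i A_i^*A_i\le 1$ and $\|c_i\|\le\delta$ for all $i$, then for unit vectors $\xi,\eta$ the Cauchy--Schwarz inequality gives $|\langle\sum_i A_i^*c_iA_i\,\xi,\eta\rangle|=|\sum_i\langle c_iA_i\xi,A_i\eta\rangle|\le\delta\,\langle\sum_iA_i^*A_i\,\xi,\xi\rangle^{1/2}\langle\sum_iA_i^*A_i\,\eta,\eta\rangle^{1/2}\le\delta$, so $\|\sum_iA_i^*c_iA_i\|\le\delta$. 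Using this three times (on the perturbed target, on the perturbed coefficients, and on the triangle inequality) upgrades the single‑element replacement of Lemma \ref{replace_elements} to a \emph{simultaneous} one: if $(y_\alpha)_{\alpha\in\Gamma}$ is a C*-polyhedron (resp.\ \cc) with all distances $\ge d_0$ and $\|y_\alpha-b_\alpha\|\le\delta$ for every $\alpha\in\Gamma$ with $3\delta<d_0$, then $(b_\alpha)_{\alpha\in\Gamma}$ is again a C*-polyhedron (resp.\ \cc). Indeed, a defect $b_{\alpha_0}\approx\sum_iA_i^*b_{\beta_i}A_i$ would force $\|y_{\alpha_0}-\sum_iA_i^*y_{\beta_i}A_i\|<3\delta<d_0$, contradicting the uniform lower bound.

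For the C*-polyhedron case this closes the argument. Since $\mathbb{C}$ is separable, I cover it by countably many balls of radius $d_0/3$ and pigeonhole an uncountable $\Gamma'\subset\Gamma$ together with a point $\lambda_0$ such that $|\lambda_\alpha-\lambda_0|<d_0/3$ for all $\alpha\in\Gamma'$. Because $MCL(\{x_\beta+\lambda_0 I\})=MCL(\{x_\beta\})+\lambda_0 I$, the translated family $(x_\alpha-\lambda_0 I)_{\alpha\in\Gamma'}$ is still a C*-polyhedron with the same distances $\ge d_0$. Now $\|(x_\alpha-\lambda_0 I)-a_\alpha\|=|\lambda_\alpha-\lambda_0|<d_0/3$, so the simultaneous‑perturbation step replaces $x_\alpha-\lambda_0 I$ by $a_\alpha\in\A$ and shows $(a_\alpha)_{\alpha\in\Gamma'}$ is an uncountable C*-polyhedron consisting of elements of $\A$.

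For the \cc case I would run the same pigeonhole and simultaneous perturbation to move all scalar parts to a \emph{common} value, obtaining an uncountable \cc of the form $(a_\alpha+\lambda_0 I)_{\alpha\in\Gamma'}$. The main obstacle is precisely the last reduction, namely removing the common offset $\lambda_0 I$: unlike C*-convexity, C*-absolute convexity is \emph{not} invariant under scalar translation, because the distinguished point $0$ in $MCL(\,\cdot\cup\{0\})$ is not translated along with the family. (Concretely, $\{1,-1\}\subset\mathbb{C}$ is a \cc while its translate $\{0,-2\}$ is not.) I expect to resolve this by arranging the cluster point to be $0$ itself, so that $|\lambda_\alpha|<d_0/3$ and the perturbation $x_\alpha\mapsto a_\alpha$ applies directly; this amounts to showing that the scalar parts of a \cc in $\tilde\A$ may be taken to accumulate at $0$ on an uncountable set, which is where the essential work lies and which I would secure either by this clustering argument or, alternatively, by transferring the module‑homomorphism witnesses of Theorem \ref{th_polyhedron_c} from $(x_\alpha)$ to $(a_\alpha)$ after subtracting the identity.
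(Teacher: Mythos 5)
Your argument for the C*-polyhedron half is correct, and it is essentially the paper's argument carried out with more care: the paper also passes to a uniform distance bound (Lemma \ref{replace_elements}), pigeonholes the scalar parts over a countable dense subset of $\mathbb{C}$, and then subtracts the common scalar. Your Cauchy--Schwarz estimate $\|\sum_i A_i^* c_i A_i\|\leq \max_i\|c_i\|$ for $\sum_i A_i^*A_i\leq 1$ is in fact a useful addition, since Lemma \ref{replace_elements} as stated only licenses replacing \emph{one} element, while both you and the paper need to replace uncountably many elements simultaneously; your estimate is exactly what legitimizes that step.

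The gap is the \cc case, which you flag but do not close, and your proposed repair cannot work. You correctly observe that C*-absolute convexity is not translation invariant (your example $\{1,-1\}$ versus $\{0,-2\}$ is right), and you propose to fix this by arranging the scalar parts to cluster at $0$. But the scalar parts of an uncountable \cc in $\tilde{\A}$ need not have $0$ as a cluster point of any uncountable subfamily. Take $\A = c_0(\omega_1)$, so that $\tilde{\A} = C(K)$ with $K$ the one-point compactification of discrete $\omega_1$, and put $x_\alpha = e_\alpha + 1$, where $e_\alpha$ is the characteristic function of $\{\alpha\}$. Any element of $MCL(\{x_\beta:\beta\neq\alpha\}\cup\{0\})$ has value at most $\sum_i|g_i(\alpha)|^2\leq 1$ at the point $\alpha$, while $x_\alpha(\alpha)=2$; so $(x_\alpha)_{\alpha<\omega_1}$ is an uncountable \cc whose scalar parts are all equal to $1$. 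Hence on no uncountable subfamily can the cluster point be taken to be $0$ (note the conclusion of the lemma still holds there, since $(e_\alpha)_\alpha$ is itself a \cc --- what fails is your route to it). Your fallback, transferring the module-homomorphism witnesses of Theorem \ref{th_polyhedron_c} from $(x_\alpha)$ to $(a_\alpha)$, is not carried out and is not routine, since those witnesses control combinations of the \emph{translated} generators. To be fair, the paper's own proof performs exactly this translation ($\mathcal{F}''=\mathcal{F}'-\lambda 1$) and tacitly treats both cases alike, so you have isolated a genuine subtlety that the paper glosses over; but identifying it is not resolving it, and as written your proposal proves the lemma only for C*-polyhedra.
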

\begin{proof}
Suppose $\mathcal{F}$ is an uncountable C*-polyhedron in $\tilde{\A}$. 

If $T\in \tilde{\A}$ and $T \not \in \A$, then $T = S + \lambda 1$, where $S\in \A$  and $\lambda \in \mathbb{C}$. By Lemma \ref{replace_elements}, every element $T = S + \lambda 1 \in \mathcal{F}$ can be replaced by some element of the form $T' = S + \lambda' 1$, where $\lambda'$ belongs to some countable dense set of $\mathbb{C}$. In particular, we can find an uncountable C*-polyhedron $\mathcal{F}'$ in $\tilde{\A}$ such that every element $T\in \mathcal{F}'$ is of the form $T = S + \lambda 1$, where $S\in \A$ and $\lambda\in \mathbb{C}$ is fixed. 
Then $\mathcal{F}'' = \mathcal{F}' - \lambda 1$ is an uncountable C*-polyhedron in $\tilde{\A}$ with elements in $\A$.
\end{proof}

In particular, by Lemma \ref{unitization}, makes sense to talk about C*-polyhedron or \cc in nonunital C*-algebra. 

\begin{prop}\label{polyhedron_finite_dimension}Fix $H$ a Hilbert space and let $K(H)$ be the C*-algebra of all compact operators on $H$. Let $\A = K_1(H)$ be the C*-algebra generated by $K(H)$ and the identity 1 of $B(H)$. Then $\A$ has an uncountable C*-polyhedron (\cc)  if and only if $\A$ has an uncountable C*-polyhedron (\cc)  formed by finite rank operators. Moreover, we can assume that there is $n\in \mathbb{N}$, such that every operator has rank equal to $n$.
\end{prop}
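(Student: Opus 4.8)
The forward implication is immediate, since a C*-polyhedron of finite rank operators is in particular a C*-polyhedron. For the converse I plan the following. When $H$ is finite-dimensional, $\A$ is separable and so admits no uncountable C*-polyhedron by Lemma \ref{upperbound}, making the statement vacuous; hence I may assume $H$ is infinite-dimensional, in which case $1\notin K(H)$ and $\A = K_1(H)$ is exactly the unitization $\widetilde{K(H)}$. I would first invoke Lemma \ref{unitization} to reduce to an uncountable C*-polyhedron $(a_\alpha)_{\alpha<\omega_1}$ all of whose elements lie in $K(H)$, i.e.\ are compact operators.

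Next, following the argument in the proof of Lemma \ref{replace_elements}, for each $\alpha$ I pick $n_\alpha$ with $0<\tfrac{1}{n_\alpha}<dist(a_\alpha, MCL(\{a_\beta:\beta\neq\alpha\}))$ and pass to an uncountable $\Gamma\subset\omega_1$ on which $n_\alpha$ is constant, equal to some $n$. Writing $2\varepsilon=\tfrac1n$, this yields a \emph{uniform} lower bound: for every $\alpha\in\Gamma$ and every C*-convex combination $\sum_i A_i^* a_{\beta_i} A_i$ with indices $\beta_i\neq\alpha$ in $\Gamma$ we have $\|a_\alpha-\sum_i A_i^* a_{\beta_i} A_i\|\geq 2\varepsilon$.

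The heart of the argument is then a \emph{simultaneous} replacement of every $a_\alpha$ by a finite rank operator. Since each $a_\alpha$ is compact, I choose a finite rank $b_\alpha$ with $\|a_\alpha-b_\alpha\|\leq\delta$ for a fixed $\delta<\varepsilon$. The key estimate is that whenever $\sum_i A_i^*A_i\leq 1$ and $\|c_i\|\leq\delta$, then $\|\sum_i A_i^* c_i A_i\|\leq\delta$; this follows from the operator Cauchy--Schwarz inequality, since for unit vectors $\xi,\eta$
$$\left|\left\langle \sum_i A_i^* c_i A_i\,\xi,\eta\right\rangle\right|\leq \delta\sum_i \|A_i\xi\|\,\|A_i\eta\|\leq \delta\,\Big\langle \textstyle\sum_i A_i^*A_i\,\xi,\xi\Big\rangle^{1/2}\Big\langle \textstyle\sum_i A_i^*A_i\,\eta,\eta\Big\rangle^{1/2}\leq \delta.$$
Applying this with $c_i=a_{\beta_i}-b_{\beta_i}$ gives, for each $\alpha\in\Gamma$,
$$\Big\|b_\alpha-\sum_i A_i^* b_{\beta_i} A_i\Big\|\geq \Big\|a_\alpha-\sum_i A_i^* a_{\beta_i} A_i\Big\|-\|a_\alpha-b_\alpha\|-\Big\|\sum_i A_i^*(a_{\beta_i}-b_{\beta_i})A_i\Big\|\geq 2\varepsilon-2\delta>0,$$
so $(b_\alpha)_{\alpha\in\Gamma}$ is again an uncountable C*-polyhedron, now of finite rank operators (the \cc case is identical, using $\sum_i A_i^*A_i\leq 1$ and permitting $0$ among the $a_{\beta_i}$). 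Finally, each $b_\alpha$ has some finite rank $r_\alpha\in\mathbb{N}$; as $\Gamma=\bigcup_{m\in\mathbb{N}}\{\alpha\in\Gamma:r_\alpha=m\}$ is uncountable, one value $n$ is attained uncountably often, and restricting to that index set produces an uncountable C*-polyhedron of operators all of rank exactly $n$.

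The main obstacle is justifying that the replacement can be carried out for all $\alpha$ at once rather than one at a time: Lemma \ref{replace_elements} handles only single replacements and cannot be iterated transfinitely. It is precisely the uniform distance bound on $\Gamma$ combined with the estimate $\|\sum_i A_i^* c_i A_i\|\leq\sup_i\|c_i\|$ that upgrades single replacement to a simultaneous one, and this is the step demanding the most care.
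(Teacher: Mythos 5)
Your proposal is correct, and its outline coincides with the paper's own proof: reduce to a C*-polyhedron of compact operators via Lemma \ref{unitization}, then use density of the finite rank operators in $K(H)$ to replace each element. The difference lies entirely in the second step, and it works in your favor. The paper's proof simply cites Lemma \ref{replace_elements}, but that lemma, as stated and as proved, only licenses replacing \emph{one} element of the family by a nearby element; the proposition requires replacing all $\omega_1$ elements at once, and transfinite iteration of the single-replacement lemma does not obviously terminate (each application produces a new uncountable subset and a new $\varepsilon$, with no control at limit stages). You identify exactly this issue and close it: the uniform lower bound $2\varepsilon$ on $\Gamma$ (the same stabilization performed inside the proof of Lemma \ref{replace_elements}) together with your estimate $\bigl\|\sum_i A_i^* c_i A_i\bigr\| \leq \sup_i \|c_i\|$ whenever $\sum_i A_i^* A_i \leq 1$ (a correct Cauchy--Schwarz computation) upgrades single replacement to simultaneous replacement, which is what the proposition actually needs. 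You also handle explicitly two points the paper leaves implicit: the degenerate finite-dimensional case (vacuous by Lemma \ref{upperbound}), and the ``moreover'' clause, obtained by pigeonholing the ranks to a single value $n$ and using that subfamilies of C*-polyhedra are C*-polyhedra. In short, this is the same route as the paper, but your version supplies a step the paper's proof glosses over; the same estimate is in fact needed to justify the parallel simultaneous uses of Lemma \ref{replace_elements} in Theorem \ref{non_exist_compact} and Proposition \ref{bh_non_existence}, so it would be worth recording as a lemma in its own right.
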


\begin{proof}
Suppose $\A$ has an uncountable C*-polyhedron (\cc). By Lemma \ref{unitization}, we can assume that $\A$ has an uncountable C*-polyhedron (\cc) of compact operators.
Since finite rank operators are dense in $K(H)$, the result follows from  Lemma \ref{replace_elements}.

\end{proof}

\begin{defn}Let $H$ be a Hilbert space. Given $v, w\in H$, denote by $\omega_v$ the vector state on $H$ defined by $\omega_v(x) = \langle x, x\rangle$ and denote by $v\otimes w$ the one-dimensional projection defined by $v\otimes w (x) = \langle x, v\rangle w$.

\end{defn}

\begin{thm}\label{non_exist_compact}Let $\A = K(H)$. Then $\A$ does not have an uncountable \cc.

\end{thm}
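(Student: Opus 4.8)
The plan is to argue by contradiction, combining the structural reductions already available with a $\Delta$-system/pigeonhole argument and an explicit conjugation by a finite-rank partial isometry, the point being that a C*-absolutely convex hull is closed under exactly such conjugations.

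First I would reduce to a very concrete situation. Assuming $K(H)$ has an uncountable \cc, Lemma~\ref{unitization} lets me work inside $\A = K_1(H)$, and Proposition~\ref{polyhedron_finite_dimension} produces a bounded uncountable \cc $(a_\alpha)_{\alpha<\omega_1}$ whose members are all finite-rank operators of one fixed rank $n$. From the proof of Lemma~\ref{replace_elements} I would also fix, once and for all, an uncountable set $\Gamma\subset\omega_1$ and an $\varepsilon>0$ with $dist(a_\alpha, MCL(\{a_\beta : \beta\in\Gamma,\ \beta\neq\alpha\}\cup\{0\})) > 2\varepsilon$ for every $\alpha\in\Gamma$; the whole point will be to violate this.

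Next I would discretize. Fixing an orthonormal basis $(\xi_i)_{i\in I}$ of $H$ and using the singular value decomposition $a_\alpha = \sum_{k=1}^n s_k^\alpha\, e_k^\alpha\otimes f_k^\alpha$, I truncate each singular vector to finitely many basis coordinates to obtain, for each $\alpha\in\Gamma$, an operator $\tilde a_\alpha$ with $\|a_\alpha - \tilde a_\alpha\|<\varepsilon/3$ that is supported on a finite set $J_\alpha\subset I$, in the sense that $\tilde a_\alpha = P_\alpha \tilde a_\alpha P_\alpha$ for $P_\alpha$ the projection onto $\mathrm{span}\{\xi_i : i\in J_\alpha\}$. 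Passing to an uncountable subset of $\Gamma$, I may assume $|J_\alpha| = N$ is constant. Now comes the combinatorial heart: applying the $\Delta$-system lemma to the finite sets $(J_\alpha)_{\alpha\in\Gamma}$ yields an uncountable subfamily and a common root $R$ with $J_\alpha\cap J_\beta = R$ for distinct indices. Writing each $\tilde a_\alpha$ as an $N\times N$ matrix in the basis $\{\xi_i : i\in J_\alpha\}$, ordered so that $R$ comes first in a fixed order and $J_\alpha\setminus R$ follows in some chosen order, these matrices form a bounded subset of the $N\times N$ matrices (bounded since $\|\tilde a_\alpha\|\leq\|a_\alpha\|+\varepsilon/3$); by total boundedness I can extract two indices $\alpha_0\neq\alpha_1$ whose matrices agree to within $\varepsilon/3$ in operator norm.

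Finally I would conjugate. Let $\tau : J_{\alpha_0}\to J_{\alpha_1}$ be the bijection fixing $R$ and matching the chosen orderings of $J_{\alpha_0}\setminus R$ and $J_{\alpha_1}\setminus R$, and let $W = \sum_{i\in J_{\alpha_0}} \xi_i\otimes\xi_{\tau(i)}$ be the associated finite-rank partial isometry, so that $W\,\xi_i=\xi_{\tau(i)}$; then $W\in K(H)\subset\A$ and $W^*W\leq 1$. A direct computation of matrix entries gives $\langle W^*\tilde a_{\alpha_1}W\,\xi_k,\xi_l\rangle = \langle \tilde a_{\alpha_1}\xi_{\tau(k)},\xi_{\tau(l)}\rangle$, so $W^*\tilde a_{\alpha_1}W$ is exactly the $\tau$-relabelling of $\tilde a_{\alpha_1}$ and $\|W^*\tilde a_{\alpha_1}W - \tilde a_{\alpha_0}\|<\varepsilon/3$. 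The triangle inequality then yields $\|W^* a_{\alpha_1} W - a_{\alpha_0}\|<\varepsilon$. Since $W^*W\leq 1$ we have $W^* a_{\alpha_1} W\in MCL(\{a_{\alpha_1}\}\cup\{0\})\subset MCL(\{a_\beta : \beta\in\Gamma,\ \beta\neq\alpha_0\}\cup\{0\})$, whence $dist(a_{\alpha_0}, MCL(\{a_\beta:\beta\in\Gamma,\ \beta\neq\alpha_0\}\cup\{0\}))<\varepsilon$, contradicting the choice of $\varepsilon$.

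I expect the combinatorial heart of the third paragraph to be the main obstacle: one must simultaneously control the supports (handled by the $\Delta$-system lemma) and the actual matrix entries (handled by total boundedness of a bounded family of $N\times N$ matrices), and then be sure the root-preserving relabelling of coordinates genuinely extends to a partial isometry lying in $K(H)$. The nonseparability of $H$ is exactly what forces the use of the $\Delta$-system lemma rather than a naive separability argument, since the supports of the $a_\alpha$ need not lie in a common separable subspace.
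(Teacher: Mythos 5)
Your proof is correct, and it runs on the same engine as the paper's: produce a partial isometry $W$ with $W^*W\leq 1$ that (nearly) intertwines two members of the family, observe that $W^*a_{\alpha_1}W \in MCL(\{a_{\alpha_1}\}\cup\{0\})$ by absolute C*-convexity, and contradict the separation. But the combinatorial step that produces the matchable pair is genuinely different. The paper writes each $a_\alpha=\sum_{j=1}^d\lambda_{\alpha,j}\,f_{\alpha,j}\otimes f_{\alpha,j}$, aligns the frames $(f_{\alpha,j})_j$ and $(f_{\beta_n,j})_j$ \emph{exactly} via $T_n=\sum_j f_{\alpha,j}\otimes f_{\beta_n,j}$, and pigeonholes only the eigenvalue tuples: an uncountable subset of $\mathbb{C}^d$ has a point lying in the closure of the remaining ones, so $T_n^*a_{\beta_n}T_n\to a_\alpha$ and no uniform separation constant is needed. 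You instead truncate to finite basis supports and pigeonhole the entire $N\times N$ matrix; this forces you to fix the uniform $2\varepsilon$ from Lemma \ref{replace_elements} in advance, but it only requires finding \emph{two} close members rather than a convergent sequence. Two remarks. First, your route is more robust: the paper's decomposition presumes each $a_\alpha$ is normal (orthonormal eigenvectors), whereas a general finite-rank operator only admits an SVD $\sum_k s_k\, e_k\otimes f_k$ with distinct left and right frames; then matching singular values alone is not enough, since one must also control the relative position of the two frames, and your full-matrix pigeonhole captures exactly that data. Second, the $\Delta$-system lemma (and the root-fixing requirement on $\tau$) is superfluous: for \emph{any} two indices with close matrices, the partial isometry built from any order-compatible bijection $J_{\alpha_0}\to J_{\alpha_1}$ already gives $\|W^*\tilde a_{\alpha_1}W-\tilde a_{\alpha_0}\|<\varepsilon/3$, whether or not it fixes the overlap, because both operators are supported on $\mathrm{span}\{\xi_i : i\in J_{\alpha_0}\}$ and have matrices there that agree up to $\varepsilon/3$; so closeness of matrices is all that is used, and your closing claim that nonseparability ``forces'' the $\Delta$-system argument is not accurate, though including it is harmless.
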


\begin{proof}Suppose $\A$ has an uncountable \cc and lets get a contradiction. Fix $(a_\alpha)_{\alpha<\omega_1}$ an uncountable \cc.

By Lemma \ref{polyhedron_finite_dimension}, we can assume that each $a_\alpha$ is finite-dimensional, with dimensional equals to some $d$. Let's write 
$$a_\alpha = \sum_{j=1}^d \lambda_{\alpha, j} f_{\alpha, j}\otimes f_{\alpha, j}$$
where $(f_{\alpha, j})_{1\leq j \leq d}$ is an orthonormal basis for the range of $a_\alpha$ and $f_{\alpha, j}\otimes f_{\alpha, j}$ is the rank-one projection onto $\overline{span\{f_{\alpha, j}\}}$.

Since  $\{(\lambda_{\alpha, 1}, \cdots \lambda_{\alpha, d}): \alpha<\omega_1\}$ is an uncountable subset of $\mathbb{C}^d$, there is $\alpha$ and a sequence of index $(\beta_n)_{n\in \mathbb{N}}$ such that $\beta_n \neq \alpha$ for each $n\in \mathbb{N}$ and \footnote{Otherwise, we can find an uncountable discrete set in $\mathbb{C}^n$.}
$$(\lambda_{\alpha, 1}, \cdots \lambda_{\alpha, d})\in \overline{\{(\lambda_{\beta_n, 1}, \cdots \lambda_{\beta_n, d}): n\in \mathbb{N}\}}.$$

For each $n \in \mathbb{N}$, consider the compact operator $T_n \in \A$ defined by 
$$T_n = \sum_{j=1}^d f_{\alpha, j} \otimes f_{\beta_n, j}.$$
Then 
${T^*_n = \sum_{j=1}^d f_{\beta_n, j} \otimes f_{\alpha, j}}$
and $T^*_n T_n\leq I$.

%\begin{align*}
%& T^*_mT_m(x) & = &T^*_m(\sum_{j=1}^d \langle x, f_{\alpha, j} \rangle f_{\beta_m, j}) \\
%& &=&\sum_{j=1}^n \langle x, f_{\alpha, j} \rangle T^*_m(f_{\beta_m, j}) \\
%& & = &  \sum_{j=1}^d \langle x, f_{\alpha, j} \rangle \sum_{i=1}^d \langle f_{\beta_m, j}, f_{\beta_m, i} \rangle f_{\alpha, i} \\
%& &=& \sum_{j=1}^d \langle x, f_{\alpha, j}\rangle f_{\alpha, j} \\
%& & =& P_{{spam\{f_\alpha, j}, 1\leq j\leq n\}}(x) 
%\end{align*}
Let us prove that $(T^*_n a_{\beta_n}T_n)_{n\in \mathbb{N}}$ converges to $a_\alpha$. 
Fix $\varepsilon>0$ and let $n\in \mathbb{N}$ such that

$$\max_{1\leq i\leq d} |\lambda_{\alpha, i} - \lambda_{\beta_n, i}|<\frac{\varepsilon}{d}$$
Observe that
%\begin{align*}
%& T^*_m a_{\beta_m} T_m  & =  (\sum_{j=1}^d f_{\beta_n, j} \otimes f_{\alpha, j})(\sum_{j=1}^d \lambda_{\beta, j} f_{\beta, j}%\otimes f_{\beta, j})(\sum_{j=1}^d f_{\alpha, j} \otimes f_{\beta_n, j})\\
%& & =    \sum_{j=1}^d \lambda_{\beta_m, j} f_{\alpha, j}\otimes f_{\alpha, j}
% \end{align*}
 \begin{align*}
T^*_n a_{\beta_n} T_n (x) & =   T^*_n a_{\beta_n} (\sum_{j=1}^d \langle x, f_{\alpha, j} \rangle f_{\beta_n, j})  \\
							&= T^*_n (\sum_{j=1}^d \langle x, f_{\alpha, j} \rangle a_{\beta_n} (f_{\beta_n, j})) \\
 							& =T^*_n (\sum_{j=1}^d \langle x, f_{\alpha, j} \rangle \lambda_{\beta_n, j} f_{\beta_n, j}) \\ 
							& =(\sum_{j=1}^d \langle x, f_{\alpha, j} \rangle \lambda_{\beta_n, j} T^*_n(f_{\beta_n, j})\\
							& = \sum_{j=1}^d \langle x, f_{\alpha, j} \rangle \lambda_{\beta_n, j} \langle f_{\beta_n, j}, f_{\beta_n, j} \rangle f_{\alpha, j}\\
							& = \sum_{j=1}^d \lambda_{\beta_n, j} \langle x, f_{\alpha, j} \rangle f_{\alpha, j}\\
							& = \sum_{j=1}^d \lambda_{\beta_n, j} f_{\alpha, j}\otimes f_{\alpha, j}(x)
\end{align*}
Then
\begin{align*}
\|(a_{\alpha} - T^*_n a_{\beta_n} T_n)\| & =  \|\sum_{j=1}^d \lambda_{\alpha, j} f_{\alpha, j}\otimes f_{\alpha, j} - \sum_{j=1}^d \lambda_{\beta_n, j} f_{\alpha, j}\otimes f_{\alpha, j})\|\\
 & = \|\sum_{j=1}^d (\lambda_{\alpha, j} - \lambda_{\beta_n, j}) f_{\alpha, j}\otimes f_{\alpha, j} \|\\
 &\leq   \sum_{j=1}^d |\lambda_{\alpha, j} - \lambda_{\beta_n, j}|\| f_{\alpha, j}\otimes f_{\alpha, j}\|\\
 &\leq \sum_{j=1}^d|\lambda_{\alpha, j} - \lambda_{\beta_n, j}| \\
 & \leq  d \max_{1\leq i\leq d} |\lambda_{\alpha, i} - \lambda_{\beta_n, i}|<\varepsilon
\end{align*}

Since $\varepsilon$ was arbitrary, we have proved that  $(T^*_n a_{\beta_n} T_n)_n$ converges to $a_\alpha$. 
To conclude, observe that $(T^*_n a_{\beta_n} T_n)_n$  is a sequence of elements in $MCL(\{a_{\beta_n}:n\geq 1\}\cup \{0\})\subset MCL(\{a_\beta: \beta \neq \alpha\}\cup\{0\})$. In particular
$$a_\alpha \in MCL(\{a_\beta: \beta \neq \alpha\}\cup\{0\})$$
which is a contradiction with the fact that $(a_\alpha)_{\alpha<\omega_1}$ is a \cc.
\end{proof}

\begin{cor}For every infinite cardinal $\kappa$, there is a C*-algebra $\A$ of density $\kappa$ such that $\A$ has only countable \cc.
\end{cor}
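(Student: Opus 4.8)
The plan is to exhibit, for each infinite cardinal $\kappa$, the algebra of compact operators on a Hilbert space of the appropriate dimension and then to invoke Theorem \ref{non_exist_compact} directly. Concretely, I would fix a Hilbert space $H$ with $\dim H = \kappa$ (for instance $H = \ell_2(\kappa)$), choose an orthonormal basis $(e_i)_{i<\kappa}$, and set $\A = K(H)$. By Theorem \ref{non_exist_compact} the algebra $\A$ admits no uncountable \cc, so every \cc of $\A$ is automatically countable. The entire content left to verify is therefore the single cardinal equality $d(\A) = \kappa$.

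For the upper bound $d(\A) \leq \kappa$, I would use that the finite rank operators are norm-dense in $K(H)$, together with the fact that every finite rank operator is a norm-limit of operators of the form $\sum_{i,j \in F} c_{i,j}\, e_i \otimes e_j$, where $F \subset \kappa$ is finite and each coefficient $c_{i,j}$ is a Gaussian rational. The collection of all such operators is dense in $\A$, and its cardinality is $|[\kappa]^{<\omega}| \cdot \aleph_0 = \kappa$, where I use the standard fact that $[\kappa]^{<\omega}$ has size $\kappa$ and that $\kappa \cdot \aleph_0 = \kappa$ for infinite $\kappa$. Hence $d(\A) \leq \kappa$.

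For the lower bound $d(\A) \geq \kappa$, I would note that the rank-one projections $p_i = e_i \otimes e_i$ have pairwise orthogonal ranges, so that $\|p_i - p_j\| = 1$ whenever $i \neq j$. This yields a $1$-separated family of size $\kappa$ inside $\A$, which forces $d(\A) \geq \kappa$. Combining the two estimates gives $d(\A) = \kappa$, and the corollary follows at once from Theorem \ref{non_exist_compact}.

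Since every step is elementary, I do not expect a genuine obstacle here; the one point deserving a small amount of care is the density computation, and specifically the verification that the finite rank operators with Gaussian-rational coefficients are both dense in $\A$ and exactly $\kappa$ in number.
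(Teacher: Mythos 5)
Your proposal is correct and follows exactly the paper's proof: the paper also takes $\A = K(\ell_2(\kappa))$ and cites Theorem \ref{non_exist_compact}, merely asserting $d(\A)=\kappa$ without proof. Your explicit verification of the density (Gaussian-rational finite-rank operators for the upper bound, the $1$-separated family of rank-one projections for the lower bound) is a sound filling-in of that asserted fact.
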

\begin{proof}
For every infinite cardinal $\kappa$, let $\A = K(\ell_2(\kappa))$. Then $\A$ has density $\kappa$ and by Theorem \ref{non_exist_compact}, $\A$ does not have uncountable \cc.
\end{proof}

The following remark is a counter-example to Question \ref{question_existence} and Question \ref{poly_c_poly} :

\begin{rem}Let $H = \ell_2(\omega_1)$ and $\A = K(H)$. Then $\A$ is a nonseparable C*-algebra ($d(\A) = \omega_1$) and by Theorem \ref{non_exist_compact}, $\A$ does not have an uncountable \cc. In particular, $\A$ is an example of a nonseparable C*-algebra without uncountable \cc. On the other hand, consider $(e_\alpha)_{\alpha<\omega_1}$ an orthogonal basis for $H$ and $(P_\alpha)_{\alpha<\omega_1}$ the corresponding rank-one projections. Finally, define $\tau_\alpha$ to be the vector state associated with $e_\alpha$. Then $(P_\alpha, \tau_\alpha)_{\alpha<\omega_1}$ is a UBABS on $\A$ and by \cite[Proposition 2.2]{granero2003kunen},  we conclude that $(P_\alpha)_{\alpha<\omega_1}$ is an uncountable polyhedron in $\A$. 

\end{rem}

%Now we turn our attention to $B(H)$:

%Facts: 
%\begin{enumerate}
%\item Self-adjoint elements with finite spectrum are dense in the set of all self-adjoint elements in $B(H)$.
%(C-convexity and matricial ranges - Farenick Lemma 1)
%\item Let $T$ be a self-adjoint element with finite spectrum. Then every element in the spectrum is an eigenvalue %and $H$ has an orthonormal basis consisting of eigenvectors of $T$.
%\end{enumerate}

% https://math.stackexchange.com/questions/2285258/equivalent-condition-of-unitary-equivalence-of-projections-in-a-hilbert-space

\begin{prop}\label{bh_non_compact}Let $\A = B(H)$. Then $\A$ does not have an uncountable C*-polyhedron of compact operators.

\end{prop}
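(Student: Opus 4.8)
The plan is to run the strategy of Theorem \ref{non_exist_compact} inside $B(H)$, the crucial new feature being that $B(H)$ contains unitaries extending the finite-rank partial isometries used there. This lets the approximating elements lie in the C*-convex hull in the \emph{strict} sense (the operator $V$ implementing the combination satisfies $V^*V=I$, not merely $V^*V\le 1$), which is exactly what a C*-polyhedron requires, and it simultaneously lets us treat arbitrary compact operators rather than only normal ones.

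First I would set up the reductions. We may assume $H$ is infinite dimensional, since otherwise $B(H)\cong M_n(\C)$ is separable and Lemma \ref{upperbound} forbids an uncountable C*-polyhedron. Suppose toward a contradiction that $(a_\alpha)_{\alpha<\omega_1}\subset B(H)$ is an uncountable C*-polyhedron of compact operators. Using the lemma that produces a bounded uncountable subpolyhedron, we may assume $\|a_\alpha\|\le M$ for all $\alpha$. Since the finite-rank operators are norm dense in $K(H)$, Lemma \ref{replace_elements} applies verbatim in $B(H)$ (as in the reduction of Proposition \ref{polyhedron_finite_dimension}), so after passing to an uncountable subfamily we may assume each $a_\alpha$ has finite rank, and by pigeonhole that all ranks equal a fixed $d$. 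For each $\alpha$ set $W_\alpha=\mathrm{ran}(a_\alpha)+\mathrm{ran}(a_\alpha^*)$; this is a subspace of dimension $m_\alpha\le 2d$, and since $W_\alpha^{\perp}=\ker a_\alpha\cap\ker a_\alpha^*$, the operator $a_\alpha$ is supported on $W_\alpha$ (it vanishes on $W_\alpha^{\perp}$ and maps into $W_\alpha$). Pigeonholing once more, assume $m_\alpha=m$ is constant. Fix an orthonormal basis $\{e^\alpha_1,\dots,e^\alpha_m\}$ of $W_\alpha$ and encode $a_\alpha$ by the matrix $M_\alpha=[\langle a_\alpha e^\alpha_j,e^\alpha_i\rangle]\in M_m(\C)$, noting $\|M_\alpha\|=\|a_\alpha\|\le M$.

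The next step is an accumulation argument, mirroring the one in Theorem \ref{non_exist_compact} but now in $M_m(\C)$ rather than $\C^d$. The uncountable bounded family $(M_\alpha)_{\alpha<\omega_1}$ lives in a second-countable metric space, so all but countably many of its points are condensation points; hence there is an $\alpha$ and a sequence $(\beta_n)_n$ with $\beta_n\neq\alpha$ and
$$M_{\beta_n}\longrightarrow M_\alpha \quad\text{in } M_m(\C)$$
(otherwise one would obtain an uncountable discrete subset of $M_m(\C)$, which is impossible). Now, because $H$ is infinite dimensional, $W_\alpha^{\perp}$ and $W_{\beta_n}^{\perp}$ both have infinite dimension, so there is a unitary $V_n\in B(H)$ with $V_n e^\alpha_i=e^{\beta_n}_i$ for $1\le i\le m$ and $V_n(W_\alpha^{\perp})=W_{\beta_n}^{\perp}$. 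Since $a_{\beta_n}$ is supported on $W_{\beta_n}$, the operator $V_n^* a_{\beta_n} V_n$ is supported on $W_\alpha$ with matrix $M_{\beta_n}$ in the basis $\{e^\alpha_i\}$, while $a_\alpha$ has matrix $M_\alpha$ in the same basis; consequently
$$\|V_n^* a_{\beta_n} V_n - a_\alpha\| = \|M_{\beta_n}-M_\alpha\|\longrightarrow 0.$$
As each $V_n$ is unitary, $V_n^*V_n=I$, so every $V_n^* a_{\beta_n} V_n$ is a (one-term) C*-convex combination of $a_{\beta_n}\in\{a_\beta:\beta\neq\alpha\}$ and therefore lies in $MCL(\{a_\beta:\beta\neq\alpha\})$. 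Since this set is norm closed (Lemma \ref{closure_convex}), passing to the limit gives $a_\alpha\in MCL(\{a_\beta:\beta\neq\alpha\})$, contradicting that $(a_\alpha)_{\alpha<\omega_1}$ is a C*-polyhedron.

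The main obstacle is precisely the passage from the \cc (absolutely convex) setting of Theorem \ref{non_exist_compact} to the strict C*-polyhedron constraint $\sum A_i^*A_i=1$: there one is free to pad with a term at $0$ and use a partial isometry, whereas here no $0$ is available and the combination must sum exactly to $I$ using only the $a_\beta$ with $\beta\neq\alpha$. The resolution is to absorb the entire weight into a single unitary conjugation, which is legitimate only because $B(H)$ (unlike $K(H)$) contains unitaries extending a finite-rank frame isomorphism; the use of the support space $W_\alpha$ instead of an eigenbasis is what makes this work for general, possibly non-normal, compact operators.
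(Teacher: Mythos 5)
Your proof is correct, but it takes a genuinely different route from the paper. The paper disposes of this proposition in a few lines: by \cite[Lemma 6]{loebl1981some}, every C*-convex subset of $B(H)$ containing a compact operator automatically contains $0$, so for a family of compact operators $MCL(\{a_\beta:\beta\neq\alpha\}) = MCL(\{a_\beta:\beta\neq\alpha\}\cup\{0\})$; hence an uncountable C*-polyhedron of compact operators in $B(H)$ would already be a \cc in $B(H)$, and a fortiori in $K(H)$ (the hull computed in the smaller algebra is contained in the one computed in $B(H)$), contradicting Theorem \ref{non_exist_compact}. You instead re-run the accumulation argument of Theorem \ref{non_exist_compact} directly in $B(H)$, replacing the partial isometries $T_n$ used there (which exploit the slack $\sum_i A_i^*A_i\le 1$ and the padding by $0$) with honest unitaries $V_n$, available precisely because the complements $W_\alpha^{\perp}$, $W_{\beta_n}^{\perp}$ of the finite-dimensional supports are infinite-dimensional, so that each approximant $V_n^*a_{\beta_n}V_n$ is a one-term \emph{strict} C*-convex combination. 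Both arguments are valid; the paper's is much shorter but hinges on the external Lemma 6 plus the $K(H)$ theorem, while yours is self-contained at the level of the reduction lemmas and, as a bonus, your use of the support space $W_\alpha=\mathrm{ran}(a_\alpha)+\mathrm{ran}(a_\alpha^*)$ with a full matrix encoding handles arbitrary finite-rank operators, whereas the decomposition $a_\alpha=\sum_j\lambda_{\alpha,j}\,f_{\alpha,j}\otimes f_{\alpha,j}$ in the paper's proof of Theorem \ref{non_exist_compact} is literally valid only for normal operators. One shared imprecision to flag: like the paper itself in Proposition \ref{polyhedron_finite_dimension}, you invoke Lemma \ref{replace_elements} to replace \emph{all} elements by finite-rank approximants, although that lemma as stated replaces a single element; the simultaneous version is routine (the same $\varepsilon/2$-estimate works uniformly since $\|\sum_i A_i^*c_iA_i\|\le\max_i\|c_i\|$ when $\sum_i A_i^*A_i\le 1$), but it should be said.
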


\begin{proof}
Suppose $(a_\alpha)_{\alpha<\omega_1}$ is an uncountable C*-polyhedron of compact operators in $B(H)$. By \cite[Lemma 6]{loebl1981some}, every C*-convex set in $B(H)$ which contains a compact operator, necessarily contains $0$. In particular, for every $\alpha<\omega_1$
$$MCL(\{a_\beta: \beta\neq \alpha\}) = MCL(\{a_\beta:\beta\neq \alpha\}\cup \{0\})$$
and therefore, $(a_\alpha)_{\alpha<\omega_1}$  would be a \cc in $B(H)$, hence, in $K(H)$, which is a contradiction with Theorem \ref{non_exist_compact}.
\end{proof}

\begin{prop}\label{bh_non_existence}$B(\ell_2)$ has no uncountable C*-polyhedron of self-adjoint elements.
\end{prop}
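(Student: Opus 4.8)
The plan is to argue by contradiction: suppose $(a_\alpha)_{\alpha<\omega_1}$ is an uncountable C*-polyhedron of self-adjoint operators in $B(\ell_2)$, and derive a contradiction by finding, for some fixed $\alpha$, a sequence of contractions witnessing $a_\alpha \in MCL(\{a_\beta : \beta\neq\alpha\})$. By the boundedness reduction and Lemma \ref{replace_elements}, I would first pass to a bounded uncountable subfamily so that all $a_\alpha$ lie in a fixed operator-norm ball, say $\|a_\alpha\|\leq 1$. The key structural fact to exploit is that, unlike $K(\ell_2)$, the algebra $B(\ell_2)$ is ``large'' enough to contain many partial isometries; conjugation by suitable contractions can move the essential part of one self-adjoint operator onto another. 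The main tool I expect to need is the \emph{separable} nature of the situation in a weaker topology: a bounded set of self-adjoint operators sits inside a weak*-compact (hence weak*-metrizable on bounded sets, since $\ell_2$ is separable and the predual $B(\ell_2)_* $ is separable) set, so from $\omega_1$ many $a_\alpha$ I can extract a weak*-convergent (indeed a convergent-in-a-metrizable-sense) subsequence.

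The core of the argument, as I see it, is to diagonalize simultaneously over the spectral data. Since $\ell_2$ is separable, the unit ball of $B(\ell_2)$ with the weak operator topology (or the strong* topology on bounded sets) is metrizable. Thus among the uncountably many bounded self-adjoint $a_\alpha$ there must be a fixed $\alpha$ and a sequence $(\beta_n)_n$ with $\beta_n\neq\alpha$ such that $a_{\beta_n}\to a_\alpha$ in this metrizable topology. The plan is then to manufacture contractions $T_n\in B(\ell_2)$ with $T_n^*T_n\leq I$ such that $T_n^* a_{\beta_n} T_n \to a_\alpha$ \emph{in norm}; mimicking the construction in the proof of Theorem \ref{non_exist_compact}, where one builds $T_n$ from partial isometries matching orthonormal systems adapted to the spectral decompositions of $a_\alpha$ and $a_{\beta_n}$. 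Concretely, for a self-adjoint operator I would use its spectral projections on a fine partition of $[-1,1]$ to approximate it, up to small norm error, by a finite-rank-spectrum operator, and then transport the corresponding eigenspaces of $a_{\beta_n}$ onto those of $a_\alpha$ by an isometry, so that conjugation reproduces $a_\alpha$ up to the chosen error. This would place $a_\alpha$ in $\overline{MCL}(\{a_{\beta_n}:n\}) \subset MCL(\{a_\beta:\beta\neq\alpha\})$, contradicting the C*-polyhedron property.

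The step I expect to be the main obstacle is upgrading weak*/weak-operator convergence of $a_{\beta_n}$ to $a_\alpha$ into a genuine \emph{norm} approximation by conjugates $T_n^* a_{\beta_n} T_n$. In the compact case (Theorem \ref{non_exist_compact}) this was clean because finite-rank operators have finitely many nonzero eigenvalues and one controls everything with finitely many scalar differences $|\lambda_{\alpha,j}-\lambda_{\beta_n,j}|$; for general self-adjoint operators in $B(\ell_2)$ the spectrum is a full subset of $[-1,1]$ and one must argue that the continuous spectral parts can also be matched up to arbitrarily small norm error. The likely resolution is a two-step estimate: first truncate each $a_\alpha$ to a ``spectrally discretized'' self-adjoint operator $\hat a_\alpha$ with finitely many eigenvalues (spectral-projection coarsening), incurring a uniformly small norm error; second, on the discretized level, apply the eigenspace-transport construction exactly as in Theorem \ref{non_exist_compact}, now using that the discretized spectra lie in a fixed finite-dimensional simplex of values so that some subsequence has matching data. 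Making the dependence of the isometries on $n$ compatible with the simultaneous norm control is the delicate bookkeeping, but it is the natural self-adjoint analogue of the compact-operator proof and should go through.
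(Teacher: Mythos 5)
Your closing ``two-step resolution'' --- replace each $a_\alpha$ by a self-adjoint operator with finite spectrum via Lemma \ref{replace_elements} (using density of finite-spectrum operators in the self-adjoint part of $B(\ell_2)$), then transport eigenspaces by partial isometries as in Theorem \ref{non_exist_compact} --- is precisely the paper's proof. However, the selection mechanism you place at the center of the plan, weak* (or WOT) metrizability of the bounded sets of $B(\ell_2)$, is a step that would fail. The norm estimate $\|a_\alpha - T_n^* a_{\beta_n} T_n\|<\varepsilon$ produced by the transport construction is controlled entirely by $\max_i |\lambda_{\alpha,i}-\lambda_{\beta_n,i}|$, and weak* convergence $a_{\beta_n}\to a_\alpha$ gives no control whatsoever on spectra: the rank-one projections $e_n\otimes e_n$ converge to $0$ in WOT while their spectra remain $\{0,1\}$. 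Moreover, a weak*-convergent sequence extracted \emph{before} discretization loses all meaning after each term is perturbed to a finite-spectrum operator. The paper selects instead on the discretized spectral data themselves: after a pigeonhole making the number of eigenvalues a constant $d$, the eigenvalue tuples form an uncountable subset of $\C^d$, which must contain a point that is a limit of a sequence of the others (a condensation point); this is what yields $\alpha$ and $(\beta_n)_n$ with $(\lambda_{\beta_n,1},\ldots,\lambda_{\beta_n,d})\to(\lambda_{\alpha,1},\ldots,\lambda_{\alpha,d})$. Your phrase about the discretized spectra lying in a fixed finite-dimensional set of values gestures at this, but the mechanism you actually name cannot deliver it.

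There is a second, independent gap: the dimension bookkeeping for the eigenspaces. In Theorem \ref{non_exist_compact} every spectral projection has rank one, so the transporting isometries exist automatically; here the eigenspaces of $a_\alpha$ can have any dimension in $\{1,2,\ldots\}\cup\{\aleph_0\}$, and to define $A_i$ with $A_i^* P_{\beta_n,i} A_i = P_{\alpha,i}$ one needs $\dim \mathrm{Range}(P_{\alpha,i}) \leq \dim \mathrm{Range}(P_{\beta_n,i})$ for every $i$ simultaneously and every $n$. The paper secures this with one further counting argument: the tuples of eigenspace dimensions range over a countable set, so uncountably many $\alpha$ share the same tuple. Your proposal never addresses this; it is an easy fix, but a necessary one, and it must be built into the same pigeonhole that fixes $d$. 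Finally, note that your witnesses satisfy only $T_n^* T_n\leq I$, so the construction places $a_\alpha$ in $MCL(\{a_\beta:\beta\neq\alpha\}\cup\{0\})$ and hence contradicts only the \cc property rather than the C*-polyhedron property as you claim; this is in fact all the paper's own proof establishes, notwithstanding the wording of the proposition.
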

\begin{proof}
The proof has similar ideas with the proof of Theorem \ref{non_exist_compact}.
Suppose $B(\ell_2)$ has an uncountable \cc of self-adjoint elements and let's get a contradiction.

Suppose $(a_\alpha)_{\alpha<\omega_1}$ is a \cc. Since self-adjoint elements with finite spectrum are dense in the set of all self-adjoint elements in $B(\ell_2)$ (see ) we can assume (by Lemma \ref{replace_elements}) that each $a_\alpha$ is a self-adjoint element with finite spectrum.
In particular, we can write %\footnote{https://math.stackexchange.com/questions/3799698/self-adjoint-bounded-operator-with-finite-spectrum-implies-diagonalisable}
$$a_\alpha = \lambda_{\alpha, 1} P_{\alpha, 1} + \cdots \lambda_{\alpha, n} P_{\alpha, n_\alpha}$$
where $\lambda_{\alpha, 1}, \cdots, \lambda_{\alpha, n_\alpha}$ are eigenvalues of $a_\alpha$ and $(P_{\alpha, i})_{1\leq i\leq n_\alpha}$ are pairwise orthogonal projections.
By a counting argument, we can assume that $n_\alpha = d$ and  $dim(Range(P_{\alpha, i})) = dim(Range(P_{\beta, i}))$ for each $i$ and $\alpha, \beta<\omega_1$.

Since  $\{(\lambda_{\alpha, 1}, \cdots \lambda_{\alpha, d}): \alpha<\omega_1\}$ is an uncountable subset of $\mathbb{C}^d$, there is $\alpha$ and a sequence of index $(\beta_n)_{n\in \mathbb{N}}$ such that $\beta_n \neq \alpha$ for each $n\in \mathbb{N}$ and 
$$(\lambda_{\alpha, 1}, \cdots \lambda_{\alpha, d})\in \overline{\{(\lambda_{\beta_n, 1}, \cdots \lambda_{\beta_n, d}): n\in \mathbb{N}\}}.$$

Fix $\varepsilon>0$ and let $n\in \mathbb{N}$ such that
$$\max_{1\leq i\leq d} |\lambda_{\alpha, i} - \lambda_{\beta_n, i}|<\frac{\varepsilon}{d}$$
Let $(e_{\delta, i, j}: j\in \mathbb{N})$ be an orthonormal basis for $Range(P_{\delta, i})$  for $\delta=\alpha, \beta_n$ and define $A_{i}(e_{\alpha, i, j}) = e_{\beta_n, i, j}$.

If $A = \sum_{i=1}^d A_i$ then $A^* A\leq I$ and 
\begin{align*}
\|a_\alpha - A^* a_\beta A\|  & =  \|\sum_i^d \lambda_{\alpha, i} P_{\alpha, i} - A^* ( \sum_i^d \lambda_{\beta, i} P_{\beta, i} )A\|\\
                              & = \|\sum_i^d \lambda_{\alpha, i} P_{\alpha, i} - \sum_i^d \lambda_{\beta, i} A_i^* P_{\beta, i} A_i\| \\
                              & = \|\sum_i^d (\lambda_{\alpha, i}  -\lambda_{\beta, j} )P_{\alpha, i}\|\\
                              & \leq \sum_i^d |\lambda_{\alpha, i}  -\lambda_{\beta, j}|\\
                              &\leq \varepsilon
\end{align*}
This proves that $a_\alpha \in MCL(\{a_{\beta_n}:\}\cup\{0\})\subset MCL(\{a_\beta: \beta\neq \alpha\}\cup\{0\})$, which is a contradiction with the fact that $(a_\alpha)_{\alpha<\omega_1}$ is a \cc.
%\begin{lem}$K(H)$ has a noncommutative polyhedron of size $den(H)=den(K(H))$.
%\end{lem}
%\begin{proof}
%Let $(e_\xi)_{\xi<\kappa}$ be an orthonormal basis of $H$, and $P_\xi$ the projection onto $e_\xi$.
%\end{proof}
\end{proof}

%\bibliography{/home/giba/Dropbox/Estudos/Bibliografia/bibliografia.bib}{}  % HP
%\bibliography{/home/csh/Dropbox/Estudos/Bibliografia/bibliografia.bib}{}  % DELL % associado ao arquivo: 'bibliografia.bib'
%\bibliography{/home/clayton/Dropbox/Estudos/Bibliografia/bibliografia.bib} % Lenovo
\bibliographystyle{abbrv}

\end{document}